\newtheorem{thm}{Theorem}[section]
\newtheorem{cor}[thm]{Corollary}
\newtheorem{prop}[thm]{Proposition}
\newtheorem{lemma}[thm]{Lemma}
\newenvironment{dfn}{\medskip\refstepcounter{thm}
\noindent{\bf Definition \thesection.\arabic{thm}\ }}{\medskip}
\newenvironment{ex}{\medskip\refstepcounter{thm}
\noindent{\bf Example \thesection.\arabic{thm}\ }}{\medskip}
\newenvironment{proof}[1][,]{\medskip\ifcat,#1
\noindent{{\it Proof}:\ }\else\noindent{\it Proof of #1.\ }\fi}
{\hfill$\square$\medskip}
\newenvironment{remark}[1][Remark]{\begin{trivlist}
\item[\hskip \labelsep {\bfseries #1}]}{\end{trivlist}}
\newenvironment{remarks}[1][Remarks]{\begin{trivlist}
\item[\hskip \labelsep {\bfseries #1}]}{\end{trivlist}}
\newenvironment{note}[1][Note]{\begin{trivlist}
\item[\hskip \labelsep {\bfseries #1}]}{\end{trivlist}}
\newenvironment{ack}[1][Acknowledgements]{\begin{trivlist}
\item[\hskip \labelsep {\bfseries #1}]}{\end{trivlist}}
\def\O{{\mathbb O}}
\def\C{{\mathbb C}}
\def\Im{\mathop{\rm Im}\nolimits} 
\def\Re{\mathop{\rm Re}\nolimits} 
\def\R{{\mathbb R}}
\DeclareMathOperator\SU{SU}
\DeclareMathOperator\GG{G}
\def\bfe{{\mathbf e}}
\def\bff{{\mathbf f}}
\def\g2{{\mathfrak g}_2}
\def\bfx{{\mathbf x}}
\def\bfu{{\mathbf u}}
\def\bfv{{\mathbf v}}
\def\bft{{\mathbf t}}
\def\bfn{{\mathbf n}}
\def\bfb{{\mathbf b}}
\def\bfw{{\mathbf w}}
\def\P{{\mathbb P}}
\DeclareMathOperator\U{U}
\DeclareMathOperator\GL{GL}
\def\gg{{\rm g}}
\def\d{{\rm d}}
\def\w{\wedge}
\def\eq#1{{\rm(\ref{#1})}}
\DeclareMathOperator\SO{SO}
\def\Z{{\mathbb Z}}
\def\eps{{\varepsilon}}
\DeclareMathOperator\Sym{Sym}
\DeclareMathOperator\Skew{Skew}
\DeclareMathOperator\Tr{Tr}
\DeclareMathOperator\AAA{A}
\DeclareMathOperator\SSS{S}
\DeclareMathOperator\diag{diag}
\DeclareMathOperator\GGG{G}
\DeclareMathOperator\vol{vol}
\DeclareMathOperator\Gr{Gr}
\DeclareMathOperator\Id{Id}
\begin{document}

\title{Ruled Lagrangian Submanifolds of the 6-Sphere}
\author{Jason D. Lotay\\ {\normalsize MSRI, Berkeley}}

\date{}

\maketitle

\begin{center}
{\large\bf Abstract}
\end{center}

\noindent This article sets out to serve a dual purpose.  On the one hand, we give an explicit description of the 
Lagrangians in the nearly K\"ahler 6-sphere $\mathcal{S}^6$ which are \emph{ruled} by circles of constant radius using 
`\emph{Weierstrass formulae}'.  On the other, we recognise all 
 %previous 
known examples of these Lagrangians as being ruled by such circles.  Therefore, 
we describe all families of Lagrangians in $\mathcal{S}^6$ whose second fundamental form satisfies
natural pointwise conditions: so-called `\emph{second order families}'.

\section[Introduction]{Introduction}\label{s1}

\footnote{\textsl{2000 Mathematics Subject Classification: 53B20, 53B25.}}The 6-sphere $\mathcal{S}^6$ inherits a \emph{nearly K\"ahler} structure from its natural inclusion in the imaginary octonions.  It is
thus endowed with an almost symplectic structure, given by a nondegenerate 2-form $\omega$ which is not closed.  We may define \emph{Lagrangian} submanifolds of $\mathcal{S}^6$ as 3-dimensional submanifolds on which $\omega$ vanishes.  
Surprisingly, Lagrangians in $\mathcal{S}^6$ are minimal and real analytic.  They are also related to calibrated
 4-dimensional submanifolds of $\R^7$ known as \emph{coassociative} 4-folds and are analogues of \emph{special Lagrangian}
 submanifolds of $\C^3$.

%This article sets out to serve a dual purpose.  On the one hand, we give an explicit description of the 
%Lagrangians in $\mathcal{S}^6$ which are \emph{ruled} by circles of constant radius using `\emph{Weierstrass formulae}'.  
%On the other, we recognise all 
%previous known examples of these Lagrangians as being ruled by such circles.  Therefore, 
%we describe all families of Lagrangians in $\mathcal{S}^6$ whose second fundamental form satisfies
%natural pointwise conditions: so-called `\emph{second order families}'.

\subsection[Motivation]{Motivation}
 
 Lagrangian submanifolds of $\mathcal{S}^6$ are studied by many authors and several families of explicit examples are known.  
The homogeneous examples are classified in \cite{Mashimo} and the constant curvature examples in \cite{Ejiri}.  Some of 
these examples are given explicit descriptions in \cite{Dillen}.  
 Lagrangians satisfying certain curvature conditions are classified in \cite{Deszcz} and \cite{Dillen2}.  
%In \cite{Dillen2}, there is a classification 
%of Lagrangian submanifolds satisfying a curvature condition known as Chen's equality.  In a similar vein, the quasi-Einstein 
%examples are all explicitly described in \cite{Deszcz}.  A special case of a
%ruled Lagrangian is studied in \cite{Vrancken2}, where the Lagrangian admits a Killing vector field whose integral 
%curves are geodesic circles.  
  Ruled Lagrangian submanifolds in $\mathcal{S}^6$ 
are equivalent to coassociative cones in $\R^7$ ruled by 2-planes, 
which are studied in \cite{Fox2} and by the author in \cite{Lotay2R}.  A special family of ruled Lagrangians is analysed  
in \cite{Vrancken2}.
 
The study of second order families of submanifolds associated with special holonomy was begun by Bryant \cite{Bryant2} 
(for special Lagrangian 3-folds in $\C^3$) and continued in \cite{Fox} and \cite{Ionel}.   
  Bryant \cite[$\S$3]{Bryant2} classifies all special Lagrangian 3-folds whose second fundamental has a pointwise symmetry and
 gives a characterisation of ruled examples.   However, he does not explicitly describe the 
ruled family and remarks that there cannot be a Weierstrass formula for the general ruled special Lagrangian 3-fold.  
Ruled special Lagrangians in $\C^3$ are also studied in \cite{JoyceR}, and the analogous situation in $\R^7$, for 
 calibrated submanifolds called \emph{associative} 3-folds, 
is researched by the author in \cite{LotayR}.  %Bryant has studied second order families of associative 3-folds in $\R^7$ but, 
%at the time of writing, is yet to publish his findings.

The full classification of Lagrangians in $\mathcal{S}^6$ whose second fundamental form has a nontrivial stabilizer is
 given in \cite{Vrancken}, though the exposition is rather brief.  
Here we give more detail and a new perspective on this survey.  
%We also complete and extend the generalisation of Bryant's study \cite{Bryant2} by explicitly describing the general Lagrangian in $\mathcal{S}^6$
% which is ruled by geodesic circles and, furthermore, we discuss the case where the ruling circles are not geodesics.  

\subsection[Summary]{Summary}

The culmination of the work in this article is paraphrased in Theorems \ref{thm1} and \ref{thm2} below.  
  Recall that the 
6-sphere is endowed with an almost 
complex structure and thus we can define \emph{pseudoholomorphic curves} $\Sigma$ in $\mathcal{S}^6$.  If $\Sigma$ is 
non-totally geodesic we can define its first and second normal bundles $N_1\Sigma$ and $N_2\Sigma$ respectively, as well 
as its full normal bundle $N\Sigma$.  
Given any pseudoholomorphic curve $\Sigma$ we can construct a \emph{tube} 
in $N\Sigma$ which is a 3-dimensional submanifold of $\mathcal{S}^6$ ruled by circles of constant radius.  
A special family of pseudoholomorphic curves are those with \emph{null-torsion}.  
 For formal definitions we refer the reader to $\S$\ref{s2}-$\S$\ref{s4}.  

We shall denote a totally 
geodesic $n$-sphere in $\mathcal{S}^6$ by $\mathcal{S}^n$, for $n<6$.  We say that a submanifold of $\mathcal{S}^6$ is 
\emph{linearly full} if it is not contained in any $\mathcal{S}^n$ in $\mathcal{S}^6$ for $n<6$.  

\begin{thm}\label{thm1} Let $L$ be a Lagrangian in $\mathcal{S}^6$ and let $h_L$ be its second fundamental form.
\begin{itemize}
%\item[]
\item[\emph{(a)}] $L$ is not linearly full if and only if $L$ is the Hopf lift in $\mathcal{S}^5$ of a holomorphic
 curve in $\C\P^2$ as in Example \ref{s4ex3}.  Moreover, $L$ is 
ruled by geodesic circles and $h_L$ has a pointwise symmetry.
\item[\emph{(b)}]
$L$ is linearly full and $h_L$ admits a pointwise symmetry if and only if 
$L$ is locally either a tube in $N_2\Sigma$ ruled by geodesic circles over 
a pseudoholomorphic curve $\Sigma\neq\mathcal{S}^2$,  
or a tube ruled by circles of radius $\frac{2}{3}$ over a null-torsion pseudoholomorphic curve $\Sigma$, which is in $N_2\Sigma$ if 
$\Sigma\neq\mathcal{S}^2$. 
\end{itemize}
\end{thm}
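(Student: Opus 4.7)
The plan is to treat (a) and (b) separately, reducing each to an analysis of the structure equations of $\mathcal{S}^6$ adapted to pointwise symmetries of $h_L$.

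For (a), I would first argue that a Lagrangian which is not linearly full must in fact lie in a totally geodesic $\mathcal{S}^5$: any totally geodesic $\mathcal{S}^n\subset\mathcal{S}^6$ with $n\le 4$ arises from a $J$-invariant imaginary subalgebra of $\O$, so a 3-dimensional Lagrangian cannot sit inside such an $\mathcal{S}^n$ without violating $\omega|_L=0$. Inside $\mathcal{S}^5\subset\mathcal{S}^6$, the nearly K\"ahler 2-form $\omega$ pulls back to a multiple of the standard $\SU(3)$-invariant contact form, so Lagrangians become Legendrian 3-folds, and these are precisely the Hopf lifts of holomorphic curves in $\C\P^2$, which is the construction of Example \ref{s4ex3}. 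The ruling by geodesic circles is then supplied by the Hopf fibres themselves, while the pointwise symmetry of $h_L$ is immediate from the residual $\SO(2)$-isotropy acting on the orthogonal complement of the Hopf direction.

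For (b), the starting point is a pointwise classification. Minimality of $L$ forces $h_L$, viewed through $J$ as a cubic form on $T_pL\cong\R^3$, to be traceless and symmetric; classifying the stabilisers of such cubics inside $\SO(3)$ leaves only a handful of nontrivial types, each with a distinguished axis $\bfe_1\in T_pL$. I would then choose a Darboux frame along $L$ in which $h_L$ is in normal form at every point. The structure equations for $\mathcal{S}^6$ seen as $\GG_2/\SU(3)$, together with the Gauss--Codazzi equations, translate the persistence of the normal form into a rigid first-order system for the connection forms of the adapted frame.

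The main obstacle is extracting the ruling and the associated pseudoholomorphic curve from this system. Case by case, I would show that the distinguished axis $\bfe_1$ integrates to a foliation of $L$ by curves of constant geodesic curvature in $\mathcal{S}^6$; a short computation using the nearly K\"ahler torsion then identifies these curves as either great circles or small circles of radius $\tfrac{2}{3}$, the only nontrivial radius for which a circle sits at constant distance from a pseudoholomorphic curve. Quotienting by this foliation produces a surface $\Sigma$, and the remaining structure equations force $\Sigma$ to be pseudoholomorphic and $L$ to embed as a tube in $N_2\Sigma$; in the radius-$\tfrac{2}{3}$ case the extra integrability condition forces $\Sigma$ to have null torsion, and the tube is taken in $N\Sigma$ when $\Sigma=\mathcal{S}^2$, since then $N_2\Sigma$ degenerates. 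The converse direction---that every such tube is Lagrangian with symmetric $h_L$---is a routine verification in the adapted frames.
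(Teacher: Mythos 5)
The decisive gap is in your treatment of part (a). Once $L$ is not linearly full it lies in a totally geodesic $\mathcal{S}^5$ for trivial reasons (any totally geodesic $\mathcal{S}^n$, $n<6$, sits inside a totally geodesic $\mathcal{S}^5$; your preliminary claim that every totally geodesic $\mathcal{S}^n$, $n\leq 4$, comes from a $J$-invariant subalgebra of $\O$ is both unnecessary and inaccurate). The real error is the next step: the nearly K\"ahler form $\omega$ does \emph{not} restrict to the equatorial $\mathcal{S}^5$ as anything resembling the standard contact/K\"ahler data. Writing $\Im\O\cong\R\oplus\C^3$ one has $\varphi_0=\d x_1\w\omega_0+\Re\Omega_0$, so on $\{0\}\times\mathcal{S}^5$ the form $\omega$ is the contraction of $\Re\Omega_0$ with the position vector, not a multiple of $\omega_0|_{\mathcal{S}^5}$. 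Consequently $\omega|_L=0$ does not make $L$ ``Legendrian'' --- indeed it could not: Legendrian submanifolds of a $5$-dimensional contact manifold are $2$-dimensional, and the Hopf lifts you are trying to produce are tangent to the Reeb (Hopf) direction, so they are the opposite of Legendrian. The correct identification (Proposition \ref{lower}(i), resting on Harvey--Lawson) is that the cone on $L$ is a coassociative $4$-fold contained in $\{0\}\times\C^3$, hence a complex surface, so $L$ is the $\mathcal{S}^1$-invariant link of a complex cone, i.e.\ the Hopf lift of a holomorphic curve in $\C\P^2$ as in Example \ref{s4ex3}. Your justification of the ``moreover'' clause is also wrong in mechanism: the pointwise symmetry of $h_L$ is not an $\SO(2)$-invariance coming from isotropy about the Hopf direction (an $\SO(2)$-invariant cubic has normal form $rx(2x^2-3y^2-3z^2)$ by Table \ref{symtable}, which these examples do not have); rather the Hopf direction $\bfe_1$ lies in the kernel of $h_L$ (Chen's equality, Lemma \ref{Chenlemma}), so the cubic is $a\omega_2(\omega_2^2-3\omega_3^2)+b\omega_3(3\omega_2^2-\omega_3^2)$ and the stabilizer is the finite group $\SSS_3$.

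For part (b) your outline is essentially the paper's strategy: classify the possible pointwise stabilizers of the fundamental cubic as in Table \ref{symtable}, adapt frames, run the $\GGG_2$ structure equations case by case, show the $\bfe_1$-curves are great circles or circles of radius $\frac{2}{3}$, and recognise the quotient as a pseudoholomorphic curve with $L$ a tube in $N_2\Sigma$. As a plan this can be completed, but the steps you label routine are where the content lies: the $\SO(2)$- and $\AAA_4$-cases force $r=2\sqrt{5}$ and yield the rigid homogeneous examples $L_1$, $L_2$ (Examples \ref{s4ex2} and \ref{s4ex1}) over a totally geodesic $2$-sphere and the Bor\r{u}vka sphere; in the $\SSS_3$-case one must exhibit the frame adaptation \eq{s3eqs4} identifying $L$ as a $\frac{\pi}{2}$-tube in $N_2\Sigma$; and the converse that a radius-$\frac{2}{3}$ tube in $N_2\Sigma$ is Lagrangian precisely when $\Sigma$ has null-torsion is a theorem (Example \ref{s4ex5}, due to Deszcz--Dillen--Verstraelen--Vrancken), not a verification one can wave through. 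Supply those computations, and repair the identification in (a) as above, and the argument matches the paper's.
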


\begin{note}
We can define a Lagrangian tube about $\mathcal{S}^2$ ruled by circles of radius $\frac{2}{3}$, though we can no longer distinguish the first and second normal bundles.
\end{note}

\begin{thm}\label{thm2} Let $L$ be a linearly full Lagrangian in $\mathcal{S}^6$ and let 
$h_L$ denote its second fundamental form. %whose fundamental cubic 
%does not admit a pointwise symmetry. 
\begin{itemize}
\item[\emph{(a)}]  Suppose $h_L$ does not admit a pointwise symmetry. Then
$L$ is ruled by geodesic circles if and only if $L$ is locally a tube  
about a pseudoholomorphic curve $\Sigma\neq\mathcal{S}^2$, 
defined using a holomorphic curve in %section of 
a $\C\P^1$-bundle over $\Sigma$ as in Example \ref{s7ex1}.
\item[\emph{(b)}]
$L$ is ruled by non-geodesic circles of constant radius if and only if $L$ is locally a tube about a null-torsion pseudoholomorphic curve $\Sigma$ (in $N_2\Sigma$ when 
$\Sigma\neq\mathcal{S}^2$) and the ruling circles have radius $\frac{2}{3}$.  Moreover, $h_L$ admits a pointwise 
symmetry.
\end{itemize}
\end{thm}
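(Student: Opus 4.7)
The plan is to set up a moving-frame calculation along $L$ adapted to both the nearly K\"ahler $\SU(3)$-structure on $\mathcal{S}^6$ and to the ruling. Since $L$ is ruled by circles of constant radius, there is locally a fibration $\pi\colon L\to\Sigma$ whose fibres are the ruling circles; I choose an orthonormal frame $(\bfe_1,\bfe_2,\bfe_3)$ on $L$ with $\bfe_3$ tangent to the fibres. The Lagrangian condition $\omega|_L=0$ fixes the normal frame $(J\bfe_1,J\bfe_2,J\bfe_3)$ and forces several connection forms of the ambient $\SU(3)$-frame to vanish on $L$. Each ruling circle lies in an affine 2-plane in $\R^7$, whose position and orientation depend on the horizontal variables; expanding the resulting position vector and differentiating produces a first-order system relating the connection forms of the adapted frame to the horizontal coframe on $\Sigma$.

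For part (a), the rulings are geodesic, so each ruling 2-plane passes through the origin of $\R^7$. In the adapted frame, the plane of the ruling is spanned by $\bfe_3$ and $J\bfe_3$, and the map from $\Sigma$ into the Grassmannian of $J$-invariant oriented 2-planes factors through a $\C\P^1$-bundle over the base. The vanishing of $\omega$ along $L$, combined with the structure equations from the first paragraph, should force the base surface to be pseudoholomorphic in $\mathcal{S}^6$ and the assignment of ruling plane to be a holomorphic section, matching Example \ref{s7ex1}. The hypothesis that $h_L$ admits no pointwise symmetry eliminates the degenerate subcases that fall under Theorem \ref{thm1}, in particular precluding the specialization to null-torsion $\Sigma$ with a lift to $N_2\Sigma$.

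For part (b), the rulings are non-geodesic, so the affine 2-plane of each circle misses the origin and the radius $r$ is finite. The tangent field $\bfe_3$ together with the curvature vector of the circle span a distinguished 2-plane inside the normal space, producing an extra $\SO(2)$ symmetry of $h_L$ once $r$ is fixed; this will give the final clause about pointwise symmetry automatically. Computing the closure of the first-order system from the first paragraph and extracting the scalar identity coming from the constant-radius constraint, I expect a polynomial relation in $r$ whose only admissible root for linearly full $L$ is $r=\tfrac{2}{3}$, precisely the critical value in Theorem \ref{thm1}. Prolonging the resulting system one further order then forces the third fundamental form of the base $\Sigma$ to be null, so $\Sigma$ is a null-torsion pseudoholomorphic curve and $L$ lies in $N_2\Sigma$ away from the $\mathcal{S}^2$-singular fibre. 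The converse directions in (a) and (b) are then immediate by direct computation from Example \ref{s7ex1} and the tube construction.

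The principal obstacle is the derivation of the critical radius $r=\tfrac{2}{3}$ together with the null-torsion condition. Both emerge from a careful prolongation of the adapted structure equations, where keeping track of which components of the second fundamental form and its covariant derivative survive at each stage is a delicate piece of $\SU(3)$-bookkeeping; the nearly K\"ahler constant enters as the factor that pins down the numerical value $\tfrac{2}{3}$. A secondary challenge is matching the local moving-frame data with the explicit bundle construction of Example \ref{s7ex1}, which requires identifying the base surface $\Sigma$ cleanly as a pseudoholomorphic curve in $\mathcal{S}^6$ and verifying holomorphicity of the induced section of the $\C\P^1$-bundle.
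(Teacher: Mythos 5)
The main gap is in part (b). You assert that, once the rulings are non-geodesic circles of constant radius, the ruling direction together with the curvature vector of the circle produce an extra $\SO(2)$ symmetry of $h_L$ ``automatically''. This is not true, and it is precisely the hard content of the statement. A constant-radius ruling only forces the fundamental cubic into the normal form $C(r,s,a,b)$ of Lemma \ref{rcubic}, with the coefficient $r=\frac{4}{\lambda}\sqrt{1-\lambda^2}$ constant but $s,a,b$ arbitrary functions; such a cubic generically has \emph{trivial} stabilizer, so no symmetry is visible from the ruling data alone. The paper's proof (Theorem \ref{mainthm2}) therefore has to exclude the trivial-stabilizer case by a genuine computation: assuming $s\neq 0$ and $a^2+b^2\neq 0$, the closure of the structure equations \eq{secondL} is analysed in the two cases $|s|=2r$ and $|s|\neq 2r$, and each leads to a contradiction. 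Only then does one know that $C_L$ has a pointwise $\SO(2)$, $\AAA_4$ or $\Z_3$ symmetry (note that the generic quasi-ruled example has only a $\Z_3$ symmetry, so even the symmetry you predict is of the wrong type), and the radius $\frac{2}{3}$ and the null-torsion condition are then read off from the classification of symmetric cubics in $\S$\ref{s6}, where the cubic coefficient is forced to equal $2\sqrt{5}$ (equivalently $\lambda=\frac{2}{3}$) and the identification of the examples as radius-$\sin^{-1}(\frac{2}{3})$ tubes in $N_2\Sigma$ over null-torsion curves rests on the quasi-Einstein classification cited there. Your plan of extracting a single ``polynomial relation in $r$'' from the generic system and then ``prolonging one further order'' to get null-torsion is structurally off: in the asymmetric branch there is no admissible radius at all, and the symmetry statement cannot fall out as a by-product.

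Part (a) is closer in spirit to the paper (adapted frames along the ruling, then identification of a pseudoholomorphic curve and of holomorphic data in a $\C\P^1$-bundle), but two points need repair. A geodesic ruling circle through $\bfx$ with tangent $\bfe_3$ lies in the 2-plane spanned by $\bfx$ and $\bfe_3$, not in the span of $\bfe_3$ and $J\bfe_3$; in the paper the relevant complex line is $\langle\bfx,\bfe_1\rangle$, complex with respect to $J$ at the point $\bfu=-J\bfe_1$ of the associated curve $\Sigma$, not at the point of $L$. In particular $\Sigma$ is not the base surface of the fibration sitting inside $L$ but the image of $-J$ applied to the ruling direction, and the assertions you label ``should force'' are exactly where the work lies: one must exhibit the frame \eq{Xforms}, verify the $\GGG_2$ structure equations \eq{pholoeqgen}, and invoke Theorem \ref{Xcomplex}, which constructs the integrable complex structure on $\mathcal{X}(\Sigma)$ and proves the equivalence between holomorphic curves there and ruled Lagrangians via $\bfx=\bff_3+\bar{\bff}_3$. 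Finally, the no-symmetry hypothesis is used not merely to discard degenerate subcases but to guarantee $s\neq0$ and $a^2+b^2\neq0$, which is what makes $\Sigma$ non-totally geodesic and the form defining the complex structure nondegenerate.
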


%We begin in $\S$\ref{s2}-$\S$\ref{s4} by giving the basic definitions and results we require for our study.
We begin in $\S$\ref{s2} by defining the nearly K\"ahler structure on the 6-sphere and the submanifolds 
associated with this structure.  In $\S$\ref{s3} we introduce the \emph{fundamental cubic} of a Lagrangian in $\mathcal{S}^6$,  
defined by its second fundamental form, and
 derive two presentations of the structure equations of $\GGG_2$ which are adapted to the Lagrangian and pseudoholomorphic geometries. 
 In $\S$\ref{s4} we define ruled Lagrangians and characterise them using CR-holomorphic curves in the 
 space of geodesic circles in $\mathcal{S}^6$.  We also define tubes about surfaces in $\mathcal{S}^6$, 
which provide examples of ruled 3-dimensional submanifolds of 
$\mathcal{S}^6$. 

In $\S$\ref{s5} we start by studying the possible pointwise symmetries of the fundamental cubic.
We then explicitly describe the Riemann curvature tensors associated with %Lagrangian metrics.  
 the induced metrics on Lagrangian submanifolds of $\mathcal{S}^6$.  In $\S$\ref{s6} 
 we classify the Lagrangians in $\mathcal{S}^6$ whose fundamental cubic admits a pointwise symmetry. Furthermore, we
recognise these examples as Hopf lifts of holomorphic curves or locally tubes about pseudoholomorphic curves. %and thus ruled by circles of constant radius.  
 Theorem 
\ref{thm1} follows from these considerations.

%In $\S$\ref{s7} and $\S$\ref{s8} we study the general problem of ruled Lagrangians in $\mathcal{S}^6$.  
In $\S$\ref{s7} we give an explicit description of the general ruled Lagrangian.  We also classify
 the Lagrangians that are ruled by geodesic circles.  Finally, in $\S$\ref{s8}, we show that the Lagrangians 
which are ruled by non-geodesic circles of constant radius have already been described in $\S$\ref{s6}.  These 
results prove Theorem \ref{thm2}.
 
\begin{note}
In this article, we shall occasionally use the theory of exterior differential systems (EDS).  We  
will adopt the standard notion of local dependence of the solutions to an involutive system on $m$ functions of $n$ variables 
when the last non-zero Cartan character is $s_n=m$.  The author first came to understand the material discussed in this article 
through EDS analysis and thought it useful to include some of these considerations.  
However, it is not essential for the reader to be familiar with EDS in order to understand the paper. 
\end{note}

\section[Submanifolds of the nearly Kaehler 6-sphere]{Submanifolds of the nearly K\"ahler 6-sphere}\label{s2}

We shall view the 6-sphere as a \emph{nearly K\"ahler} 6-manifold, which we now define.

\begin{dfn}\label{s2dfn1}
Let $(M,g,J,\omega)$ be an almost Hermitian 6-manifold: that is, $g$ is a Riemannian metric on the almost complex 
6-manifold $M$, $J$ is the almost complex structure preserved by $g$, and $\omega$ is the associated (nondegenerate) 
Hermitian $(1,1)$-form.   The manifold $M$ is \emph{nearly K\"ahler} if there exists a nowhere vanishing $(3,0)$-form 
$\Omega$ on $M$ and a non-zero real constant $\lambda$ such that 
\begin{equation}\label{nkahler}
\d\omega=3\lambda\Re\Omega\quad\text{and}\quad \d\Im\Omega=-2\lambda\,\omega\w\omega.
\end{equation}
Equivalently, the $(2,1)$-tensor $G(X,Y)=\nabla_X(J)Y$, for vector fields $X,Y$ on $M$, is skew-symmetric and non-zero, 
where $\nabla$ is the Levi--Civita connection of $g$.  
\end{dfn}

\noindent Nearly K\"ahler 6-manifolds have structure group $\SU(3)$.  Some authors allow the possibility 
that $\lambda=0$ with $\d\Re\Omega=0$, which we will see in Definition \ref{CYSLdfn} is equivalent to including \emph{Calabi--Yau} 3-folds.  Excluding this case means that 
we have defined what other authors call \emph{strictly} nearly K\"ahler 6-manifolds.

It is well-known that a 7-dimensional cone has holonomy $\GGG_2$ if and only if its link is nearly K\"ahler.  Thus, 
$\GGG_2$ and 6-dimensional nearly K\"ahler geometry are intimately intertwined.  In particular, the nearly K\"ahler structure on $\mathcal{S}^6$ 
is induced by the $\GGG_2$ structure on the imaginary octonions $\Im\O$, recalling that $\GGG_2$ is the automorphism group of the 
cross product algebra of $\Im\O$.  It is therefore illuminating to briefly discuss distinguished submanifolds of $\Im\O$.

\begin{dfn}\label{asscoass}
Let $g_0$ be the Euclidean metric on $\Im\O$ and let $\times$ denote the octonionic cross product.  
We can define $\GGG_2$ as the stabilizer in $\GL(\Im\O)$ 
of a 3-form $\varphi_0$ on $\Im\O$ which encodes the octonionic cross product via 
\begin{equation}\label{phitimes}
\varphi_0(u,v,w)=g_0(u\times v,w).
\end{equation}
  This 3-form is closed and coclosed and, by \cite[Theorems IV.1.4 \& IV.1.16]{HarLaw},
$\varphi_0$ and its Hodge dual $*\varphi_0$ are \emph{calibrations}; that is, they are closed forms which satisfy 
$\varphi_0|_U\leq\vol_U$ and $*\varphi_0|_V\leq\vol_V$, where $U$ and $V$ are oriented tangent $3$- and $4$-planes respectively. 
 Submanifolds calibrated with respect to $\varphi_0$
and $*\varphi_0$, i.e.~those on which they restrict to be volume forms, are
called \emph{associative} 3-folds and \emph{coassociative} 4-folds respectively.  We can, by the work in 
\cite[$\S$IV]{HarLaw}, equivalently define coassociative 4-folds as the oriented 4-dimensional submanifolds of $\Im\O$ on which 
$\varphi_0$ vanishes (up to a choice of orientation).
\end{dfn}

\noindent For more details on calibrated geometry and these submanifolds, we recommend \cite{Harvey} and \cite{Joyce}.

We now define the nearly K\"ahler structure on $\mathcal{S}^6$.

\begin{dfn}\label{6spheredfn}
Embedding $\mathcal{S}^6\hookrightarrow\Im\O$ as the unit sphere, we endow $\mathcal{S}^6$ with the round metric $g$ and 
 identify $T_p\mathcal{S}^6$ with the 6-plane in 
$\Im\O$ orthogonal to $p$.  Therefore, we can define a map $J_p:T_p\mathcal{S}^6\rightarrow T_p\mathcal{S}^6$ via 
left multiplication: $J_p(u)=p\times u$.  Elementary octonionic algebra shows that $J_p^2=-1$, so we have an almost complex 
structure $J$ on $\mathcal{S}^6$.

Notice that, on $\Im\O$, we can write 
\begin{equation}\label{phi}
\varphi_0=r^2\d r\w\omega+r^3\Upsilon
\end{equation}
for some 2-form $\omega$ and 3-form $\Upsilon$ on $\mathcal{S}^6$, where $r$ is the radial coordinate on $\Im\O$.  Identifying 
 $p\in\mathcal{S}^6$ with a unit radial vector, we can use \eq{phitimes} and the definition of $J$ to show that
 %to calculate 
 %$$\varphi_0(x,y,z)=g_0(x\times y,z)=g_0(J_x(y),z)=\omega(y,z),$$
 %for vectors $y,z$ orthogonal to $x$.  Thus, 
 $\omega$ is the nondegenerate $(1,1)$-form associated with $J$.

Let $\Omega=\Upsilon+i\!*\!\Upsilon$, where $*$ is the Hodge star on $\mathcal{S}^6$.  Notice that, on $\Im\O$, the Hodge dual of $\varphi_0$ can be
written: 
\begin{equation}\label{starphi}
*\varphi_0=\frac{1}{2}\,r^4\omega\w\omega-r^3\d r\w\! *\Upsilon.
\end{equation}
Therefore, as $\varphi_0$ and $*\varphi_0$ are closed,
$\d\omega=3\Upsilon$ and $\d\!*\!\Upsilon=-2\omega\w\omega$.  Finally, we see that $\Omega$ is a nowhere vanishing $(3,0)$-form and 
that $\omega$ and $\Omega$ satisfy \eq{nkahler} for $\lambda=1$.  Hence, $\mathcal{S}^6$ is a nearly K\"ahler 6-manifold.
\end{dfn}

\begin{remarks}
The almost complex structure $J$ on $\mathcal{S}^6$ is not integrable.  Moreover, the 2-form $\omega$ is clearly 
not closed, but it does satisfy $\omega\w\d\omega=0$.
\end{remarks}

Having defined the nearly K\"ahler structure on the 6-sphere, we can present the class of submanifolds we wish to study.

\begin{dfn}
A 3-dimensional submanifold $L$ of $\mathcal{S}^6$ is \emph{Lagrangian} if $\omega|_L\equiv 0$.  Equivalently, 
$J_p(T_pL)=N_pL$ for all $p\in L$. %, where $N_pL$ is the normal space of $L$ in $\mathcal{S}^6$ at $p$.
\end{dfn}

\noindent Here we have generalised the notion of Lagrangian submanifold, usually reserved for symplectic manifolds, to the 
 \emph{almost symplectic} 6-sphere.  However, since $\mathcal{S}^6$ has a nearly K\"ahler structure,
Lagrangians in $\mathcal{S}^6$ have more properties than one would expect from the general almost symplectic case.

\begin{remark}
Many authors refer to Lagrangians in $\mathcal{S}^6$ as \emph{totally real} 3-dimensional submanifolds of $\mathcal{S}^6$.
\end{remark}

We now show the connection between Lagrangian geometry in $\mathcal{S}^6$ and $\GGG_2$ geometry in $\Im\O$, and give some of its 
consequences.

\begin{prop}\label{coasscones} 
A 4-dimensional cone in $\Im\O$ is coassociative if and only if its link in $\mathcal{S}^6$ is Lagrangian.
\end{prop}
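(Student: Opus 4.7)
The plan is to exploit the explicit decomposition \eq{phi} of $\varphi_0$ together with the defining nearly K\"ahler identity $\d\omega = 3\Upsilon$ from Definition \ref{6spheredfn}. Writing $C = \{rx : r > 0,\ x \in L\}$ for the cone over $L$, I would parametrise $C$ by the map $\R_{>0}\times L \to \Im\O$, $(r,x) \mapsto rx$. Because the projection $\Im\O\setminus\{0\}\to \mathcal{S}^6$ pulls $\omega$ and $\Upsilon$ back as in \eq{phi}, the pullback of $\varphi_0$ to $\R_{>0}\times L$ reads
\begin{equation*}
\varphi_0|_C = r^2 \,\d r \w \omega|_L + r^3\, \Upsilon|_L,
\end{equation*}
where the restrictions are taken via the inclusion $L \hookrightarrow \mathcal{S}^6$. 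Since $\d r$ is transverse to the $L$-factor, this 3-form vanishes identically on $\R_{>0}\times L$ if and only if $\omega|_L = 0$ and $\Upsilon|_L = 0$ both hold.

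By Definition \ref{asscoass}, coassociativity of $C$ is precisely the condition $\varphi_0|_C = 0$, so the computation above reduces the proposition to showing that, for a $3$-dimensional $L\subset\mathcal{S}^6$, the single Lagrangian condition $\omega|_L = 0$ is equivalent to the pair $\omega|_L = 0$ and $\Upsilon|_L=0$. One direction is trivial; for the other, I would note that pullback along $L\hookrightarrow \mathcal{S}^6$ commutes with $\d$, so $\omega|_L = 0$ gives
\begin{equation*}
0 = \d(\omega|_L) = (\d\omega)|_L = 3\,\Upsilon|_L,
\end{equation*}
using \eq{nkahler} (or equivalently the calculation $\d\omega = 3\Upsilon$ made at the end of Definition \ref{6spheredfn}).

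There is no serious obstacle here; the one point with actual content is the automatic vanishing $\Upsilon|_L = 0$, which is where the nearly K\"ahler structure (as opposed to a general almost symplectic structure) is essential. Without it, the Lagrangian hypothesis would only supply the $\d r\w\omega|_L$ piece of $\varphi_0|_C$ and one would be left with a non-trivial residue $r^3\Upsilon|_L$.
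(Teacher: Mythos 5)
Your proposal is correct and follows essentially the same route as the paper: use the characterisation of coassociativity as $\varphi_0|_C\equiv 0$, decompose $\varphi_0$ via \eq{phi} to see that this is equivalent to $\omega|_L=\Upsilon|_L\equiv 0$, and then note that $3\Upsilon=\d\omega$ makes the second condition automatic once $\omega|_L\equiv 0$. You merely spell out the pullback computation and the commuting of restriction with $\d$ in more detail than the paper does.
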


\begin{proof}
Recall that a 4-dimensional cone $C$ in $\Im\O$ is coassociative if and only if $\varphi_0|_C\equiv 0$.  From  
\eq{phi}, we see that $\varphi_0|_C\equiv 0$ if and only if $\omega$ and $\Upsilon$ vanish on its link $L$.  However, $\Upsilon$ 
is a non-zero multiple of $\d\omega$, so $\Upsilon|_L\equiv0$ if $\omega|_L\equiv 0$.
\end{proof}

Since coassociative 4-folds are minimal \cite[Theorem II.4.2]{HarLaw} (in fact, 
volume minimizing in their homology class \cite[Theorem 7.5]{Harvey}) and are real analytic whereever 
they are nonsingular \cite[Theorem 12.1.5]{Joyce} we 
deduce the following.

\begin{cor}
Lagrangians in $\mathcal{S}^6$ are minimal and real analytic away from their singularities.
\end{cor}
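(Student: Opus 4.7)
The plan is to deduce the corollary directly from Proposition \ref{coasscones} together with the cited properties of coassociative 4-folds. Given a Lagrangian $L \subset \mathcal{S}^6$, I would associate to it the open cone $C = \{t p : t > 0,\ p \in L\} \subset \Im\O$, whose link is $L$. By Proposition \ref{coasscones}, $C$ is a coassociative 4-fold. The two properties I want on $L$ will then be inherited from the corresponding properties of $C$ cited from \cite{HarLaw} and \cite{Joyce}.

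The only nontrivial step is checking that minimality and real analyticity \emph{descend} from the cone to the link. For minimality, I would use the classical fact that a submanifold $L$ of the unit sphere $\mathcal{S}^n \subset \R^{n+1}$ is minimal in $\mathcal{S}^n$ with its round metric if and only if the associated Euclidean cone $C$ is minimal in $\R^{n+1}$. This follows from a direct comparison of the second fundamental forms of $L\subset\mathcal{S}^n$ and $C\subset\R^{n+1}$: the cone directions contribute nothing to the mean curvature, so the mean curvature vector of $C$ at a point $tp$ is $t^{-1}$ times that of $L$ at $p$. For real analyticity, the map $L \times (0,\infty) \to C\setminus\{0\}$, $(p,t) \mapsto tp$, is a real-analytic diffeomorphism with respect to the smooth structures; since $C\setminus\{0\}$ is real analytic by the coassociative case of \cite[Theorem 12.1.5]{Joyce}, and $L$ is realised as the transverse intersection of $C$ with the real-analytic hypersurface $\mathcal{S}^6$, one concludes that $L$ is real analytic away from the singular locus of $C$, which corresponds to the singular locus of $L$.

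The main obstacle is essentially bookkeeping rather than mathematical: one must verify that ``singularities of the cone'' away from the vertex correspond precisely to ``singularities of the link'', so that the real-analyticity statement for $C\setminus\{0\}$ (which is real analytic away from its singular set) transfers cleanly to $L$ (which is real analytic away from its singular set). Since $L$ is assumed to be a submanifold and hence nonsingular wherever it is defined smoothly, this correspondence is immediate once we note that the vertex of the cone is not part of the link.
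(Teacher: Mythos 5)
Your proposal is correct and follows exactly the route the paper takes: apply Proposition \ref{coasscones} to pass to the coassociative cone, then quote the minimality and real-analyticity results of \cite{HarLaw} and \cite{Joyce}. The paper simply leaves the descent from cone to link (the mean curvature scaling and the real-analytic identification $C\setminus\{0\}\cong L\times(0,\infty)$) implicit, whereas you have spelled it out.
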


\begin{remark}
The minimality of Lagrangians in $\mathcal{S}^6$ is observed in \cite[Theorem 1]{Ejiri}.
\end{remark}

We can also perhaps explain these properties of Lagrangians in $\mathcal{S}^6$ by considering them 
as ``\emph{special Lagrangian}'' submanifolds of $\mathcal{S}^6$.  We take the time now to remind the reader of the 
definition of special Lagrangian $m$-folds.

\begin{dfn}\label{CYSLdfn}
Let $(M,g,J,\omega)$ be a (complex) $m$-dimensional K\"ahler manifold, where $g$ is the metric, $J$ is the complex 
structure and $\omega$ is the K\"ahler form.  We say that $M$ is a \emph{Calabi--Yau} $m$-fold if $M$ is endowed 
with a nowhere vanishing $(m,0)$-form $\Omega$ such that $\d\Re\Omega=\d\Im\Omega=0$.
%In particular, $\C^m$ is a Calabi--Yau $m$-fold.

A real oriented $m$-dimensional submanifold $L$ of a Calabi--Yau $m$-fold $M$ is \emph{special Lagrangian} (with phase $e^{i\theta}$) 
if $\omega|_L\equiv 0$ and $\Im e^{-i\theta}\Omega|_L\equiv 0$.   Equivalently, $\Re e^{-i\theta}\Omega$ is a calibration 
on $M$ and special Lagrangian $m$-folds with phase $e^{i\theta}$ are its calibrated submanifolds; 
i.e.~$L$ satisfies $\Re e^{-i\theta}\Omega|_L=\vol_L$.
\end{dfn}

If $L$ is a 3-dimensional submanifold of $\mathcal{S}^6$ satisfying $\omega|_L\equiv0$, 
then $\Upsilon=\Re\Omega$ also vanishes on $L$ as $3\Upsilon=\d\omega$.  Thus, 
$L$ is Lagrangian in $\mathcal{S}^6$ if and only if $\omega|_L=\Re\Omega|_L\equiv0$.  Notice from \eq{starphi} that,
for any oriented tangent 3-plane $V$, $-\!*\!\Upsilon|_V\leq\vol_V$, since $*\varphi_0$ is a calibration.  
Therefore, $-\!*\!\Upsilon=-\Im\Omega$ satisfies the condition to be a calibration on $\mathcal{S}^6$, except that it is not closed.  
Moreover, using \eq{starphi} in 
conjunction with Proposition \ref{coasscones}, we observe that $-\!*\!\Upsilon|_L=-\Im\Omega|_L=\vol_L$ for a Lagrangian $L$ in $\mathcal{S}^6$.  
Therefore, Lagrangians in $\mathcal{S}^6$ are, in this sense, ``special Lagrangian'' (with phase $-i$) 
with respect to the nearly K\"ahler structure. This leads a few authors to call
 Lagrangian submanifolds of the 6-sphere ``special Lagrangian'', though we are disinclined to join them in this notation.  
That said, 
 we can continue the special Lagrangian analogy as follows.
 
\begin{prop}\label{extend}
Let $P$ be a real analytic 2-dimensional submanifold of $\mathcal{S}^6$ such that $\omega|_P\equiv 0$.  Locally 
there exists a Lagrangian submanifold $L$ of $\mathcal{S}^6$ containing $P$.  Moreover, $L$ is locally unique.
\end{prop}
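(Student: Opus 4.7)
The plan is to apply the Cartan--K\"ahler theorem to the exterior differential system $\mathcal{I}$ on $\mathcal{S}^6$ generated, as a differential ideal, by $\omega$. Since $\d\omega=3\Upsilon$, the algebraic generators of $\mathcal{I}$ are $\omega$ and $\Upsilon=\Re\Omega$, and by the discussion immediately preceding the proposition the $3$-dimensional integral manifolds of $\mathcal{I}$ are exactly the Lagrangians of $\mathcal{S}^6$.

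First I would verify that $P$ is a real analytic $2$-dimensional integral manifold of $\mathcal{I}$: by hypothesis $\omega|_P\equiv 0$, and $\Upsilon|_P\equiv 0$ automatically since $\Upsilon$ is a $3$-form. Next, at each $p\in P$, I would enumerate the $3$-dimensional integral elements of $\mathcal{I}$ containing $T_pP$. Choose an $\SU(3)$-adapted frame $(e_1,e_2,e_3,f_1,f_2,f_3)$ at $p$, with $f_i=Je_i$ and dual coframe $(e^i,f^i)$, so that $\omega=\sum_i e^i\w f^i$ and
\[
\Re\Omega=e^1\w e^2\w e^3-e^1\w f^2\w f^3+e^2\w f^1\w f^3-e^3\w f^1\w f^2,
\]
arranged with $T_pP=\mathrm{span}(e_1,e_2)$. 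The condition $\omega|_V=0$ on an extending $3$-plane $V\supset T_pP$ forces the new vector to lie, modulo $T_pP$, in $\mathrm{span}(e_3,f_3)$, while the additional condition $\Upsilon|_V=0$ cuts this down to the unique line spanned by $f_3=Je_3$. In particular the last Cartan character vanishes, $s_3=0$.

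Since a unique admissible extension direction is determined at every point of $P$, the Cartan--K\"ahler theorem (equivalently, a Cauchy--Kowalewski argument applied to the real analytic rank-$3$ distribution obtained by adjoining this extension direction to $TP$) produces a real analytic $3$-dimensional integral submanifold $L\supset P$ of $\mathcal{I}$, which is the Lagrangian sought; local uniqueness follows from $s_3=0$.

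The main obstacle is the linear-algebraic verification that $\omega|_V=0$ and $\Upsilon|_V=0$ together select a unique $3$-plane extending $T_pP$; this is a short $\GGG_2$-equivariant computation, but it is precisely what renders the Cauchy problem determinate (the lone Lagrangian condition $\omega|_V=0$ would leave a $\P^1$ of extensions). With this pointwise uniqueness in place, both existence and local uniqueness of $L$ follow from the standard theory of real analytic exterior differential systems.
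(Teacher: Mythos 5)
Your proposal is correct and takes essentially the same approach as the paper: both apply the Cartan--K\"ahler theorem to the exterior differential system generated algebraically by $\omega$ and $\Upsilon=\Re\Omega$, the key point being that these two conditions make the extension of an isotropic surface a determined problem (unique polar direction, $r=0$), giving local existence and uniqueness. The only difference is presentational: the paper cites the special Lagrangian case in $\C^3$ from Harvey--Lawson and notes that the two ideals are algebraically identical, whereas you carry out the polar-space computation directly on $\mathcal{S}^6$.
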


\begin{proof}
 Let $\omega_0$ and $\Omega_0$ be the K\"ahler and holomorphic volume forms associated with the standard Calabi--Yau structure 
 on $\C^3$.  
Using the Cartan--K\"ahler Theorem one can prove the analogous statement of the proposition \cite[Theorem III.5.5]{HarLaw}
for special Lagrangians (with phase $-i$) in $\C^3$ by 
considering the exterior differential system on $\C^3$ with differential ideal generated by $\omega_0$ 
and $\Re\Omega_0$.  By the comments above, for Lagrangians in $\mathcal{S}^6$ we are lead to consider
 the EDS with ideal generated by $\omega$ and $\Upsilon$.  Algebraically 
the ideals have the same properties, since they are both algebraically generated by a 2-form and a 3-form (in the 
special Lagrangian case because they are both closed, and in the Lagrangian case because $3\Upsilon=\d\omega$).  Since the proof using the Cartan--K\"ahler Theorem only relies on the
 algebra of the EDS, the proposition follows.
\end{proof}

By the proof of Proposition \ref{extend}, special Lagrangians in $\C^3$ and Lagrangians in $\mathcal{S}^6$ 
have the same `local existence' properties: they both depend 
locally on 2 functions of 2 variables.  In contrast, Lagrangians in symplectic 6-manifolds, by 
Darboux's Theorem, depend locally on an arbitrary function of 3 variables 
 (and there is no need for real analyticity).
 
\medskip

Just as there is interplay between the complex and symplectic geometry of a K\"ahler manifold, there are connections between  
the almost complex and almost symplectic geometries of the nearly K\"ahler 6-sphere.  This leads us to define another 
distinguished class of submanifolds of $\mathcal{S}^6$. %which will appear throughout this article.

\begin{dfn}
A 2-dimensional submanifold $\Sigma$ of $\mathcal{S}^6$ is a \emph{pseudoholomorphic curve} if $\omega|_{\Sigma}=\vol_{\Sigma}$.  
Equivalently, $J_p(T_p\Sigma)=T_p\Sigma$ for all $p\in \Sigma$.
\end{dfn}

\begin{remark}
There are no almost complex 4-folds in $\mathcal{S}^6$ \cite[Proposition 4.1]{Bryant1}.
\end{remark}

From \eq{phi}, we observe that a 3-dimensional cone in $\Im\O$ is associative if and only if its
 link in $\mathcal{S}^6$ is a pseudoholomorphic curve.   
We may readily deduce some of the well-known properties of pseudoholomorphic curves.  
 First, they are minimal and real analytic away from their 
 singularities.  Second, every real analytic curve in $\mathcal{S}^6$ can be locally extended in a locally unique way to a 
 pseudoholomorphic curve.  Hence, pseudoholomorphic curves in $\mathcal{S}^6$ depend locally on 
 4 functions of 1 variable. 

%Bryant \cite[Theorem 4.10]{Bryant1} used the theory of EDS to show 
%that every Riemann surface can be realised as a (possibly branched) pseudoholomorphic curve in $\mathcal{S}^6$.

\begin{remarks}
We can define Lagrangian and almost complex submanifolds in any nearly K\"ahler 6-manifold, in the obvious manner.  Since the 
results of this section on submanifolds of $\mathcal{S}^6$ 
follow directly from the definition of a nearly K\"ahler structure in 6 dimensions, and its relationship 
with $\GG_2$ geometry, they can all be naturally generalised for the corresponding submanifolds of arbitrary nearly 
K\"ahler 6-manifolds.  However, most of the later results in this article rely heavily on the particular geometry of $\mathcal{S}^6$ 
and so will not immediately generalise.
\end{remarks}

\section[The structure equations and the fundamental cubic]{The structure equations and the fundamental cubic}\label{s3}

In this section we provide the formulae that we require for our calculations later.  In particular, we 
view the frame bundle over $\mathcal{S}^6$ as $\GGG_2$, since $\mathcal{S}^6\cong\GGG_2/\SU(3)$, and thus
 give two presentations of the structure equations of $\GGG_2$ adapted to the Lagrangian and pseudoholomorphic 
curve scenarios.  Along the way we introduce the fundamental cubic of a Lagrangian submanifold, 
which is a useful means of encoding the second fundamental form.

\medskip

Since the nearly K\"ahler structure on $\mathcal{S}^6$ is defined using the octonions $\O$, it is useful for reference 
to have the multiplication table for $\O$.  Let $\O$ be 
spanned by $1$ and $\{\eps_1,\ldots,\eps_7\}$ satisfying the multiplication law below.  
\begin{equation*}
\begin{gathered}
\begin{array}{rrrrrrrrrr}
  & \vline & 1  &  \eps_1 &  \eps_2 &  \eps_3 &  \eps_4 &  \eps_5 &  \eps_6 &  \eps_7 \\
\hline
  1 & \vline &  1  &  \eps_1 &  \eps_2 &  \eps_3 &  \eps_4 &  \eps_5 &  \eps_6 &  \eps_7 \\
\eps_1 & \vline & \eps_1 &  -1  &  \eps_3 & -\eps_2 &  \eps_5 & -\eps_4 &  \eps_7 & -\eps_6 \\
\eps_2 & \vline & \eps_2 & -\eps_3 &  -1  &  \eps_1 &  \eps_6 & -\eps_7 & -\eps_4 &  \eps_5 \\
\eps_3 & \vline & \eps_3 &  \eps_2 & -\eps_1 &  -1  & -\eps_7 & -\eps_6 &  \eps_5 &  \eps_4 \\
\eps_4 & \vline & \eps_4 & -\eps_5 & -\eps_6 &  \eps_7 &  -1  &  \eps_1 &  \eps_2 & -\eps_3 \\
\eps_5 & \vline & \eps_5 &  \eps_4 &  \eps_7 &  \eps_6 & -\eps_1 &  -1  & -\eps_3 & -\eps_2 \\
\eps_6 & \vline & \eps_6 & -\eps_7 &  \eps_4 & -\eps_5 & -\eps_2 &  \eps_3 &  -1  &  \eps_1 \\
\eps_7 & \vline & \eps_7 &  \eps_6 & -\eps_5 & -\eps_4 &  \eps_3 &  \eps_2 & -\eps_1 &  -1
\end{array}
\end{gathered}
\end{equation*}
Then $\Im\O$, the imaginary octonions, is spanned by the $\eps_j$.  We shall denote the cross product on $\Im\O$ 
by $\times$, as is standard practice.

\begin{note}
Some authors use a basis for $\Im\O$ which has the opposite orientation: 
the difference in our formulae can be accounted for by a change of sign of $\eps_7$. 
\end{note}

\subsection[Lagrangian submanifolds]{Lagrangian submanifolds}\label{s3subs1}

%We start with some common notation.

Let $\bfx:L\rightarrow\mathcal{S}^6$ be a Lagrangian immersion and let $g_L$ be the induced metric on $L$. 
At each point $p$ in $L$, let $\{\bfe_1(p),\bfe_2(p),\bfe_3(p)\}$ be an orthonormal
 basis for $T_pL$ and let $\{2\omega_1(p),2\omega_2(p),2\omega_3(p)\}$ define the dual orthonormal coframe.     Notice
that $\{J\bfe_1(p),J\bfe_2(p),J\bfe_3(p)\}$ defines an orthonormal basis for $N_pL$.
%
%Let $\nabla^0$, $\nabla$ and $\nabla^L$ be the obvious Levi--Civita connections on $\Im\O$, 
%$\mathcal{S}^6$ and 
%$L$.  
 We will implicitly identify $L$ with its image in $\mathcal{S}^6$ and identify the tangent and 
normal vectors at $p\in L$ with their push-forwards in $T_p\mathcal{S}^6\cong\langle p\rangle^{\perp}\subseteq\Im\O$. 
%  Finally, let $h_L$ denote the second fundamental form of $L$.

%We have that:
%\begin{subequations}
%\begin{align}
%\nabla^0_{\bfe_i}\bfe_j&=-\delta_{ij}\bfx+\nabla_{\bfe_i}\bfe_j;\\
%\nabla^0_{\bfe_i}J\bfe_j&=\nabla_{\bfe_i}J\bfe_j;\\
%\nabla_{\bfe_i}\bfe_j&=\nabla^L_{\bfe_i}\bfe_j+h_L(\bfe_i,\bfe_j);\;\,\text{and}\\
%\nabla_{\bfe_i}J\bfe_j&=J\,h_L(\bfe_i,\bfe_j)+J(\nabla_{\bfe_i}\bfe_j)+\bfe_i\times\bfe_j.
%\end{align}
%\end{subequations}

\begin{note}
We define $2\omega_j$ to be the dual 1-form to the vector $\bfe_j$ to make our 
 %cut down on the factors of 
 %`$\frac{1}{2}$' in the 
 structure equations below look neater.
\end{note}

We now introduce the fundamental cubic of a Lagrangian submanifold.

\begin{dfn}\label{funcubic}
Adopting summation notation, we can locally write the second fundamental form of $L$ as 
$$h_L=4 h_{ijk}J\bfe_i\otimes\omega_j\omega_k$$
for some totally symmetric tensor of functions $h_{ijk}$, which satisfies $h_{iik}=0$ as $L$ is minimal. 
 We may therefore define the \emph{fundamental cubic} $C_L$ of $L$ as
$$ C_L=8 h_{ijk}\omega_i\omega_j\omega_k. $$
Thus, $C_L$ is a section of $S^3T^*L$ which encodes the second fundamental form of $L$.  
\end{dfn}

\noindent We can realise $C_L$ pointwise as a homogeneous harmonic cubic on $\R^3$.  
This picture will be useful from 
an algebraic standpoint.

\medskip

%As we mentioned earlier, we shall require the structure equations of $\GGG_2$.  
%These are given in \cite{Bryant1} in a form adapted to the pseudoholomorphic curve situation, 
%so here we derive an expression best suited to the Lagrangian case.
We now derive an expression for the structure equations of $\GGG_2$ best suited to Lagrangian geometry.
 The Lie algebra of $\GGG_2$, $\g2$, has the following matrix presentation:
\begin{align*}
\mathfrak{g}_2 &=\left\{\left(\begin{array}{ccc} 0 & -2\omega^{\rm T} & -2\eta^{\rm T} \\
2\omega & \alpha+[\omega] & -\beta-[\eta] \\
2\eta & \beta-[\eta] & \alpha - [\omega] \end{array}\right)\,:\,\omega,\eta\in \text{M}_{3\times 1}(\mathbb{R}),\right.\\
&\qquad\qquad\qquad\qquad\qquad\qquad\qquad\qquad\qquad\,\left.
\alpha\in\Skew_{3}(\mathbb{R}),\, \beta\in\Sym_{3}^0(\mathbb{R})\right\},
\end{align*}
where $\Sym_3^0(\mathbb{R})$ is the space of traceless symmetric $3\times 3$ real matrices and
$$[(x\;y\;z)^{\rm T}]=\left(\begin{array}{ccc} 0 & z & -y \\
-z & 0 & x \\
y & -x & 0\end{array}\right).$$ 
Let $\gg:\GGG_2\rightarrow\GL(7,\R)$ be the map taking $\GGG_2$ to the identity component of the Lie subgroup of 
$\GL(7,\R)$ with Lie algebra $\g2$.  Write $\gg=(\bfx\; \bfe\; \bfe^{\perp})$, where $\bfe=(\bfe_1\;\bfe_2\;\bfe_3)$ and 
$\bfe^{\perp}=(\bfe_1^{\perp}\;\bfe_2^{\perp}\;\bfe_3^{\perp})$ are in $\text{M}_{7\times 3}(\R)$.  
Since the Maurer--Cartan form $\phi=\gg^{-1}\d\gg$ takes values in $\g2$, it can be written as
$$\phi=\left(\begin{array}{ccc} 0 & -2\omega^{\rm T} & -2\eta^{\rm T} \\
2\omega & \alpha+[\omega] & -\beta-[\eta] \\
2\eta & \beta-[\eta] & \alpha - [\omega] \end{array}\right)$$
for some appropriate matrices of 1-forms $\omega, \eta, \alpha, \beta$.  

We can adapt frames on $L$ so that $\bfx$ is identified with a point in $L$, and $\bfe$ and $2\omega$ define an orthonormal frame
and coframe for $L$.  Thus, we can set $\bfe^{\perp}=J\bfe$ and see that $\eta$ vanishes on $L$.  
From this adaptation, we recognise $\alpha+[\omega]$ as
 the connection 1-form for the Levi--Civita connection $\nabla^L$ of the metric $g_L$.  

From $\d\gg=\gg\phi$ and the Maurer-Cartan equation $\d\phi+\phi\w\phi=0$,
 we immediately derive the \emph{first} and \emph{second} \emph{structure equations} of $\GGG_2$.

\begin{prop}%[The First Structure Equations]
\label{s2prop1} %Letting $\omega_{ij}$ denote the entries in $[\omega]$, 
The first structure of equations of $\GGG_2$ can be written:
%\begin{subequations}\label{firstG2}
\begin{align*}
%\d\bfx&=2\bfe_i\omega_i+2\bff_i\eta_i;\\
\d\bfx&=2\bfe\omega+2\bfe^{\perp}\eta;\\
%\d\bfe_i&=-2\bfx\omega_i+\bfe_j(\alpha_{ij}+\omega_{ij}) +\bff_j(\beta_{ij}-\eta_{ij}) ;\\
\d\bfe&=-2\bfx\omega^{\rm T}+\bfe(\alpha+[\omega])+\bfe^{\perp}(\beta-[\eta]);\\
%\d\bff_i&=-2\bfx\eta_i- \bfe_j(\beta_{ij}+\eta_{ij}) + \bff_j(\alpha_{ij}-\omega_{ij}).
\d\bfe^{\perp}&=-2\bfx\eta^{\rm T}-\bfe(\beta+[\eta])+\bfe^{\perp}(\alpha-[\omega]).
\end{align*}
%\end{subequations}
%on the adapted frame bundle of $L$, these equations become:
On the adapted frame bundle of $L$, these equations become:
\begin{subequations}\label{firstL}
\begin{align}
%\d\bfx&=2\bfe_i\omega_i;\\
\d\bfx&=2\bfe\omega;\\
%\d\bfe_i&=-2\bfx\omega_i+ \bfe_j(\alpha_{ij}+\omega_{ij})+\bff_j\beta_{ij};\\
\d\bfe&=-2\bfx\omega^{\rm T}+\bfe(\alpha+[\omega])+J\bfe\beta;\\
%\d\bff_i&=-\bfe_j\beta_{ij} + \bff_j(\alpha_{ij}-\omega_{ij}).
\d J \bfe&=-\bfe\beta+J\bfe(\alpha-[\omega]).
\end{align}
\end{subequations}
\end{prop}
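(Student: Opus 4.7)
The proof is a direct matrix computation that unfolds $\d\gg = \gg\phi$, which is simply the definition of the Maurer--Cartan form $\phi = \gg^{-1}\d\gg$. My plan is to expand this identity column by column using the $1{+}3{+}3$ block decomposition $\gg = (\bfx\;\bfe\;\bfe^\perp)$ and the given block form of $\phi$. Multiplying out, the first column of $\gg\phi$ is $\bfe(2\omega) + \bfe^\perp(2\eta)$; the middle block of three columns is $-\bfx(2\omega^{\mathrm{T}}) + \bfe(\alpha + [\omega]) + \bfe^\perp(\beta - [\eta])$; and the right-hand block of three columns is $-\bfx(2\eta^{\mathrm{T}}) - \bfe(\beta + [\eta]) + \bfe^\perp(\alpha - [\omega])$. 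Equating these with the corresponding blocks of $\d\gg = (\d\bfx\;\d\bfe\;\d\bfe^\perp)$ produces the three general structure equations verbatim; no algebraic identity beyond block matrix multiplication is required.

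To specialise to the adapted frame bundle of $L$, the plan is to impose the two reductions used to define it: that $\bfe$ spans $T_{\bfx}L$ and that $\bfe^\perp = J\bfe$. The tangency condition means $\d\bfx$ lies in $\mathrm{span}(\bfe)$, so comparing with the first general equation $\d\bfx = 2\bfe\omega + 2\bfe^\perp\eta$ and using that $\bfe^\perp$ is linearly independent of $\bfe$ forces $\eta \equiv 0$ when pulled back to the adapted frame bundle. Substituting $\eta = 0$ (and hence $[\eta] = 0$) together with $\bfe^\perp = J\bfe$ into the three general equations then yields the three adapted equations exactly as stated.

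The only step requiring real care is verifying that the adaptation is well-posed, i.e.\ that $(\bfx\;\bfe\;J\bfe)$ genuinely defines a section of the $\GGG_2$-frame bundle rather than merely of the $\SO(7)$-frame bundle, so that $\phi$ does take values in $\g2$ as assumed. This uses the Lagrangian condition $\omega|_L \equiv 0$, which by definition of a Lagrangian submanifold gives $J_p(T_pL) = N_pL$ so that $J\bfe$ is genuinely an orthonormal normal frame, together with the identification of $J$ with the octonionic cross product by $\bfx$: since $\GGG_2$ is by definition the stabiliser of $\varphi_0$, which encodes the cross product via \eq{phitimes}, compatibility with the $\GGG_2$-structure is then automatic. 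I expect this compatibility check to be the only non-routine ingredient; everything else is bookkeeping.
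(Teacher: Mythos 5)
Your proposal is correct and follows exactly the route the paper takes: the paper derives the general equations immediately from $\d\gg=\gg\phi$ with the stated block form of $\phi$, and obtains the adapted equations by setting $\bfe^{\perp}=J\bfe$ and observing that $\eta$ vanishes on the adapted frame bundle, just as you do. Your additional check that $(\bfx\;\bfe\;J\bfe)$ is a genuine $\GGG_2$-adapted frame, via the Lagrangian condition $J_p(T_pL)=N_pL$ and the cross-product definition of $J$, is a sensible piece of care that the paper leaves implicit.
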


%Using the Maurer--Cartan equation $\d\phi+\phi\w\phi=0$ yields the \emph{second structure equations}.

\begin{prop}%[The Second Structure Equations]
\label{s2prop2} 
%Use the notation of Proposition \ref{s2prop1}.
The second structure equations of $\GGG_2$ are:
%\begin{subequations}\label{secondG2}
\begin{align*}
%\d\omega_i=&-(\alpha_{ij}+\omega_{ij})\w\omega_j+(\beta_{ij}+\eta_{ij})\w\eta_j;\\
\d\omega=&-(\alpha+[\omega])\w\omega+(\beta+[\eta])\w\eta;\\
%\d\eta_i=&-(\beta_{ij}-\eta_{ij})\w\omega_j-(\alpha_{ij}-\omega_{ij})\w\eta_j;\\
\d\eta=&-(\beta-[\eta])\w\omega-(\alpha-[\omega])\w\eta;\\
%\d\alpha_{ij}=&-\alpha_{ik}\w\alpha_{kj}+\beta_{ik}\w\beta_{kj}+2\omega_i\w\omega_j 
%+2\eta_i\w\eta_j \nonumber\\&-\omega_{ik}\w\omega_{kj}-\eta_{ik}\w\eta_{kj};\\
\d\alpha=&-\alpha\w\alpha+\beta\w\beta+3\omega\w\omega^{\rm T}+3\eta\w\eta^{\rm T};\\
%\d\beta_{ij}=&-\alpha_{ik}\w\beta_{kj}-\beta_{ik}\w\alpha_{kj}-2\omega_i\w\eta_j+2\eta_i\w\omega_j \nonumber\\
%&-\omega_{ik}\w\eta_{kj}+\eta_{ik}\w\omega_{kj}.
\d\beta=&-\alpha\w\beta-\beta\w\alpha-2\omega\w\eta^{\rm T}+2\eta\w\omega^{\rm T}-[\omega]\w[\eta]+[\eta]\w[\omega].
\end{align*}
%\end{subequations}
On the adapted frame bundle of $L$ there exists a fully symmetric tensor of functions $h=h_{ijk}$ such that 
the structure equations become: 
\begin{subequations}\label{secondL}
\begin{align}
%\d\omega_i&=-(\alpha_{ij}+\omega_{ij})\w\omega_j;
\d\omega&=-(\alpha+[\omega])\w\omega;
\label{secondLa}\\
%\beta_{ij}&=2 h_{ijk}\omega_k;
\beta&=2h\omega;
\label{secondLb}\\
%\d\alpha_{ij}&=-\alpha_{ik}\w\alpha_{kj}+
%\frac{1}{2}(4 h_{ikq}h_{jlq}-4 h_{ilq}h_{jkq}+3\delta_{ik}\delta_{jl}-3\delta_{il}\delta_{jk})
%\omega_k\w\omega_l;
\d\alpha&=-\alpha\w\alpha+\beta\w\beta+3\omega\w\omega^{\rm T};
\label{secondLc}\\
%\d\beta_{ij}&=-\alpha_{ik}\w\beta_{kj}-\beta_{ik}\w\alpha_{kj}.
\d\beta&=-\alpha\w\beta-\beta\w\alpha.\label{secondLd}
%\d h_{ijk}&=-h_{l(ij}\alpha_{k)l}+H_{ijkl}\omega_l,
\end{align}
\end{subequations}
Therefore, on the adapted frame bundle of $L$,
\begin{subequations}
\begin{align}
%\d(\alpha_{ij}+\omega_{ij})&=-(\alpha_{ik}+\omega_{ik})\w(\alpha_{kj}+\omega_{kj}) \nonumber\\&\qquad+
%\frac{1}{2}
%2(h_{ikq}h_{jlq}-h_{ilq}h_{jkq}+\delta_{ik}\delta_{jl}-\delta_{il}\delta_{jk})
%\omega_k\w\omega_l %\tag{Ga}
\d(\alpha+[\omega])+(\alpha+[\omega])\w(\alpha+[\omega])&=4\big(h\omega\w h\omega+\omega\w\omega^{\rm T}\big)\quad\text{and}
\label{Ga}\\
%\intertext{and}
%\d h_{ijk}&=-h_{l(ij}\alpha_{k)l}-\frac{1}{2}h_{l(ij}\omega_{k)l}+H_{ijkl}\omega_l%\tag{Co}
\d h+\big(\big(h\alpha+\textstyle\frac{1}{2}h[\omega]\big)\big)&=H\omega
\label{Co}
\end{align}
\end{subequations}
for some fully symmetric tensor of functions $H=H_{ijkl}$, where $\big(\big(\,\,\big)\big)$ indicates symmetrisation over the indices:
 i.e.~in summation notation,
$$\big(\big(h\alpha\big)\big)_{ijk}=h_{lij}\alpha_{kl}+h_{ljk}\alpha_{il}+h_{lki}\alpha_{jl}.$$
\end{prop}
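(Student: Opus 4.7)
The plan is to derive the general second structure equations by a direct block-matrix calculation, then specialise by pulling back to the Lagrangian-adapted frame bundle and applying Cartan's lemma twice.

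First, I would start from the Maurer--Cartan equation $\d\phi+\phi\w\phi=0$. Writing $\phi$ in the block form displayed before the proposition, the identity decomposes into four types of entry (the top row, the left column, the diagonal $3\times 3$ blocks, and the off-diagonal $3\times 3$ blocks). Since these blocks depend polynomially on $\omega,\eta,\alpha,\beta$ and on the bracket map $[\,\cdot\,]$, each block is a routine wedge-product computation. The formulae for $\d\omega$ and $\d\eta$ come from the top and left blocks; those for $\d\alpha$ and $\d\beta$ come from the symmetric and antisymmetric parts of the diagonal block identity, using that $[\omega]\w[\omega]$ and $[\eta]\w[\eta]$ contribute the $\omega\w\omega^{\rm T}$ and $\eta\w\eta^{\rm T}$ terms after identifying $\Skew_3(\R)\cong\R^3$ via $[\,\cdot\,]$. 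This is purely mechanical.

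Next, for the adapted form on $L$, I would pull back along the Lagrangian adaptation, where $\eta\equiv 0$. Equation \eq{secondLa} is then immediate. The equation $\d\eta=0$ forces $\beta\w\omega=0$; since $\omega_1,\omega_2,\omega_3$ are independent and $\beta$ is symmetric and traceless, Cartan's lemma produces a fully symmetric $h_{ijk}$ with $h_{iik}=0$ and $\beta_{ij}=2h_{ijk}\omega_k$, which is \eq{secondLb}; the identification of $h_{ijk}$ as the coefficients of $h_L$ in Definition~\ref{funcubic} follows by inspecting the normal part of \eq{firstL}b. Equations \eq{secondLc} and \eq{secondLd} then drop out of the general formulae by setting $\eta=0$.

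For \eq{Ga}, I would compute
$$
\d(\alpha+[\omega])=\d\alpha+[\d\omega],
$$
substitute \eq{secondLa} and \eq{secondLc}, and then reorganise using the identity $(\alpha+[\omega])\w(\alpha+[\omega])=\alpha\w\alpha+\alpha\w[\omega]+[\omega]\w\alpha+[\omega]\w[\omega]$ together with the algebraic fact $[\omega]\w[\omega]=2\omega\w\omega^{\rm T}-2|\omega|^2\Id$ (or its matrix analogue) to combine the $3\omega\w\omega^{\rm T}$ term with the commutator contributions. Since $\beta\w\beta=4h\omega\w h\omega$ by \eq{secondLb}, collecting terms yields \eq{Ga}. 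For \eq{Co}, I would differentiate $\beta=2h\omega$, use \eq{secondLd} and \eq{secondLa}, and regroup to obtain
$$
\big(\d h+\big(\big(h\alpha+\tfrac{1}{2}h[\omega]\big)\big)\big)\w\omega=0.
$$
A second application of Cartan's lemma, together with the fact that $h_{ijk}$ is fully symmetric (so the left-hand side is a symmetric-in-$(ijk)$ matrix of 1-forms), produces a fully symmetric $H_{ijkl}$ with the expected shape.

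The main obstacle is the bookkeeping in \eq{Ga}: keeping track of the commutator terms $[\omega]\w[\omega]$, $\alpha\w[\omega]+[\omega]\w\alpha$, and the ``trivial'' $3\omega\w\omega^{\rm T}$ so that they collapse correctly into $4\omega\w\omega^{\rm T}$ on the right-hand side, and ensuring the signs are consistent with the sign conventions of Proposition~\ref{s2prop1}. The second application of Cartan's lemma in \eq{Co} requires one to verify that the bracketed symmetrisation $\big(\big(h\alpha+\tfrac{1}{2}h[\omega]\big)\big)$ is indeed the unique correction that makes the expression wedge to zero with $\omega$; this is where the symmetry of $h$ must be used carefully.
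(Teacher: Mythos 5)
Your proposal is correct and follows essentially the same route as the paper, which simply reads the four block entries off the Maurer--Cartan equation $\d\phi+\phi\w\phi=0$, pulls back to the adapted bundle where $\eta=0$ (so $\d\eta=0$ gives $\beta\w\omega=0$), and applies Cartan's lemma once to get $\beta=2h\omega$ and again to get \eq{Co}. The only point to fix is your quoted algebraic identity: in this graded setting $[\omega]\w[\omega]=-\omega\w\omega^{\rm T}$ (the $|\omega|^2\Id$ term is absent because $\omega_i\w\omega_i=0$), and combining this with $[(\alpha+[\omega])\w\omega]=(\alpha+[\omega])\w[\omega]+[\omega]\w(\alpha+[\omega])$ makes the curvature terms collapse to $\beta\w\beta+4\omega\w\omega^{\rm T}$, i.e.~\eq{Ga}, exactly as you describe.
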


Here, $h_{ijk}$ defines the fundamental cubic $C_L$ of $L$ as in Definition \ref{funcubic}. 
% The equation \eq{Co} can be interpreted as $h_{ijk;l}=H_{ijkl}$; i.e.~the coefficients of $\nabla^L C_L$ are given by the
%tensor $H$.  The factor of $\frac{1}{2}$ may appear mysterious: the point is that from \eq{secondLd} we get a 
%term $h_{lij}\omega_{kl}$, which is already symmetric in $i$ and $j$, but must be symmetrised in the transposition 
%of $j$ and $k$.  
 Recalling that the connection 1-form of the metric $g_L$ is $\alpha+[\omega]$, 
 the 
 equations \eq{Ga} and \eq{Co} can be interpreted as Gauss and
Codazzi-like equations.  Explicitly, if $R_{ijkl}=\text{Riem}(g_L)$, \eq{Ga} and \eq{Co} are equivalent to:
\begin{align*}
R_{ijkl}=\sum_q (h_{ikq}h_{jlq}-h_{ilq}h_{jkq})+\delta_{ik}\delta_{jl}-\delta_{il}\delta_{jk}\!\quad\!\text{and}\!\quad\!
\nabla^L C_L\in \Gamma(S^4T^*L).
\end{align*}
These conditions are necessary and sufficient for $(L,g_L)$ to be isometrically embedded as a 
Lagrangian submanifold of $\mathcal{S}^6$ with fundamental cubic $C_L$.  

%\medskip

%For use later we record the following formulae, using the fact that 
%$\omega_{ik}\wedge\omega_{kj}=-\omega_i\wedge\omega_j$:
%\begin{align*}
%\d\omega&=-(\alpha+[\omega])\w\omega+(\beta+\eta)\w\eta;\\
%\d[\omega]&=-[\omega]\w\alpha-\alpha\w\omega+[\eta]\w\beta-\beta\w[\eta]-2[\omega]\w[\omega]
%+2[\eta]\w[\eta];\\
%\d\eta&=-(\beta-\eta)\w\omega-(\alpha-\omega)\w\eta;\\
%\d[\eta]&=-2\omega\wedge\eta^{\text{T}}-2\eta\wedge\omega^{\text{T}}+\beta\wedge[\omega]-[\omega]\wedge\beta
%-\alpha\wedge[\eta]-[\eta]\wedge\alpha;\\
%\d\alpha&=-\alpha\w\alpha+\beta\wedge\beta-3[\omega]\wedge[\omega]-3[\eta]\wedge[\eta];\\
%\d\beta&=2\eta\wedge\omega^{\text{T}}-2\omega\wedge\eta^{\text{T}}-\alpha\wedge\beta-\beta\wedge\alpha
%-[\omega]\wedge[\eta]+[\eta]\wedge[\omega].
%\end{align*}
%Moreover:
%\begin{align*}
%\d(\alpha+[\omega])&=-(\alpha+[\omega])\wedge(\alpha+[\omega])+(\beta+[\eta])\wedge(\beta-[\eta])-4[\omega]\wedge[\omega];
%\\
%\d(\alpha-[\omega])&=-(\alpha-[\omega])\wedge(\alpha-[\omega])+(\beta-[\eta])\wedge(\beta+[\eta])-4[\eta]\wedge[\eta];\\
%\d(\beta+[\eta])&=-4\omega\wedge\eta^{\text{T}}-(\alpha+[\omega])\wedge(\beta+[\eta])
%-(\beta+[\eta])\wedge(\alpha-[\omega]);\\
%\d(\beta-[\eta])&=4\eta\wedge\omega^{\text{T}}-(\beta-[\eta])\wedge(\alpha+[\omega])-(\alpha-[\omega])\wedge(\beta-[\eta]).
%\end{align*}

\subsection[Pseudoholomorphic curves]{Pseudoholomorphic curves}\label{pholo}

%We again begin with some common notation.  
Recall that, since we have an almost complex structure $J$ on $\mathcal{S}^6$ we can define, at each $p\in\mathcal{S}^6$, 
$T^{1,0}_p\mathcal{S}^6=\{\bfv\in T_p\mathcal{S}^6\otimes_\R\C\,:\,J_p\bfv=i\bfv\}$.  A unitary basis for 
$T_p\mathcal{S}^6$ is then really a complex basis for $T^{1,0}_p\mathcal{S}^6$ whose elements are orthogonal and of length 
$\frac{1}{\sqrt{2}}$.

Let $\bfu:\Sigma\rightarrow\mathcal{S}^6$ be a pseudoholomorphic curve and let $g_{\Sigma}$ be the induced metric on $\Sigma$.
 %Let $\nabla^{\Sigma}$ be the Levi--Civita connection of $g_{\Sigma}$ and let $h_{\Sigma}$ be the second fundamental form.  
 At each point $p\in\Sigma$, we can decompose $T^{1,0}_p\mathcal{S}^6|_\Sigma=T_p^{1,0}\Sigma\oplus N_p\Sigma$.  Let $\bff_1(p)$ span the holomorphic tangent space $T^{1,0}_p\Sigma$ 
and let $\theta_1(p)$ be the dual 1-form.  
% Over $\Sigma$, the (full) holomorphic normal bundle $N\Sigma$ can be decomposed into 
%holomorphic line bundles $N_1\Sigma$ and $N_2\Sigma$, 
%called the first and second normal bundles. 
%
 Let $N_p\Sigma=\langle\bff_2(p),\bff_3(p)\rangle_{\C}$ 
and let $\{\theta_2(p),\theta_3(p)\}$ be the obvious dual 1-forms.  
We can further choose $\{\bff_1(p),\bff_2(p),\bff_3(p)\}$ to be a unitary frame for $T_p\mathcal{S}^6$ at each $p\in\Sigma$, 
and thus the $\theta_i(p)$ define a dual unitary coframe.   

Recall that the second fundamental form $h_\Sigma$ of $\Sigma$, when evaluated on tangent vectors, takes values in the normal 
bundle of $\Sigma$ in $\mathcal{S}^6$.  Thus, at every non-totally geodesic point $p\in\Sigma$, we can define $N_1\Sigma|_p$ 
to be the subspace of $N_p\Sigma$ in which $h_\Sigma$ takes values when evaluated on elements of $T^{1,0}_p\Sigma$, and 
further define $N_2\Sigma|_p$ to be the subspace of $N_p\Sigma$ orthogonal to $N_1\Sigma|_p$.  Thus, if $\Sigma$ 
is not totally geodesic, $N\Sigma$ decomposes into 
holomorphic line bundles $N_1\Sigma$ and $N_2\Sigma$, called the \emph{first} and \emph{second} normal bundles.  In this case, 
 one could adapt frames so that $\bff_2$ spans $N_1\Sigma$ and 
$\bff_3$ spans $N_2\Sigma$.  %Moreover, this adaptation can always be made locally if $\Sigma$ is not totally geodesic.  
However, we refrain from making this choice in general.
%, the first normal space $(N_1)_p\Sigma$ is spanned by $\bff_2(p)$ and the second normal (or binormal) space 
%$(N_2)_p\Sigma$ is spanned by $\bff_3(p)$, and let $\{\theta_1(p),\theta_2(p),\theta_b(p)\}$ be the dual coframe.  

\medskip

We shall need the following cross products, which can be calculated by taking explicit imaginary octonionic 
representatives for $\bfu$, $\bff_1$, $\bff_2$ and $\bff_3$:
\begin{subequations}\label{times}
\begin{gather}
\bff_1\times\bar{\bff}_1=\bff_2\times\bar{\bff}_2=\bff_3\times\bar{\bff}_3=\frac{i}{2}\bfu;\\
\bff_2\times\bff_3=\bar{\bff}_1;\qquad\bff_3\times\bff_1=\bar{\bff}_2;\qquad\bff_1\times\bff_2=\bar{\bff}_3.
\end{gather}
\end{subequations}

%Given an arbitrary unit tangent vector on a non-totally geodesic pseudoholomorphic curve $\Sigma$, 
We can explicitly define a unitary 
framing for $T\mathcal{S}^6|_{\Sigma}$ in a neighbourhood $U\subseteq\Sigma$ of each non-totally geodesic point $p\in\Sigma$
 as follows.  Let $\bft_1$ be a unit tangent vector on $U$ and let $\bft_2=J^{\Sigma}\bft_1$, 
where $J^{\Sigma}$ is the almost complex structure on $\Sigma$. 
 Notice that $|h|=\|h_{\Sigma}(\bft,\bft)\|$ is independent of the choice of unit tangent vector $\bft$ on $U$.
  Therefore, identifying the tangent vectors in $\Sigma$ with their push-forwards in $T\mathcal{S}^6|_{\Sigma}$, define:
\begin{subequations}\label{pholoframe}
\begin{gather}
%\bft_1=\bfu_1;\qquad\bft_2=\bfu_2;\\
\bfn_1=\frac{h_{\Sigma}(\bft_1,\bft_1)}{|h|};\qquad
\bfn_2=\frac{h_{\Sigma}(\bft_1,\bft_2)}{|h|};\\
\bfb_1=\bft_1\times\frac{h_{\Sigma}(\bft_1,\bft_1)}{|h|};\qquad\bfb_2=\bft_2\times\frac{h_{\Sigma}(\bft_2,\bft_2)}{|h|};\\
\bft=\frac{1}{2}(\bft_1-i\bft_2);\qquad\bfn=\frac{1}{2}(\bfn_1-i\bfn_2);\qquad\bfb=\frac{1}{2}(\bfb_1-i\bfb_2).
\end{gather}
\end{subequations}
In this way, $\bft$ spans $T^{1,0}U$, $\bfn$ spans $N_1U$ and 
 $\bfb$ spans $N_2U$.

\medskip

From \cite[Proposition 2.3 \& $\S$4]{Bryant1},  we may
use a complex matrix presentation of $\g2$ and write the map $\gg:\GGG_2\rightarrow
\GL(7,\C)$ as $\gg=(\bfu\;\bff\;\bar{\bff})$, where $\bff=(\bff_1\;\bff_2\;\bff_3)\in\text{M}_{7\times 3}(\C)$, 
to derive %alternative 
 the structure equations of $\GGG_2$.  
 %to those presented in Propositions \ref{s2prop1} and \ref{s2prop2}.  
 Over $\Sigma$, we recognise $\bfu$ as a point in $\Sigma$ and 
$\bff$ as a unitary frame for $T\mathcal{S}^6|_{\Sigma}$.  Thus, on the adapted frame bundle over $\Sigma$, we can set  $\theta_2=\theta_3=0$.  We deduce the following result. %\cite[c.f.~Proposition 2.3 \& $\S$4]{Bryant1}.

\begin{prop} For a $3\times 1$ vector of complex-valued 1-forms $\theta=(\theta_1,\theta_2,\theta_3)^{\rm T}$ and a 
$3\times 3$ skew-Hermitian matrix of 1-forms $\kappa=(\kappa_{ij})$ satisfying $\Tr \kappa=0$, 
the structure equations of $\GGG_2$ can be written as:
\begin{subequations}\label{pholoeqgen}
\begin{align}
\d\bfu&=-2i\bff\theta+2i\bar{\bff}\bar{\theta};\\
\d\bff&=-i\bfu\bar{\theta}^{\rm T}+\bff\kappa+\bar{\bff}[\theta];\\
\d\theta&=-\kappa\w\theta+[\bar{\theta}]\w\bar{\theta};\\
\d\kappa&=-\kappa\w\kappa+3\theta\w\bar{\theta}^{\rm T}-\theta^{\rm T}\w\bar{\theta}\Id_3,
\end{align}
\end{subequations}
where $\Id_3$ is the $3\times 3$ identity matrix.

On the adapted frame bundle of $\Sigma$, there exist holomorphic functions $k_2$ and $k_3$ such that $\kappa_{21}=k_2\theta_1$ 
and $\kappa_{31}=k_3\theta_1$, and the structure equations become:
\begin{subequations}\label{pholoeq}
\begin{align}
\d\bfu&=-2i\bff_1\theta_1+2i\bar{\bff}_1\bar{\theta}_1;\\
\d\bff_1&=-i\bfu\bar{\theta}_1-\bff_1(\kappa_{22}+\kappa_{33})+k_2\bff_2\theta_1+k_3\bff_3\theta_1;\\
%\d\bff_1&=-i\bfu\bar{\theta}_1+\bff_1\kappa_{11}+k_2\bff_2\theta_1+k_3\bff_3\theta_1;\\
%\d\bff_2&=-\bar{k}_2\bff_1\bar{\theta}_1+\bff_2\kappa_{22} -\bff_3\bar{\kappa}_{23}-\bar{\bff}_3\theta_1;\\
\d\bff_2&=-\bar{k}_2\bff_1\bar{\theta}_1+\bff_2\kappa_{22}+\bff_3\kappa_{32}-\bar{\bff}_3\theta_1;\\
%\d\bff_3&=-\bar{k}_3\bff_1\bar{\theta}_1+\bff_2\kappa_{23}+\bff_3\kappa_{33}+\bar{\bff}_2\theta_1;\\
\d\bff_3&=-\bar{k}_3\bff_1\bar{\theta}_1-\bff_2\bar{\kappa}_{32}+\bff_3\kappa_{33}+\bar{\bff}_2\theta_1;\label{pholoeq4}\\
\d\theta_1&=(\kappa_{22}+\kappa_{33})\w\theta_1;\label{pholoeq5}\\
%\d\theta_1&=-\kappa_{11}\w\theta_1;\label{pholoeq5}\\
%\d\kappa_{11}&=\left(2-|k_2|^2-|k_3|^2\right)\theta_1\w\bar{\theta}_1;\label{pholoeq6}\\
%\d\kappa_{22}&=\left(|k_2|^2-1\right)\theta_1\w\bar{\theta}_1+\kappa_{23}\w\bar{\kappa}_{23};\\
\d\kappa_{22}&=\left(|k_2|^2-1\right)\theta_1\w\bar{\theta}_1-\kappa_{32}\w\bar{\kappa}_{32};\\
%\d\kappa_{33}&=\left(|k_3|^2-1\right)\theta_1\w\bar{\theta}_1-\kappa_{23}\w\bar{\kappa}_{23};\\
\d\kappa_{33}&=\left(|k_3|^2-1\right)\theta_1\w\bar{\theta}_1+\kappa_{32}\w\bar{\kappa}_{32};\label{pholoeq8}\\
%\d(k_2\theta_1)&=\big(k_2(\kappa_{11}-\kappa_{22})-k_3\kappa_{23}\big)\w\theta_1;\\
\d(k_2\theta_1)&=-\big(k_2(2\kappa_{22}+\kappa_{33})-k_3\bar{\kappa}_{32}\big)\w\theta_1;\\
%\d(k_2\theta_1)&=\big(k_2(\kappa_{11}-\kappa_{22})+k_3\bar{\kappa}_{32}\big)\w\theta_1;\\
%\d(k_3\theta_1)&=\big(k_3(\kappa_{11}-\kappa_{33})+k_2\bar{\kappa}_{23}\big)\w\theta_1;\label{pholoeq10}\\
\d(k_3\theta_1)&=-\big(k_3(\kappa_{22}+2\kappa_{33})+k_2\kappa_{32}\big)\w\theta_1;\label{pholoeq10}\\
%\d\kappa_{23}&=k_2\bar{k}_3\theta_1\w\bar{\theta}_1+\kappa_{23}\w(\kappa_{22}-\kappa_{33}).
\label{pholoeq11}\d\kappa_{32}&=\bar{k}_2k_3\theta_1\w\bar{\theta}_1+(\kappa_{22}-\kappa_{33})\w\kappa_{32}.
\end{align}
\end{subequations}
\end{prop}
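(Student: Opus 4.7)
The plan is to derive the general equations \eq{pholoeqgen} from the Maurer--Cartan structure of $\GGG_2$ in a complex matrix presentation adapted to $J$, and then to specialise to the adapted frame bundle over $\Sigma$ to extract \eq{pholoeq}.

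For the general equations, following \cite[\S 4]{Bryant1}, I write $\gg = (\bfu\;\bff\;\bar{\bff})$ with $\bff$ a unitary frame for $T^{1,0}\mathcal{S}^6$. Decomposing the Maurer--Cartan form $\phi = \gg^{-1}\d\gg$ in the block pattern dictated by $\Im\O\otimes\C = \langle\bfu\rangle\oplus T^{1,0}\oplus T^{0,1}$, one reads off the column $\theta$ of $(1,0)$-type coframe entries and the traceless skew-Hermitian matrix $\kappa$ representing the $\mathfrak{su}(3)$ part; the off-diagonal blocks coupling $T^{1,0}$ and $T^{0,1}$ are the cross-product matrices $[\theta]$ and $[\bar\theta]$, whose entries are dictated by the relations in \eq{times}. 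Then $\d\gg = \gg\phi$ gives the equations for $\d\bfu$ and $\d\bff$, while expanding $\d\phi = -\phi\wedge\phi$ block-by-block produces the equations for $\d\theta$ and $\d\kappa$, with the terms $[\bar\theta]\wedge\bar\theta$ and $-\theta^{\rm T}\wedge\bar\theta\,\Id_3$ arising respectively from the off-diagonal and trace-normalisation parts of the expansion.

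To specialise to $\Sigma$, I adapt the frame so that $\bff_1$ spans $T^{1,0}\Sigma$; then $\theta_2 = \theta_3 = 0$ on the pulled-back bundle, and in particular $\bar\theta_2 = \bar\theta_3 = 0$. Substituting into $\d\theta = -\kappa\wedge\theta + [\bar\theta]\wedge\bar\theta$, the cross-product term vanishes identically, and the second and third rows reduce to $\kappa_{21}\wedge\theta_1 = \kappa_{31}\wedge\theta_1 = 0$. Cartan's lemma yields complex-valued functions $k_2, k_3$ with $\kappa_{21} = k_2\theta_1$ and $\kappa_{31} = k_3\theta_1$, and the first row combined with $\Tr\kappa = 0$ gives $\d\theta_1 = (\kappa_{22}+\kappa_{33})\wedge\theta_1$. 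The remaining equations for $\d\bff_j$, $\d\kappa_{22}$, $\d\kappa_{33}$, $\d\kappa_{32}$, and $\d(k_j\theta_1)$ drop out by direct substitution into the relevant blocks of \eq{pholoeqgen}, using the skew-Hermitian identities $\kappa_{12} = -\bar k_2\bar\theta_1$, $\kappa_{13} = -\bar k_3\bar\theta_1$, $\kappa_{23} = -\bar\kappa_{32}$, and careful expansion of the matrix entries $(\kappa\wedge\kappa)_{ij}$.

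The holomorphicity of $k_2$ and $k_3$ is the subtlest point: expanding $\d(k_j\theta_1) = \d k_j\wedge\theta_1 + k_j\d\theta_1$ and equating with the right-hand sides of \eq{pholoeq10} and its analogue, Cartan's lemma determines $\d k_j$ modulo $\theta_1$, and the resulting expression lies in the $(1,0)$-type span on $\Sigma$, so $k_2$ and $k_3$ define holomorphic sections of the appropriate line bundles over $\Sigma$. The main obstacle throughout is bookkeeping: signs, Hermitian conjugates, and trace-corrections in the block expansion of $\d\phi = -\phi\wedge\phi$ are delicate, but each individual step is mechanical once the complex presentation of $\mathfrak{g}_2$ is fixed.
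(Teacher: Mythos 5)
Your proposal follows essentially the same route as the paper, which itself derives the general equations from Bryant's complex matrix presentation of $\mathfrak{g}_2$ with $\gg=(\bfu\;\bff\;\bar{\bff})$, sets $\theta_2=\theta_3=0$ on the adapted frame bundle over $\Sigma$, and obtains $\kappa_{21}=k_2\theta_1$, $\kappa_{31}=k_3\theta_1$ via Cartan's lemma, the remaining equations being direct substitution into the Maurer--Cartan relations. The only caveat is that your holomorphicity argument for $k_2,k_3$ is stated loosely (the expression for $\d k_j$ modulo $\theta_1$ involves connection and $\kappa_{32}$ terms, so the precise statement is that of a holomorphic section in the sense of \cite[Lemma 4.4]{Bryant1}), but this matches the level of detail the paper itself supplies by citation.
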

\noindent
 
We can interpret the functions $(k_2,k_3)$ as the \emph{second fundamental form} of the 
pseudoholomorphic curve.  Indeed, by \cite[Lemma 4.4]{Bryant1}, $(k_2,k_3)=0$ if and only if $\Sigma$ lies in a totally geodesic 
$\mathcal{S}^2$, which is the intersection of a linear associative 3-plane in $\Im\O$ with $\mathcal{S}^6$.  We can observe this ourselves 
using \eq{pholoeq5}-\eq{pholoeq8}.  

Suppose that $\Sigma$ is non-totally geodesic. If we adapt frames further, as suggested earlier, so that 
$\bff_2$ and $\bff_3$ span $N_1\Sigma$ and $N_2\Sigma$, we find that $k_3=0$.  Moreover, by
\cite[Lemma 4.5]{Bryant1}, there exists a holomorphic function $k_1$ such that $\kappa_{32}=k_1\theta_1$.  
Following \cite{Bryant1}, we call $k_1$ the 
\emph{torsion} of $\Sigma$.  The pseudoholomorphic curves with 
\emph{null-torsion} (also called \emph{superminimal} in the language of integrable systems) exhibit a rich geometry: in fact, every Riemann surface can be embedded as a null-torsion curve in $\mathcal{S}^6$ 
by \cite[Theorem 4.10]{Bryant1}.  It also clear from \eq{pholoeq} that $\Sigma$ lies in a 
totally geodesic $\mathcal{S}^5$ if and only if the torsion is constant and satisfies $|k_1|=1$.  In this case, $\Sigma\subseteq\mathcal{S}^5$ 
is a \emph{minimal Legendrian} surface.

By \cite[Lemma 4.3]{Bolton}, pseudoholomorphic curves in $\mathcal{S}^6$ split into four types:
linearly full and null-torsion; linearly full with non-zero torsion; linearly full in a totally geodesic 
$\mathcal{S}^5$ (and necessarily with non-zero torsion); and totally geodesic.

\section[Ruled and quasi-ruled Lagrangian submanifolds]{Ruled and quasi-ruled Lagrangian\\ submanifolds}\label{s4}

We consider Lagrangians in $\mathcal{S}^6$ that are ruled by circles of constant radius.  
However, we reserve the notation `ruled' for the case where the circles are geodesics.

\begin{dfn}
Let $L$ be a 3-dimensional submanifold of $\mathcal{S}^6$ and let $\lambda\in(0,1]$ be constant.  
A \emph{$\lambda$-ruling} of $L$ is a pair $(\Sigma,\pi)$ where $\pi:L\rightarrow\Sigma$ is a smooth fibration of $L$  
over a 2-manifold $\Sigma$ by oriented circles of radius $\lambda$ in $\mathcal{S}^6$.  
 % in $\mathcal{S}^6$ for all $p\in\Sigma$.
We say that $(L,\Sigma,\pi)$ is \emph{ruled} if $(\Sigma,\pi)$ is a 1-ruling of $L$, and that it is 
\emph{quasi-ruled} if $(\Sigma,\pi)$ is a $\lambda$-ruling of $L$ for $\lambda\in(0,1)$.
\end{dfn}

We begin by describing the `second order' condition on a Lagrangian corresponding to the ruled or quasi-ruled condition.

\begin{lemma}\label{rcubic}
Let $L$ be a Lagrangian in $\mathcal{S}^6$ with a $\lambda$-ruling and let $C_L$ be its fundamental cubic.  
There exists an orthonormal frame $\{\bfe_1,\bfe_2,\bfe_3\}$ of $L$, with dual coframe $\{2\omega_1,2\omega_2,2\omega_3\}$, 
 such that $\bfe_1$ is the direction of the $\lambda$-ruling and 
\begin{align*}
&C_L=C(r,s,a,b)\\
&=r\omega_1(2\omega_1^2-3\omega_2^2-3\omega_3^2)+3s\omega_1(\omega_2^2-\omega_3^2)+
a\omega_2(\omega_2^2-3\omega_3^2)+b\omega_3(3\omega_2^2-\omega_3^2),
\end{align*}
where
 $r=\frac{4}{\lambda}\,\sqrt{1-\lambda^2}$ 
and $s,a,b$ are functions.  Moreover, if $s=0$ we can choose $b=0$.
%\begin{itemize}
%\item[\emph{(a)}] If $L$ is ruled, there exists an orthonormal coframe $\{2\omega_1,2\omega_2,2\omega_3\}$ on $L$ such that 
%$$C_L=C^R(s,a,b):=3s\omega_1(\omega_2^2-\omega_3^2)+a\omega_2(\omega_2^2-3\omega_3^2)+b\omega_3(3\omega_2^2-\omega_3^2),$$
%where $s$, $a$ and $b$ are functions on the frame bundle of $L$.
%\item[\emph{(b)}] If $L$ is quasi-ruled, there exists an orthonormal coframe $\{2\omega_1,2\omega_2,2\omega_3\}$ on $L$ such that
%$$C_L=C^{QR}(r,s,a,b):=r\omega_1(2\omega_1^2-3\omega_2^2-3\omega_3^2)+C^R(s,a,b)$$
%for a constant $r$ and functions $s$, $a$ and $b$ on the frame bundle of $L$.
%\end{itemize}
%In both cases, if $s=0$ then the coframe can be further chosen so that $b=0$.
\end{lemma}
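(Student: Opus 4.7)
The approach is to adapt the frame to the ruling direction, then use the two defining properties of a circle of radius $\lambda$ in $\mathcal{S}^6$---fixed $\mathcal{S}^6$-geodesic curvature and planarity in $\R^7$---along with the structure equations and the residual $\SO(2)$ frame freedom to extract the stated normal form of $C_L$.

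First I would take $\bfe_1$ to be the unit tangent to the ruling circle at each point of $L$ and complete it to a local orthonormal frame $\{\bfe_1,\bfe_2,\bfe_3\}$ of $L$; the only remaining frame freedom is the $\SO(2)$-rotation in the $(\bfe_2,\bfe_3)$-plane. Using \eqref{firstL}, the $\mathcal{S}^6$-curvature vector of the ruling circle decomposes as
$$\mathbf{k} := \nabla^{\mathcal{S}^6}_{\bfe_1}\bfe_1 \;=\; a_{21}\bfe_2 + a_{31}\bfe_3 \;+\; h_{111}J\bfe_1 + h_{112}J\bfe_2 + h_{113}J\bfe_3,$$
with $a_{j1}=\alpha_{j1}(\bfe_1)$ the components of $\nabla^L_{\bfe_1}\bfe_1$ and the $h_{11k}$ the components of $h_L(\bfe_1,\bfe_1)$. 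A small circle of radius $\lambda$ in the unit 6-sphere has $\mathcal{S}^6$-geodesic curvature of magnitude $\sqrt{1-\lambda^2}/\lambda$, so $|\mathbf{k}|^2=(1-\lambda^2)/\lambda^2$; and its planarity in $\R^7$ (i.e.\ $\dddot\gamma=-(1/\lambda^2)\dot\gamma$) translates, via $\ddot\gamma=-\bfx+\mathbf{k}$, into $\nabla^{\mathcal{S}^6}_{\bfe_1}\mathbf{k}=-|\mathbf{k}|^2\bfe_1$.

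Expanding this last identity in the adapted frame using \eqref{firstL} and matching coefficients, the $J\bfe_1$ component reduces to $\bfe_1(h_{111})=0$---so $h_{111}$ is constant along each ruling circle---while the $\bfe_j$ and $J\bfe_j$ components for $j=2,3$ produce four further along-circle relations coupling $\bfe_1$-derivatives of $a_{21},a_{31},h_{112},h_{113}$ to algebraic combinations of the connection and cubic coefficients. The central step, which I expect to be the main obstacle, is to combine these along-ruling relations with the transverse Codazzi equation \eqref{Co} (the total symmetry of $\nabla^L h$) and, if necessary, \eqref{Ga} to conclude
$$a_{21}=a_{31}=h_{112}=h_{113}=0\quad\text{everywhere on }L.$$
Geometrically this says the ruling circles are geodesics of $L$ and that $h_L(\bfe_1,\bfe_1)$ points along $J\bfe_1$; combined with $|\mathbf{k}|^2=(1-\lambda^2)/\lambda^2$, this gives $h_{111}^2=(1-\lambda^2)/\lambda^2$, and orienting the ruling so that $h_{111}\geq 0$ yields $h_{111}=r/4$ with $r=4\sqrt{1-\lambda^2}/\lambda$. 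A naive $\SO(2)$-count does not suffice on its own: the target normal form imposes three frame conditions ($h_{112}=h_{113}=h_{123}=0$) but only one is frame-adjustable, so two must be forced by the ruling-plus-Lagrangian geometry.

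Granting this, the trace-free relations $h_{11k}+h_{22k}+h_{33k}=0$ for $k=1,2,3$ let us define $s$ by $h_{122}=(s-r)/8$ (forcing $h_{133}=-(s+r)/8$), $a$ by $h_{222}=a/8=-h_{233}$, and $b$ by $h_{223}=b/8=-h_{333}$, matching $C(r,s,a,b)$ up to a possible $h_{123}\omega_1\omega_2\omega_3$-term. The $\SO(2)$-rotation by $\phi$ transforms $h_{123}$ to $\cos(2\phi)h_{123}-\tfrac{1}{2}\sin(2\phi)(h_{122}-h_{133})$, so whenever $s\neq 0$ a unique $\phi$ modulo $\pi/2$ annihilates $h_{123}$. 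When $s=0$, $h_{122}=h_{133}$ and $h_{123}$ is rotation-invariant (so must already vanish by the previous step); the residual freedom now rotates $(a,b)$ by $3\phi$, which is used to set $b=0$, completing the normalization.
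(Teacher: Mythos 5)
Your overall strategy follows the paper's: adapt the frame so that $\bfe_1$ is tangent to the ruling, use the constant radius and planarity of the fibres to pin down $r$, and use the residual $\SO(2)$ rotation about $\bfe_1$ to remove $h_{123}$ (and $b$ when $s=0$); the trace-free bookkeeping and the weight-2/weight-3 behaviour under rotation at the end are correct. However, there is a genuine gap, and it sits exactly at the lemma's real content: you never prove that $a_{21}=a_{31}=h_{112}=h_{113}=0$, i.e.\ that the ruling circles are geodesics of $L$ and that $h_L(\bfe_1,\bfe_1)$ is proportional to $J\bfe_1$; you explicitly flag this as ``the main obstacle'' and then proceed with ``Granting this''. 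Everything that distinguishes the lemma from the generic pointwise normal form $C(r,s,a,b)$ (which exists on any Lagrangian by the result of Vrancken \cite{Vrancken3} quoted in $\S$\ref{s6}) hangs on that step: in particular the formula $r=\frac{4}{\lambda}\sqrt{1-\lambda^2}$ only follows once the tangential part $\nabla^L_{\bfe_1}\bfe_1$ of the fibre's curvature vector $\mathbf{k}$ is known to vanish, since otherwise $|\mathbf{k}|^2=(1-\lambda^2)/\lambda^2$ is shared between $|\nabla^L_{\bfe_1}\bfe_1|^2$ and $|h_L(\bfe_1,\bfe_1)|^2$ and does not determine $h_{111}$. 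Note also that the vanishing is not a single-fibre fact: a circle of radius $\lambda<1$ inside the totally geodesic $\mathcal{S}^3$ has $h_L(\bfe_1,\bfe_1)=0$ and non-zero tangential curvature, so the conclusion must exploit the whole fibration (via the transverse structure equations \eq{secondL}/\eq{Co}, as you anticipate) together with the Lagrangian condition; outlining that this ``should'' work is not a proof. The paper itself takes $h_L(\bfe_1,\bfe_1)=rJ\bfe_1$ with $r$ constant as an immediate consequence of the $\lambda$-ruling and then reads off the $r$--$\lambda$ relation from \eq{firstL} restricted to a fibre, so you have correctly located where the work lies, but your submission does not supply it.

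A secondary inaccuracy: your claim that the $J\bfe_1$-component of $\nabla^{\mathcal{S}^6}_{\bfe_1}\mathbf{k}=-|\mathbf{k}|^2\bfe_1$ ``reduces to $\bfe_1(h_{111})=0$'' is not immediate from naive component extraction, because $\nabla^{\mathcal{S}^6}_{\bfe_1}$ of the frame fields $\bfe_2,\bfe_3,J\bfe_2,J\bfe_3$ contributes cross terms involving $a_{21},a_{31},h_{112},h_{113}$, which you have not yet shown to vanish. The statement is true, but the clean route is the nearly K\"ahler identity $(\nabla_X J)X=0$, which gives $\nabla^{\mathcal{S}^6}_{\bfe_1}(J\bfe_1)=J\mathbf{k}$ and hence $\frac{d}{ds}\langle\mathbf{k},J\bfe_1\rangle=\langle-|\mathbf{k}|^2\bfe_1,J\bfe_1\rangle+\langle\mathbf{k},J\mathbf{k}\rangle=0$; and even then this only gives constancy of $h_{111}$ along each fibre, not across fibres, which again must come from the unproven central step.
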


\begin{proof}
%If $L$ is ruled or quasi-ruled, then let $\bfe_1$ denote the  ruling direction.  If $L$ is ruled, then the second fundamental 
%form of $L$, $h_L$, necessarily satisfies $h_L(\bfe_1,\bfe_1)=0$.  Therefore, the fundamental cubic lies in the span of 
%$$\{6\omega_1\omega_2\omega_3,\,3\omega_1(\omega_2^2-\omega_3^2),\,\omega_2(\omega_2^2-3\omega_3^2),\,\omega_3(3\omega_2^2-\omega_3^2)
%\},$$
%where $\omega_1$ is dual to $\bfe_1$.  Since the subgroup of $\SO(3)$ preserving $\bfe_1$ is $\SO(2)$, we can transform the 
%coframe so that the coefficient of $\omega_1\omega_2\omega_3$ is zero as claimed.  If $L$ is quasi-ruled, then 
Since $L$ has a $\lambda$-ruling, $h_L(\bfe_1,\bfe_1)=rJ\bfe_1$ for some constant $r$.  Therefore, it is easy to 
see that $C_L=C(r,s,a,b)$.  To determine the relationship between $r$ and $\lambda$, one need only look at the first structure 
equations \eq{firstL} for $\omega_2=\omega_3=0$:
\begin{equation*}\label{circle}
\d\bfx =2\bfe_1\omega_1;\quad \d\bfe_1=-2\bfx\omega_1+\frac{r}{2}\,J\bfe_1\omega_1;\quad \d J\bfe_1=\frac{r}{2}\,\bfe_1.
\end{equation*}
These are the equations for a circle of radius $4(16+r^2)^{-\frac{1}{2}}$, which must necessarily equal $\lambda$.  The formula 
for $r$ follows.
If $s=0$, then we may use the $\SO(2)$ subgroup of $\SO(3)$ that fixes $\bfe_1$ to set $b=0$.
%If the coefficient $s$ of $\omega_1(\omega_2^2-\omega_3^2)$ is zero in $C_L$, then we still have 
%an $\SO(2)$ subgroup of freedom in transforming the frame, as we need only fix $\bfe_1$.  Thus, 
%we can ensure that the coefficient $b$ of $\omega_3(3\omega_2^2-\omega_3^2)$ is zero as well.
\end{proof}

In $\S$\ref{s1} we mentioned the relationship between ruled Lagrangians in $\mathcal{S}^6$ and \emph{2-ruled} 
coassociative 4-folds in $\Im\O$.  We now briefly define the latter.

\begin{dfn}
Let $N$ be a 4-dimensional submanifold of $\Im\O$.  A \emph{2-ruling} of $N$ is a pair $(\Sigma,\pi)$ 
%where $\Sigma$ is 
%a 2-dimensional manifold and $\pi:N\rightarrow\Sigma$ is a smooth map such that $\pi^{-1}(p)$ is an affine 2-plane 
%in $\Im\O$ for all $p\in\Sigma$.  
 where $\pi:N\rightarrow \Sigma$ is a smooth fibration of $N$ over a 2-manifold $\Sigma$ by oriented affine 2-planes in $\Im\O$.  
We say that $(N,\Sigma,\pi)$ is \emph{2-ruled} if $(\Sigma,\pi)$ is a 2-ruling of $N$.
\end{dfn}

We now make an elementary observation.

\begin{lemma}\label{r2rlemma}
A 4-dimensional coassociative cone in $\Im\O$ is 2-ruled if and only if its Lagrangian link in $\mathcal{S}^6$ is ruled.
\end{lemma}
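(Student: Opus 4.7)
\medskip

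\noindent\textbf{Proof plan.} The underlying geometric correspondence I would exploit is the bijection between oriented linear $2$-planes in $\Im\O$ and oriented geodesic circles in $\mathcal{S}^6$: a linear $2$-plane $V\subseteq\Im\O$ meets $\mathcal{S}^6$ in a great circle $V\cap\mathcal{S}^6$, while conversely every great circle in $\mathcal{S}^6$ arises this way. For the forward implication, assume $L$ is ruled via $\pi_L:L\to\Sigma$, so that each fibre $\pi_L^{-1}(\sigma)$ is a geodesic circle, i.e.\ $V_\sigma\cap\mathcal{S}^6$ for some linear $2$-plane $V_\sigma$. Define $\tilde\pi:C\to\Sigma$ by $\tilde\pi(x)=\pi_L(x/|x|)$; then $\tilde\pi^{-1}(\sigma)=\R_{>0}\cdot\pi_L^{-1}(\sigma)=V_\sigma\setminus\{0\}$, which is (a dense open subset of) an affine $2$-plane through the origin. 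This immediately gives a $2$-ruling of $C$.

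For the backward implication, suppose $\pi:C\to\Sigma$ is a $2$-ruling. The content is to show that every fibre $F_\sigma=\pi^{-1}(\sigma)$ is a \emph{linear} $2$-plane, since once this is established we may set $\pi_L:=\pi|_L$ and each $F_\sigma\cap\mathcal{S}^6$ is a great circle, giving the desired ruling of $L$. To see this, fix $p\in L$ and let $F=\pi^{-1}(\pi(p))$, which is an affine $2$-plane through $p$ contained in $C$. Using $T_pC=\R p\oplus T_pL$ and the $2$-dimensionality of the fibre, I would argue that the radial direction $p$ lies in $T_pF$, so that the whole ray $\R_{>0}p$ sits inside the affine $2$-plane $F$; then $F$ contains $0$ as a limit point and hence is linear. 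The cleanest route is to invoke the cone structure: the scaling $\mu_\lambda(x)=\lambda x$ preserves $C$, and any fibre of $\pi$ is mapped by $\mu_\lambda$ to an affine $2$-plane in $C$; combining this with the uniqueness of the $2$-plane foliation locally forces the distribution tangent to the fibres to be scaling-invariant, i.e.\ to contain the Euler vector field $p\mapsto p$.

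\medskip

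The main obstacle, as sketched, is justifying that the affine $2$-planes in the $2$-ruling of the cone must pass through the origin, since the definition of $2$-ruling only requires \emph{affine} $2$-planes. Everything else is a routine translation between cones and links. Once the scaling-compatibility of the fibration is in hand, the equivalence ``$F_\sigma$ linear $\Longleftrightarrow$ $F_\sigma\cap\mathcal{S}^6$ is a great circle of radius $1$'' yields the lemma.
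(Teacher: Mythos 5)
The paper gives no proof of this lemma at all (it is introduced as ``an elementary observation''), so your argument has to stand on its own. Your forward direction does: coning off a fibration of $L$ by oriented geodesic circles visibly fibres the cone by the linear $2$-planes those circles span. The problem is the converse, and you have correctly identified where the difficulty sits but not resolved it. Your key claim is that every fibre of a $2$-ruling of a coassociative cone is a \emph{linear} $2$-plane, justified by ``the uniqueness of the $2$-plane foliation''; this uniqueness is neither proved nor true. A linear coassociative $4$-plane $V\subseteq\Im\O$ is a coassociative cone, and writing $V=W\oplus W'$ as a sum of two $2$-planes gives a $2$-ruling of $V$ by the parallel affine planes $W+w'$, $w'\in W'$, of which only one passes through the origin. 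So a coassociative cone can carry $2$-rulings whose fibres are not linear, and the $2$-plane foliation is far from unique. (The lemma still holds in this example only because the link $\mathcal{S}^3=V\cap\mathcal{S}^6$ happens to be ruled by a \emph{different} family of geodesic circles, e.g.\ the Hopf fibration --- a ruling your construction does not produce.)

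Even setting the planar case aside, the scaling argument as sketched does not close the gap: granting that dilations permute the fibres, a dilation-invariant family of fibres could a priori consist of the parallel translates $tF$, $t>0$, of a single plane $F$ missing the origin, so invariance of the fibre distribution under scaling does not by itself force the Euler vector field into the fibres; one needs a further argument (a limiting analysis as $t\to 0$ at the vertex, or genuine input from coassociative geometry) or a case analysis excluding such configurations for non-planar cones. The reading consistent with how the paper uses the lemma --- in particular with the appeal to Fox's correspondence between $2$-ruled coassociative cones and surfaces in the Grassmannian of oriented \emph{linear} $2$-planes --- is that for cones the ruling planes are taken through the vertex, in which case the statement reduces to the evident bijection between fibrations of $L$ by oriented geodesic circles and fibrations of the cone by oriented linear $2$-planes, and both directions are immediate. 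As written, with arbitrary affine ruling planes allowed, your backward implication is not yet a proof.
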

 
By \cite[Proposition 7.2]{Fox2}, there is a correspondence between 2-ruled coassociative cones in $\Im\O$ and 
certain surfaces in the Grassmannian of oriented 2-planes in $\Im\O$, $\Gr_+(2,\Im\O)$.  Notice that $\Gr_+(2,\Im\O)$ is  
naturally isomorphic to the space $\mathcal{C}$ of oriented geodesic circles in $\mathcal{S}^6$:  simply identify
an oriented 2-plane in $\Im\O$ with its intersection with $\mathcal{S}^6$.  Therefore, a surface 
$\phi:\Sigma\rightarrow\mathcal{C}$ can be written as 
\begin{equation}\label{littlephi}
\phi(p)=\big(\bfv_1(p),\bfv_2(p)\big),
\end{equation}
where $\bfv_1,\bfv_2:\Sigma\rightarrow\mathcal{S}^6$ are everywhere orthogonal, and so define an oriented basis for a 2-plane at each point.
  We then associate a map $\Phi:\Sigma\times [0,2\pi)\rightarrow\mathcal{S}^6$ to $\phi$, whose image is a ruled 3-dimensional submanifold, in the obvious way:
\begin{equation}\label{bigphi}
\Phi(p,t)=\bfv_1(p)\cos t+\bfv_2(p)\sin t.
\end{equation}

To state our result we need to define \emph{almost CR-structures}.

\begin{dfn}
An \emph{almost CR-structure} on a manifold $M$ is a pair $(E,I)$ where $E$ is an even-dimensional subbundle of $TM$ and 
$I$ is a complex structure map on $E$.  An almost CR-structure $(E,I)$ is \emph{Levi-flat} if for every 1-form $\eta$ on $M$ 
such that $\eta|_E=0$, $\d\eta$ vanishes on all complex lines in $E$.  

Let $M$ be endowed with an almost CR-structure $(E,I)$.  A surface $\Sigma$ in $M$ is a \emph{CR-holomorphic curve} if 
$T_p\Sigma$ is a complex line in $E$ for all $p\in\Sigma$.
\end{dfn}

Using Lemma \ref{r2rlemma} and \cite[Proposition 7.2]{Fox2}, we deduce the following.

\begin{prop}  Let $\mathcal{C}$ be the space of oriented geodesic circles in $\mathcal{S}^6$.   
There is a complex structure $I$ on an 8-plane bundle $E\subseteq T\mathcal{C}$ such that:
\begin{itemize}
\item[\emph{(i)}] $(E,I)$ is a real analytic, Levi-flat, $\GGG_2$-invariant almost CR-structure on $\mathcal{C}$;
\item[\emph{(ii)}] every CR-holomorphic curve $\phi:\Sigma\rightarrow\mathcal{C}$ as in \eq{littlephi} 
defines a ruled Lagrangian in 
$\mathcal{S}^6$ via $\Phi:\Sigma\times [0,2\pi)\rightarrow\mathcal{S}^6$ as in \eq{bigphi}; and
\item[\emph{(iii)}] every ruled Lagrangian $(L,\Sigma,\pi)$ in $\mathcal{S}^6$ defines a CR-holomorphic curve in 
$\mathcal{C}$, $\phi:\Sigma\rightarrow\mathcal{C}$, where $\phi(p)=\pi^{-1}(p)$.
\end{itemize}
\end{prop}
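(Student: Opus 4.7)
The plan is to reduce the proposition to the known correspondence for 2-ruled coassociative cones in $\Im\O$, namely \cite[Proposition 7.2]{Fox2}, by combining it with the cone/link dictionary supplied by Proposition \ref{coasscones} and Lemma \ref{r2rlemma}. First I would fix the $\GGG_2$-equivariant diffeomorphism $\iota : \mathcal{C} \to \Gr_+(2, \Im\O)$ sending an oriented great circle $C \subset \mathcal{S}^6$ to the unique oriented 2-plane through the origin whose intersection with $\mathcal{S}^6$ is $C$. Since $\GGG_2 \subset \SO(\Im\O)$ preserves both the origin and the unit sphere, this identification is indeed $\GGG_2$-equivariant, so any $\GGG_2$-invariant real analytic structure on $\Gr_+(2, \Im\O)$ transfers directly to one on $\mathcal{C}$.

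For part (i), I would invoke \cite[Proposition 7.2]{Fox2} to obtain an 8-plane subbundle $E' \subset T\Gr_+(2, \Im\O)$ and a complex structure $I'$ on $E'$ defining a real analytic, Levi-flat, $\GGG_2$-invariant almost CR-structure, whose CR-holomorphic curves $\psi : \Sigma \to \Gr_+(2, \Im\O)$ correspond precisely to 2-ruled coassociative cones in $\Im\O$. Pulling $(E', I')$ back along $\iota$ yields the pair $(E, I)$ on $\mathcal{C}$ required by (i); real analyticity, Levi-flatness, $\GGG_2$-invariance, and the property that $I$ squares to minus the identity all transfer cleanly under the diffeomorphism $\iota$.

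For parts (ii) and (iii), I would translate the correspondence explicitly. If $\phi : \Sigma \to \mathcal{C}$ is CR-holomorphic and given by \eq{littlephi}, then $\iota \circ \phi$ is CR-holomorphic in $\Gr_+(2, \Im\O)$, so by \cite[Proposition 7.2]{Fox2} the 4-dimensional cone it parametrises is 2-ruled coassociative. Its link in $\mathcal{S}^6$ is exactly the image of $\Phi$ in \eq{bigphi}, which is ruled by construction and Lagrangian by Proposition \ref{coasscones} (together with Lemma \ref{r2rlemma}), proving (ii). Conversely, for (iii), given a ruled Lagrangian $(L, \Sigma, \pi)$, Lemma \ref{r2rlemma} shows that the cone on $L$ is 2-ruled coassociative, so \cite[Proposition 7.2]{Fox2} produces a CR-holomorphic curve into $\Gr_+(2, \Im\O)$; composing with $\iota^{-1}$ yields the required $\phi(p) = \pi^{-1}(p)$.

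The only genuine obstacle is importing Fox's construction of $(E', I')$, which is the substantive input. Once it is in hand, the remaining work is careful bookkeeping of the identifications between oriented geodesic circles, oriented 2-planes through the origin, coassociative cones, and their Lagrangian links; the one easy place to slip up is keeping orientations consistent, since reversing orientation would conjugate $I$ but not otherwise affect the statement.
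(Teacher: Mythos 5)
Your proposal matches the paper's argument: the paper deduces the proposition directly from Lemma \ref{r2rlemma} and \cite[Proposition 7.2]{Fox2}, using precisely the identification of $\mathcal{C}$ with $\Gr_+(2,\Im\O)$ (an oriented 2-plane corresponds to its intersection with $\mathcal{S}^6$) to transfer Fox's $\GGG_2$-invariant CR-structure and his correspondence with 2-ruled coassociative cones, exactly as you do. The paper itself remarks that the result is ``little more than a repackaging'' of Fox's proposition, so your fleshed-out bookkeeping of the cone/link dictionary is the intended proof.
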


\begin{remarks}
This is the natural analogue of the characterisation of ruled special Lagrangians in $\C^3$ given in 
\cite[Theorem 6]{Bryant2}.   We should stress that our proposition is little more 
than a repackaging of the material given in \cite[Proposition 7.2]{Fox2}.
\end{remarks}

%It follows from a remark in \cite{Fox2} that CR-holomorphic curves in $\mathcal{C}$ depend 
% locally on 6 functions of 1 variable, 
%so the same is true of ruled Lagrangians in $\mathcal{S}^6$.  %We shall re-derive this result directly later.  

It follows from \cite[$\S$8]{Fox2} that CR-holomorphic curves in $\mathcal{C}$ 
are lifts of pseudoholomorphic curves in $\mathcal{S}^6$. 
 %, so ruled Lagrangians 
%are, perhaps unsurprisingly, related to these curves in $\mathcal{S}^6$.
 We shall show that, in fact, every linearly full %In fact, we shall see that at the heart of every 
ruled or quasi-ruled Lagrangian is locally a \emph{tube} about a pseudoholomorphic curve defined using \emph{holomorphic} data.

\begin{dfn}\label{tubes}
Let $\bfu:\Sigma\rightarrow\mathcal{S}^6$ be an immersed surface.  
Let $\Pi$ be an oriented 2-plane subbundle of $\bfu^*(T\mathcal{S}^6)$ and let $\mathcal{U}(\Pi)=\{\bfv\in\Pi\,:\,|\bfv|=1\}$. 
 %and let $\{\bfv_1(p),\bfv_2(p)\}$ be an oriented basis for the 2-plane in $\Pi$ at $p\in\Sigma$.  
Let $\gamma\in(0,\frac{\pi}{2}]$ be a constant. Define $\bfx_{\gamma}:\mathcal{U}(\Pi) %\Sigma\times[0,2\pi)
\rightarrow\mathcal{S}^6$ by
$$\bfx_{\gamma}(\bfv)=\cos\gamma\bfu+\sin\gamma\bfv
%\big(\bfv_1(p)\cos t+\bfv_2(p)\sin t\big)
.$$
When $\bfx_\gamma$ is an immersion, we say that its image is a \emph{tube of radius $\gamma$} (in $\Pi$) 
about $\Sigma$.  Clearly, a tube of radius $\gamma$ has a 
$\sin\gamma$-ruling and is thus ruled if $\gamma=\frac{\pi}{2}$ and quasi-ruled otherwise.
%If the surface has singular points, %branch points, 
%then $\Pi$ is defined away from these points and so it is possible to extend $\bfx_\gamma$
%to the singular points.
\end{dfn}

\begin{remark}
This is a generalisation of the tubes about pseudoholomorphic curves first studied in \cite{Ejiri2}.  
\end{remark}

%\noindent We shall show that every ruled or quasi-ruled Lagrangian submanifold can be locally realised as a tube about a 
%pseudoholomorphic curve.

\section[The fundamental cubic and the Gauss equation]{The fundamental cubic and the Gauss\\ equation}\label{s5}

In this section we discuss the possible pointwise symmetries of the fundamental cubic and then 
analyse the Riemann curvature tensors satisfying the Gauss equation.  Though this is strictly more than we 
require, we feel it is inherently interesting, and that it helps expose the links between the symmetries of 
the fundamental cubic and the curvature conditions studied by other authors.

%For this section let $L$ be a Lagrangian in $\mathcal{S}^6$ and let $C_L$ be its fundamental cubic.
 We remarked earlier that the fundamental cubic $C_L$ of a Lagrangian $L$ in $\mathcal{S}^6$ naturally defines, at each point, a homogeneous cubic $h=h_{ijk}x_ix_jx_k$ which is harmonic 
 in the variables $(x_1,x_2,x_3)=(x,y,z)$ on $\R^3$.  
 We now exploit this fact, since these cubics on $\R^3$ are classified according to their stabilizer in $\SO(3)$ in \cite[Proposition 1]{Bryant2}.

\begin{prop}\label{s2prop2a}
Let $\mathcal{H}^3(\R^3)$ denote the space of homogeneous harmonic cubics on $\R^3$.  The stabilizer of 
$h\in\mathcal{H}^3(\R^3)$ in $\SO(3)$ is nontrivial if and only if $h$ lies on the $\SO(3)$-orbit of exactly one of the 
polynomials in Table \ref{symtable} below.

\begin{table}[ht]
\begin{center}
\begin{tabular}{|c|c|c|}
\hline
Cubic in $\mathcal{H}^3(\R^3)$ & Parameter(s) & Stabilizer\\
\hline
$0$ & & $\SO(3)$\\
$rx(2x^2-3y^2-3z^2)$ & $r>0$ & $\SO(2)$\\
$3sx(y^2-z^2)$ & $s>0$ & $\AAA_4$\\
$ay(y^2-3z^2)$ & $a>0$ & $\SSS_3$\\
$rx(2x^2-3y^2-3z^2)+ay(y^2-3z^2)$ & $r,a>0,\,r\sqrt{2}\neq a$ & $\Z_3$\\
$rx(2x^2-3y^2-3z^2)+3sx(y^2-z^2)$ & $r,s>0,\,r\neq s$ & $\Z_2$\\
\hline
\end{tabular}
\caption{Homogeneous harmonic cubics on $\R^3$ with symmetries}\label{symtable}
\end{center}
\end{table}
\end{prop}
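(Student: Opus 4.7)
The plan is to classify the possible nontrivial stabilizers by enumerating closed subgroups $G\subseteq\SO(3)$ and computing the fixed subspace $\mathcal{H}^3(\R^3)^G$ in each case. Up to conjugacy, the closed subgroups of $\SO(3)$ are $\SO(3)$, $\OO(2)$, $\SO(2)$, the cyclic groups $\Z_n$ ($n\geq 2$), the dihedral groups $D_n$ ($n\geq 2$), and the polyhedral groups $\AAA_4$, $\SSS_4$, $\AAA_5$. Since $\mathcal{H}^3(\R^3)$ is the $7$-dimensional irreducible representation of $\SO(3)$, its character on a rotation of angle $\theta$ is
$$\chi(\theta)=1+2\cos\theta+2\cos 2\theta+2\cos 3\theta,$$
and for finite $G$ one reads off $\dim\mathcal{H}^3(\R^3)^G=|G|^{-1}\sum_{g\in G}\chi(g)$, while for $\SO(2)$ and $\OO(2)$ one integrates against Haar measure.

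A direct calculation yields: $\SO(2)$ has dimension $1$, $\OO(2)$ has dimension $0$; $\Z_2$ and $\Z_3$ have dimension $3$, while $\Z_n$ with $n\geq 4$ has dimension $1$; $D_2$ and $D_3\cong\SSS_3$ have dimension $1$, while $D_n$ with $n\geq 4$ has dimension $0$; $\AAA_4$ has dimension $1$; and $\SSS_4$, $\AAA_5$ have dimension $0$. Explicit generators of the one-dimensional fixed spaces are easily identified: $x(2x^2-3y^2-3z^2)$ for $\SO(2)$ (and hence for $\Z_n$ with $n\geq 4$); $y(y^2-3z^2)$ for $D_3$; and $xyz$ for $D_2$. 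The last is actually $\AAA_4$-invariant because the tetrahedral group adds the cyclic permutation $(x,y,z)\mapsto(y,z,x)$; a $45^\circ$ rotation in the $(y,z)$-plane converts $xyz$ to $\tfrac12 x(y^2-z^2)$, matching the row in Table \ref{symtable}. Hence $D_2$ and the groups $\Z_n$ with $n\geq 4$ never appear as the full stabilizer, since their fixed subspaces are contained in the $\AAA_4$- and $\SO(2)$-fixed subspaces.

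Next I would reduce the three-dimensional $\Z_2$- and $\Z_3$-fixed spaces to canonical form using the residual action of the normalizer. For $\Z_3$ with axis the $x$-axis, the fixed space is spanned by $x(2x^2-3y^2-3z^2)$, $y(y^2-3z^2)=\Re(y+iz)^3$, and $z(3y^2-z^2)=\Im(y+iz)^3$, and the $\SO(2)$ in the normalizer acts by a phase on $(y+iz)^3$; rotating this phase kills the $z(3y^2-z^2)$ component, giving $rx(2x^2-3y^2-3z^2)+ay(y^2-3z^2)$. For $\Z_2$, the analogous residual $\SO(2)$-action reduces the fixed space, which after imposing harmonicity is spanned by $x(2x^2-3y^2-3z^2)$, $x(y^2-z^2)$ and $xyz$, to the form $rx(2x^2-3y^2-3z^2)+3sx(y^2-z^2)$.

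The main obstacle is to pin down exactly when the stabilizer of a canonical form enlarges. The principle is that any element with a strictly larger stabilizer must lie in the invariant subspace of some $G'\supsetneq G$, so intersecting the $\Z_3$-slice with the $\SO(2)$-, $D_3$- and $\AAA_4$-slices identifies the exceptional loci as $a=0$, $r=0$ and $r\sqrt 2=a$ respectively, while the analogous analysis for the $\Z_2$-slice pinpoints $s=0$ and $r=s$. This reproduces exactly the parameter constraints in the last three rows of Table \ref{symtable}.
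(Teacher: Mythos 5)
Your proposal is sound, but note that the paper does not actually prove this proposition: it is quoted from Bryant's classification (Proposition 1 of [Bryant2]), so there is no in-paper argument to compare against and yours supplies a self-contained proof. Your route — enumerate the closed subgroups of $\SO(3)$ up to conjugacy, compute $\dim\mathcal{H}^3(\R^3)^G$ by averaging the spin-3 character $1+2\cos\theta+2\cos2\theta+2\cos3\theta$, and reduce the three-dimensional $\Z_2$- and $\Z_3$-fixed slices to two-parameter normal forms by the residual normalizer action — is a clean and standard representation-theoretic argument, and all your fixed-space dimensions and generators are correct. Two places are asserted rather than proved and should be written out, though both are one-line checks: rotating so that a three-fold axis of the tetrahedron becomes the $x$-axis gives $xyz=\tfrac{1}{6\sqrt{3}}\bigl(x(2x^2-3y^2-3z^2)+\sqrt{2}\,z(3y^2-z^2)\bigr)$, which after a further rotation in the $(y,z)$-plane confirms the locus $a=r\sqrt{2}$; and $rx(2x^2-3y^2-3z^2)+3rx(y^2-z^2)=2rx(x^2-3z^2)$ is visibly of $\SSS_3$ type, confirming $r=s$. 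Finally, two small logical points deserve a sentence each: (i) completeness of the reduction — any nontrivial closed stabilizer contains either $\SO(2)$ or an element of prime order $p$, and for $p\geq 5$ the fixed space is again only the zonal harmonic, so every $h$ with nontrivial stabilizer lies in a conjugate of the $\SO(2)$-, $\Z_2$- or $\Z_3$-slice; (ii) the clause ``exactly one'' requires observing that within the $\Z_2$- and $\Z_3$-rows the parameters are orbit invariants: since the generic stabilizer in those rows is exactly $\Z_2$ (resp. $\Z_3$), any equivalence conjugates it to itself and hence lies in the normalizer you already used, under which the normalized pairs $(r,s)$, respectively $(r,a)$, are fixed.
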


%\vspace{-24pt}

\begin{remarks}
The exclusions $r\sqrt{2}\neq a$ and $r\neq s$ in Table \ref{symtable} occur for the following reasons.
 The cubic  $rx(2x^2-3y^2-3z^2)+r\sqrt{2}y(y^2-3z^2)$ lies on the $\SO(3)$-orbit of $3r\sqrt{3}x(y^2-z^2)$, which has $\AAA_4$-stabilizer, 
and $rx(2x^2-3y^2-3z^2)+3rx(y^2-z^2)$ lies on the $\SO(3)$-orbit of $2ry(y^2-3z^2)$, which has $\SSS_3$-stabilizer.
\end{remarks}

We now study whether a given Riemann curvature tensor can arise as a quadratic in the coefficients of 
a homogeneous harmonic cubic, as specified by the Gauss equation \eq{Ga}.  Since the Riemann curvature tensor is a
 slightly unwieldy algebraic object we simplify matters using the following definition.

\begin{dfn}\label{Kdfn} Let $R_{abcd}$ be
the Riemann curvature tensor of the metric $g_L$ on $L$.  
For cyclic permutations $(i\; j\; k)$ of $(1\; 2\; 3)$ define, 
using the ``omitted index'' rule, $K_{ii}=R_{jkjk}-1$ and $K_{ij}=R_{jkki}$.  The resulting tensor $K_{ab}$ 
can be thought of as a $3\times 3$ symmetric matrix $K$.
Define a map from $\mathcal{H}^3(\R^3)$ to $\Sym_3(\R)$, $h\mapsto K(h)$, by
\begin{equation}\label{Gmap}
K(h)_{ii}=\sum_q h_{jjq}h_{kkq}-h_{jkq}^2\quad\text{and}\quad K(h)_{ij}=\sum_q h_{ikq}h_{kjq}-h_{ijq}h_{kkq};
\end{equation}
 i.e.~we use the Gauss equation \eq{Ga} for $h$.  We call this the \emph{Gauss map}.
\end{dfn} 

\begin{remark}
We can recover the Ricci tensor $R_{ab}$ 
from $K_{ab}$ by the formulae $R_{ii}=K_{jj}+K_{kk}+2$ and $R_{ij}=-K_{ij}$, again using cyclic permutations 
$(i\; j\; k)$ of $(1\; 2\; 3)$.
\end{remark}

This definition leads us to consider the set of $K(h)$ for $h\in\mathcal{H}^3(\R^3)$.  Notice that
$K(0)=0$ and $K(th)=t^2 K(h)$ for all $t\in\mathbb{R}$, 
so the image of the Gauss map is a (1-sided) cone in $\text{Sym}_3(\mathbb{R})$.  
We shall give a description of this cone using a rather ``brute force'' approach.  
%-- it would be interesting to know if
%there is a more elegant method for discovering the answer.

As we have the freedom to transform the frame over $L$, we can apply $\SO(3)$ transformations to the source cubic or 
the target matrix in the Gauss map.   In particular, we can diagonalise $K(h)$ to 
$\diag(\lambda_1,\lambda_2,\lambda_3)$. We start by studying the case where $K(h)$ has distinct 
eigenvalues since the calculations here (though still ugly!) are more straightforward.

\begin{prop}\label{diagprop}
Let $K=\diag(\lambda_1,\lambda_2,\lambda_3)$ with $\lambda_1>\lambda_2>\lambda_3$.  Let 
$\sigma(K)=\frac{1}{6}\big((\text{\emph{Tr}}\,K)^2-\text{\emph{Tr}}\,K^2\big)$.  
Then $K=K(h)$ for some $h\in\mathcal{H}^3(\mathbb{R}^3)$ as in \eq{Gmap} if and only if $\Tr K< 0$, $\sigma(K)> 0$ and 
$\lambda_1^2\leq\sigma(K)$.  %Moreover, the space of $h$ such that $K=K(h)$ is $1$-dimensional for $\lambda_1^2<\sigma(K)$
%and $0$-dimensional for $\lambda_1^2=\sigma(K)$.
\end{prop}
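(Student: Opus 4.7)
The plan is a direct computation with the Gauss map. The first step is the key identity
\[
\Tr K(h) = -\tfrac12\sum_{i,j,q} h_{ijq}^2 = -\tfrac12|h|^2,
\]
valid for every $h\in\mathcal{H}^3(\R^3)$. This follows from the harmonicity $\sum_i h_{iiq}=0$, which gives $\sum_{i<j}h_{iiq}h_{jjq}=-\tfrac12\sum_i h_{iiq}^2$; substituting into the definition \eq{Gmap} collapses the double sum. Thus $\Tr K(h)\le 0$, with equality only for $h=0$, so any matrix in the image of the Gauss map satisfies $\Tr K<0$. This establishes the first inequality for both implications and also explains why the condition is strict.

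Next I would parameterise $\mathcal{H}^3(\R^3)$ in a basis adapted to the diagonal form of $K$. Since $K=\diag(\lambda_1,\lambda_2,\lambda_3)$ with distinct eigenvalues is stabilised in $\SO(3)$ by the Klein four-group $V$, decompose $\mathcal{H}^3(\R^3)$ into $V$-isotypic components: one copy of the trivial character, spanned by $xyz$, together with three two-dimensional copies of the non-trivial sign characters, corresponding to harmonic cubics odd in exactly one of the variables $x,y,z$. In this basis the six equations $K(h)=K$ split into three off-diagonal equations (the vanishing of $K_{12}, K_{13}, K_{23}$) and three diagonal equations $K_{ii}=\lambda_i$. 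The off-diagonal equations are polynomial in the seven parameters and can be solved to cut the parameter space down to a four-dimensional sub-variety, on which the diagonal equations then express $\lambda_1,\lambda_2,\lambda_3$ as polynomial functions of the four remaining real parameters.

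The third step is to analyse the image of this four-parameter family and show that it is exactly the region carved out by $\Tr K<0$, $\sigma(K)>0$, and $\lambda_1^2\le\sigma(K)$. A useful checkpoint is that the $\Z_2$-symmetric cubic $h=rx(2x^2-3y^2-3z^2)+3sx(y^2-z^2)$ from Table \ref{symtable} yields $K=\diag(r^2-s^2,\,-3r^2-4rs-s^2,\,-3r^2+4rs-s^2)$ and hence $\sigma(K)=(r^2-s^2)^2$, which equals $\lambda_1^2$ whenever $r^2>s^2$. More generally, the symmetric cubics in Table \ref{symtable} parametrise the boundaries $\lambda_1^2=\sigma(K)$ and $\sigma(K)=0$ of the admissible cone, so the sharp inequality $\lambda_1^2\le\sigma(K)$ should arise as the discriminant condition governing when the four-parameter polynomial system produces real eigenvalues in the prescribed ordering $\lambda_1>\lambda_2>\lambda_3$. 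The main obstacle is the combinatorial complexity of this discriminant analysis, and the full justification is the unavoidably brute-force algebra that the author himself warns of.
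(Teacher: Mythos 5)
Your opening step, the identity $\Tr K(h)=-\tfrac12\|h\|^2$ and the conclusion $\Tr K<0$, agrees with the paper, and your checkpoint for the $\Z_2$-symmetric cubic (eigenvalues $r^2-s^2$ and $-3r^2-s^2\pm4rs$, so $\sigma(K)=(r^2-s^2)^2$) is correct. But the heart of the proposition --- that real solutions of $K(h)=K$ as in \eq{Gmap} exist precisely when $\lambda_1^2\le\sigma(K)$, and that this together with distinctness of the eigenvalues forces $\sigma(K)>0$ --- is never established. After setting up the Klein-four isotypic parametrisation you say the inequality ``should arise as the discriminant condition'' and defer the analysis as brute-force algebra; that deferred analysis \emph{is} the proof. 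Neither direction is actually argued: you do not show that $\lambda_1^2>\sigma(K)$ obstructs real solutions (necessity), nor that the stated inequalities always yield a real harmonic cubic mapping onto $K$ (sufficiency), and the observation that the symmetric cubics of Table \ref{symtable} populate the boundary $\lambda_1^2=\sigma(K)$ only shows the boundary is attained, not that the inequality carves out the image. You also never address why $\sigma(K)>0$ belongs to the characterisation.

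For comparison, the paper's proof executes exactly the computation you postpone: in the diagonalising frame the generic solution of the six equations has $h_{123}=0$, $h_{331}=r$ free, $h_{221}$ a fixed multiple of $r$, and $h_{112}$, $h_{113}$ given by square roots whose radicands are linear in $r^2$; reality of both radicands confines $s=2(\lambda_1-\lambda_3)^3(\lambda_1-\lambda_2)r^2\ge0$ to an interval whose length is $(\lambda_2-\lambda_3)^3\big(\sigma(K)-\lambda_1^2\big)$. Non-emptiness of that interval is precisely $\lambda_1^2\le\sigma(K)$, sufficiency requires additionally checking that the interval meets $[0,\infty)$, which the paper does by proving $\lambda_3^2>\sigma(K)$ always holds when $\Tr K<0$, and $\sigma(K)>0$ then follows because $\sigma(K)=0$ with $\lambda_1^2\le\sigma(K)$ would force $\lambda_1=0$ and $\lambda_2\lambda_3=0$, contradicting distinctness. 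Until you carry out an equivalent explicit solution and reality analysis in your own parametrisation (solving the off-diagonal equations, not merely counting dimensions, since the solution variety can have several components), your argument remains a plan rather than a proof.
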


\begin{proof}
If we let $\|h\|^2=h_{ijk}h_{ijk}$, using summation notation, then it is easy to calculate that 
\begin{equation}\label{trkeq}
\Tr K(h)=-\frac{1}{2}\,\|h\|^2.  
\end{equation}
Therefore, $\Tr K(h)<0$ as $K(h)$ is necessarily non-zero.

The equation $K=K(h)$ has a solution if and only if a quadratic in the coefficients of $h$ 
has real solutions.  Calculation shows that the generic solutions are, for $r$ a real parameter:
\begin{align*}
h_{331}&=r;\displaybreak[0]\\
h_{221}&=\frac{\lambda_1-\lambda_3}{\lambda_1-\lambda_2}\,r;\displaybreak[0]\\
h_{112}&=\pm\left(\frac{(\lambda_1-\lambda_2)^2\big(\lambda_3^2-\sigma(K)\big)-
2(\lambda_1-\lambda_3)^3r^2}{2(\lambda_1-\lambda_2)^2(\lambda_2-\lambda_3)}\right)^{\frac{1}{2}};\displaybreak[0]\\
h_{332}&=-\frac{\lambda_1-\lambda_2}{\lambda_2-\lambda_3}\,h_{112};\displaybreak[0]\\
h_{113}&=\pm\left(\frac{\big(\sigma(K)-\lambda_2^2\big)+2(\lambda_1-\lambda_2)r^2}{2(\lambda_2-\lambda_3)}\right)^{\frac{1}{2}};\displaybreak[0]\\
h_{223}&=\frac{\lambda_1-\lambda_3}{\lambda_2-\lambda_3}\,h_{113};\;\,\text{and}\displaybreak[0]\\
h_{123}&=0.
\end{align*}

Notice that the parameter $r$ is constrained such that if we let 
$s=2(\lambda_1-\lambda_3)^3(\lambda_1-\lambda_2)r^2\geq0$, then 
$$(\lambda_1-\lambda_3)^3\big(\lambda_2^2-\sigma(K)\big)\leq s\leq (\lambda_1-\lambda_2)^3\big(\lambda_3^2-\sigma(K)\big).$$
The difference in the upper and lower bounds is $(\lambda_2-\lambda_3)^3\big(\sigma(K)-\lambda_1^2\big)$, so 
we must have that $\lambda_1^2\leq\sigma(K)$ for real solutions to exist.  
%As an aside, we see that we have given a 1-dimensional space of solutions to $K=K(h)$ unless $\lambda_1^2=\sigma(K)$, in which case it is 0-dimensional.

%For $h_{112}$ and $h_{223}$ to be real we require $\lambda_1^2\leq\sigma(K)$.  
The condition $\lambda_1^2\leq\sigma(K)$ clearly forces $\sigma(K)\geq 0$.  If $\sigma(K)=0$, 
$\lambda_1=0$ and
\begin{equation}\label{sigma0}
3\sigma(K)=\lambda_1\lambda_2+\lambda_2\lambda_3+\lambda_3\lambda_1=\lambda_2\lambda_3=0,
\end{equation}
so at least two of the eigenvalues are zero, our required contradiction.   

Further, real solutions exist for $r\neq0$ only if 
$\lambda_3^2>\sigma(K)$, and for $r=0$ we need $\lambda_3^2\geq\sigma(K)$ and
$\lambda_2^2\leq\sigma(K)$.  However, we see that
\begin{align*}
3\big(\lambda_3^2-\sigma(K)\big)&=\lambda_3(\lambda_3-\lambda_2)+\lambda_3(\lambda_3-\lambda_1)+\lambda_3^2-\lambda_1
\lambda_2.
\end{align*}
Thus, if $\lambda_1\lambda_2\leq 0$ then $\lambda_3^2>\sigma(K)$.  If $\lambda_1\lambda_2>0$ then the fact that 
$\Tr K<0$ forces $\lambda_3^2>\lambda_1\lambda_2$, and hence $\lambda_3^2>\sigma(K)$ as well.  Thus 
$\lambda_3^2>\sigma(K)$ always holds.

The only question left is whether $\lambda_2^2\leq\sigma(K)$ is an additional constraint.  
Since $\lambda_3=\Tr K-\lambda_1-\lambda_2$ and $\lambda_3^2<(\Tr K)^2$, because $\sigma(K)>0$, we 
must have that $\lambda_1+\lambda_2<0$.  Hence $\lambda_1^2<\lambda_2^2$, so $\lambda_2^2\leq\sigma(K)$ implies 
$\lambda_1^2\leq\sigma(K)$.  %Therefore, the conditions given in the proposition are necessary and sufficient 
%for there to exist $h$ such that $K=K(h)$.
\end{proof}

%Using $\SO(3)$ transformations and the fact that $3\times 3$ symmetric matrices with distinct eigenvalues are dense in 
%$\Sym_3(\R)$, we deduce our next result.  
Before stating our next result, we notice from Table \ref{symtable} 
that there are two families of cubics $h$ with $\Z_2$-stabilizer, given by $r>s$ and $r<s$.  This leads to two 
 distinct families of corresponding matrices $K(h)$.  %Thus there are two families of cubics with $\Z_2$-stabilizer.  
 %The same is clearly true of cubics with $\Z_3$-stabilizer.

\begin{prop}\label{Kimage}
Let $K\in\Sym_3(\R)$, let $\sigma(K)=\frac{1}{6}\big((\Tr K)^2-\Tr K^2\big)$ and let $\lambda_1\geq\lambda_2\geq\lambda_3$ be the eigenvalues of $K$.
Then $K=K(h)$ for some $h\in\mathcal{H}^3(\R^3)$ as in \eq{Gmap} if and only if $\Tr K\leq 0$, $\sigma(K)\geq 0$ 
and $\lambda_1^2\leq\sigma(K)$.

Moreover, the following hold for $K=K(h)$, $h\in\mathcal{H}^3(\R^3)$.
\begin{itemize}
\item[\emph{(i)}] $\Tr K=0$ if and only if $K=h=0$. 
\item[\emph{(ii)}] $\sigma(K)=0$ and $K\neq 0$ if and only if $\lambda_1=\lambda_2=0>\lambda_3$, which is if and only if
$h$ has an $\SSS_3$-stabilizer.  %The fibre of the Gauss map at $K$ is $1$-dimensional.
\item[\emph{(iii)}] $\lambda_1^2=\sigma(K)>0$ and $K$ has distinct eigenvalues only if 
$h$ has a $\Z_2$-stabilizer.  %The fibre of the Gauss map at $K$ is $0$-dimensional.
\item[\emph{(iv)}] $\lambda_1^2=\sigma(K)>0$ and $K$ has exactly two repeated eigenvalues if and only if
$-\frac{1}{5}\,\Tr K=\lambda_1>\lambda_2=\lambda_3=\frac{3}{5}\,\Tr K$, which is if and only if $h$ has an $\SO(2)$-stabilizer.  %The fibre of the Gauss map at $K$ is $0$-dimensional. 
\item[\emph{(v)}] $\lambda_1^2=\sigma(K)>0$ and $K$ has three repeated eigenvalues if and only if 
$K=-\lambda \Id_3$ for some $\lambda>0$, which occurs if and only if $h$ has an $\AAA_4$-stabilizer.  %The fibre of the Gauss map at $K$ is $1$-dimensional.
\end{itemize}
\end{prop}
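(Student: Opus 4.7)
The plan is to leverage Proposition \ref{diagprop} (which handles the case of strictly distinct eigenvalues with strict inequalities) and then to cover the boundary and repeated-eigenvalue cases by direct construction using the symmetric cubics of Table \ref{symtable}.

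For necessity, $\Tr K \leq 0$ is immediate from \eq{trkeq} since $\Tr K(h) = -\frac{1}{2}\|h\|^2$. The bounds $\sigma(K) \geq 0$ and $\lambda_1^2 \leq \sigma(K)$ are established in Proposition \ref{diagprop} in the distinct-eigenvalue case and extend to the full closure by continuity of $K(h)$ in $h$ and of the ordered eigenvalues in $K$. Part (i) then follows immediately: $\Tr K = 0$ forces $\|h\| = 0$, so $h = 0$ and $K = 0$.

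For sufficiency and the ``if'' halves of (ii)--(v), I compute $K(h)$ directly from \eq{Gmap} for each symmetric cubic of Table \ref{symtable}. The $\SSS_3$-cubic $ay(y^2-3z^2)$ yields $K = \diag(-2a^2, 0, 0)$, matching the structure of (ii); the $\SO(2)$-cubic $rx(2x^2-3y^2-3z^2)$ yields $K = \diag(r^2, -3r^2, -3r^2)$, whose eigenvalues satisfy $\lambda_1 = r^2 = -\frac{1}{5}\Tr K$ and $\lambda_2 = \lambda_3 = -3r^2 = \frac{3}{5}\Tr K$ as in (iv); the $\AAA_4$-cubic $3sx(y^2-z^2)$ yields $K = -s^2\Id_3$, matching (v); and the $\Z_2$-cubic $rx(2x^2-3y^2-3z^2)+3sx(y^2-z^2)$ with $r > s > 0$ yields a diagonal $K$ with distinct eigenvalues satisfying $\sigma(K) = (r^2-s^2)^2 = \lambda_1^2$, as required by (iii). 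Combined with Proposition \ref{diagprop}, these computations realize every $K$ satisfying the three inequalities.

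For the converse directions in (iii)--(v), the key ingredient is uniqueness. In (iii), when $K$ has distinct eigenvalues and $\lambda_1^2 = \sigma(K) > 0$, the interval of valid parameters in the proof of Proposition \ref{diagprop} degenerates to a single value of $r^2$, so $h$ is determined up to the discrete symmetries from the sign ambiguities in its components, and one verifies that these generate precisely the $\Z_2$-stabilizer of the canonical form. In the repeated-eigenvalue cases (ii), (iv), (v), after diagonalizing $K$ the residual $\SO(2)$ or $\SO(3)$ symmetry of $K$ lifts to the solution set of \eq{Gmap} by the $\SO(3)$-equivariance of the Gauss map, so $h$ must have at least the matching stabilizer; Table \ref{symtable} then leaves only one $\SO(3)$-orbit in each case.

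The main obstacle is carrying out this last argument rigorously, since Proposition \ref{diagprop}'s parametric formulas become indeterminate when eigenvalues coincide. The cleanest approach is to diagonalize $K$ and solve \eq{Gmap} directly, tracking which components of $h$ are forced to vanish by the eigenvalue equalities and then invoking the rotational symmetry of $K$ via the $\SO(3)$-equivariance of the Gauss map to identify the resulting $h$ with the canonical cubics of Table \ref{symtable}. Alternatively, one could take careful limits from the generic distinct-eigenvalue case, but this requires a delicate analysis of how the parameter range in Proposition \ref{diagprop} degenerates as two eigenvalues approach one another.
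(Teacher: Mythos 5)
Your overall strategy (Proposition \ref{diagprop} plus explicit computations for the symmetric cubics of Table \ref{symtable}) is the right one, and your necessity argument via \eq{trkeq} and continuity is fine, but the argument you actually propose for the converse halves of (ii), (iv) and (v) rests on a false principle. $\SO(3)$-equivariance of the Gauss map only says that the stabilizer of $K$ \emph{permutes} the fibre of \eq{Gmap} over $K$; it does not fix individual elements of that fibre, so you cannot conclude that ``$h$ must have at least the matching stabilizer''. Indeed that conclusion contradicts the proposition itself: for $K=-\lambda\Id_3$ the stabilizer of $K$ is all of $\SO(3)$, but the only $\SO(3)$-invariant element of $\mathcal{H}^3(\R^3)$ is $h=0$, which maps to $0\neq K$; the fibre actually consists of cubics with $\AAA_4$-stabilizer which $\SO(3)$ rotates amongst themselves. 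Similarly, for $K=\diag(\lambda_3,0,0)$ in (ii), $\mathrm{Stab}(K)$ contains an $\SO(2)$, yet the fibre is the circle $ay(y^2-3z^2)+bz(3y^2-z^2)$ with $2a^2+2b^2=-\Tr K$, each element of which has only the discrete $\SSS_3$-stabilizer. So the symmetry-lifting step fails, and what is needed is the fallback you mention only in passing (and which is what the paper does): diagonalize $K$, solve \eq{Gmap} explicitly, record which coefficients $h_{ijk}$ are forced to vanish, and rotate the resulting $h$ into a normal form of Table \ref{symtable} to read off its stabilizer. The same explicit computation is what makes (iii) precise: the degenerate parameter value forces $h_{112}=h_{113}=h_{223}=h_{332}=h_{123}=0$, and one must still check that the two surviving parameters are positive and unequal before Table \ref{symtable} gives exactly $\Z_2$; ``sign ambiguities generate the $\Z_2$'' is not an argument.

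There is also a gap in your sufficiency claim. Proposition \ref{diagprop} covers only matrices with distinct eigenvalues, and your list of computed cubics omits the $\Z_3$-family $rx(2x^2-3y^2-3z^2)+ay(y^2-3z^2)$ of Table \ref{symtable}. Consequently, matrices with exactly two repeated eigenvalues and $\lambda_1^2<\sigma(K)$ --- for instance $K=\diag(-1,-3,-3)$, which satisfies all three inequalities since $\sigma(K)=5>1=\lambda_1^2$ --- are realized by none of the families you invoke, so ``these computations realize every $K$ satisfying the three inequalities'' is not yet established. You can close this either by also computing $K(h)$ for the $\Z_3$-cubics (this is in effect what Proposition \ref{repeat} does), or by observing that the Gauss map is proper, since $\Tr K(h)=-\frac{1}{2}\|h\|^2$ bounds $\|h\|$, so its image is closed and therefore contains the closure of the set of matrices produced by Proposition \ref{diagprop}.
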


\begin{note}
This result classifies all Riemann curvature tensors associated with the metrics of Lagrangians in $\mathcal{S}^6$. 
%which satisfy the Gauss equation for isometric embeddings of Lagrangian submanifolds in $\mathcal{S}^6$.  
Furthermore, it also performs this classification for the case of special Lagrangian 3-folds in $\C^3$.  
This should help lead to a solution of the isometric embedding problem for these submanifolds.  
%The author is working on this for the Lagrangian situation and hopes to report on his findings in the future.
\end{note}

\begin{proof}
Since the set of $A\in\Sym_3(\R)$ with distinct eigenvalues is dense in $\Sym_3(\R)$, and every 
element of $\Sym_3(\R)$ is $\SO(3)$-equivalent to a diagonal matrix, we need only 
turn the strict inequalities in Proposition \ref{diagprop} to equalities to give our conditions.  

Part (i) follows from \eq{trkeq}.  For (ii), notice that $\sigma(K)=0$ forces $\lambda_1=0$ and \eq{sigma0} leads to 
$\lambda_2\lambda_3=0$.  The additional assumption that $K\neq 0$ means $\lambda_2=0$.  
Diagonalise $K$ to $\diag(\lambda_3,0,0)$.   We can explicitly calculate
the cubics $h$ which map to this diagonal matrix as:
$$ay(y^2-3z^2)+bz(3y^2-z^2)$$
where $a,b$ satisfy $2a^2+2b^2+\Tr K=0$.  Using $\SO(3)$ to set $b=0$, we see from Table \ref{symtable} that 
$h$ has an $\SSS_3$-stabilizer.  
  
For (iii), by diagonalising
$K=K(h)$ to $\diag(\lambda_1,\lambda_2,\lambda_3)$, we notice from the proof of Proposition \ref{diagprop} that 
we must have $h_{112}=h_{332}=h_{113}=h_{223}=h_{123}=0$.  Therefore 
this $h$ is, up to sign,
$$\left(\frac{p+q}{2}\right)x(2x^2-3y^2-3z^2)+3\left(\frac{q-p}{2}\right)x(y^2-z^2)$$
where $p=\sqrt{\frac{1}{2}(\lambda_3-\Tr K)}$ and $q=\sqrt{\frac{1}{2}(\lambda_2-\Tr K)}$.  
Notice that $p$ and $q$ are both non-zero, since otherwise $3\sigma(K)=-\lambda_1^2<0$, a contradication.  Moreover,
$q^2-p^2=\frac{1}{2}(\lambda_2-\lambda_3)>0$. Thus, $h$ is a cubic with $\Z_2$-symmetry by Table \ref{symtable} 
with parameters $r=\frac{1}{2}(p+q)$ and $s=\frac{1}{2}(q-p)$ satisfying $r>s>0$.  

A cubic $h$ with $\Z_2$-symmetry, with parameters $r,s$ as in 
Table \ref{symtable}, defines a matrix $K(h)$ with eigenvalues $r^2-s^2$ and $-3r^2-s^2\pm 4rs$.  Thus, $K(h)$ has $\sigma(K)=(r^2-s^2)^2$.  
If the eigenvalues of $K(h)$ are $\lambda_1>\lambda_2>\lambda_3$, then $\lambda_1=r^2-s^2$ if $r>s$ and 
$\lambda_2=r^2-s^2$ if $r<s$.  This proves (iii). %This shows that part (iii) cannot be improved to an `if and only if'.
%If we assume that a cubic $h$ has a $\Z_2$-symmetry, we want to see when it maps 
%to $K(h)$ with distinct eigenvalues satisfying $\lambda_1^2=\sigma(K)$.  Using 
%$\SO(3)$ we can transform $h$ to
%$$px(2x^2-3y^2-3z^2)-3qx(y^2-z^2)$$
%for
%$p\geq 0$ and $q\geq 0$ (this is a slight variant of the form given in Table \ref{symtable} but is equivalent).  
%This maps to the diagonal matrix $K$ with eigenvalues $p^2-q^2$, $-3p^2-q^2\pm 4pq$.  We thus
%have repeated eigenvalues if $p=q$ or $p=0$ or $q=0$, so we must have that $p>0$, $q>0$ and $p\neq q$.  We calculate
%$\sigma(K)=(p^2-q^2)^2$.  If $p>q$ then $p^2-q^2=\lambda_1$, the maximum of the eigenvalues, and 
%$\lambda_1^2=\sigma(K)$ so it maps to a matrix $K(h)$ as desired.  If $p<q$, then $-3p^2-q^2+4pq=\lambda_1$ and
% $3(\lambda_1^2-\sigma(K))=8p(p-q)^3<0$, so it does not map to our required matrix $K$.  This completes the argument %for (iii).

Now suppose there are at least two repeated eigenvalues for $K$ with $\lambda_1^2=\sigma(K)>0$.   
If $\lambda_1>\lambda_2=\lambda_3$, 
$\lambda_1+2\lambda_2=\Tr K$ and $3\lambda_1^2=\lambda_2(2\lambda_1+\lambda_2)>0$.
We quickly see that $\lambda_1=-\frac{1}{5}\,\Tr K$ and $\lambda_2=\lambda_3=\frac{3}{5}\,\Tr K$.  Again
by diagonalising $K$ we can solve for $h$ as:
$$rx(2x^2-3y^2-3z^2)$$
where $5r^2+\Tr K=0$. This $h$ has $\SO(2)$-symmetry by Table \ref{symtable}.  
If $\lambda_1=\lambda_2>\lambda_3$, the formulae $2\lambda_1+\lambda_3=
\Tr K$, $3\lambda_1^2=\lambda_1(2\lambda_3+\lambda_1)$ and $\Tr K<0$ imply that $\lambda_1=\lambda_2=0$,
 but this has $\sigma(K)=0$, a contradiction.  Part (iv) follows.  

%There is also the case where $K$ is a negative multiple of the identity where clearly $\lambda_1^2=\sigma(K)>0$.
Finally, suppose $K=-3p^2I$ for $p> 0$, which clearly has $\lambda_1^2=\sigma(K)>0$.  
Clearly, $h=3\sqrt{3}p x(y^2-z^2)$ maps to $K$ and has an $\AAA_4$-stabilizer.  However, 
we can explicitly calculate that the fibre of \eq{Gmap} at 
$K$ contains the aforementioned cubic together with cubics of the form:
$$ px(2x^2-3y^2-3z^2)+\sqrt{2 p^2-q^2}\,y(y^2-3z^2)+q z(3y^2-z^2)$$
%\lambda z(3y^2+3x^2-2z^2)+x\sqrt{2\lambda^2-r^2}(3y^2-x^2)+ry(3x^2-y^2)$$
for $q$ satisfying $q^2\leq 2p^2$.  Since we can use an $\SO(3)$ transformation to set $q=0$, the remarks 
before Table \ref{symtable} show that these cubics also have $\AAA_4$-stabilizer. 
%and the proof of the proposition is complete.
\end{proof}

\begin{remark}
By the proof of Proposition \ref{Kimage}(iii), $K(h)$ has eigenvalues $\lambda_1>\lambda_2>\lambda_3$
satisfying either $\lambda_1^2=\sigma(K)$ or $\lambda_2^2=\sigma(K)$ if and only if $h$ has a $\Z_2$-stabilizer.
%we instead have distinct eigenvalues with $\lambda_2^2=\sigma(K)>0$, then $h$ also has a $\Z_2$-stabilizer in $\SO(3)$.
\end{remark}

\begin{prop}\label{repeat}
Let $K\in\text{\emph{Sym}}_3(\mathbb{R})$ and use the notation of Proposition \ref{Kimage}.  Suppose that
$\lambda_1^2<\sigma(K)$, $\Tr K<0$ and that $K=K(h)$ as in \eq{Gmap}.  
Then $K$ has exactly two repeated eigenvalues if and only if $h$ has a $\Z_3$-stabilizer in $\SO(3)$.  
\end{prop}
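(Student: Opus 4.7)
The plan is to prove both directions of the equivalence.

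For the $(\Leftarrow)$ direction, assume $h$ has $\Z_3$-stabilizer. After an $\SO(3)$-rotation Table \ref{symtable} puts $h$ in the form $rx(2x^2-3y^2-3z^2) + ay(y^2-3z^2)$ with $r,a>0$ and $r\sqrt{2}\neq a$. Substituting into \eq{Gmap} yields $K(h) = \diag(r^2-2a^2,\,-3r^2,\,-3r^2)$, which has exactly two repeated eigenvalues precisely because the exclusion $r\sqrt{2}\neq a$ prevents all three from coinciding. I then check $\Tr K = -5r^2 - 2a^2 < 0$ and compute $\sigma(K) = r^2(r^2+4a^2)$; the quantity $\sigma(K)-\lambda_1^2$ factors as $4a^2(2r^2-a^2)$ or $4r^2(a^2-2r^2)$ in the subcases $r\sqrt{2}>a$ or $r\sqrt{2}<a$, each strictly positive, so the hypothesis $\lambda_1^2<\sigma(K)$ is satisfied.

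For the $(\Rightarrow)$ direction, assume $K = K(h)$ has exactly two repeated eigenvalues together with the stated inequalities. I first eliminate every stabilizer class of Table \ref{symtable} except $\Z_3$ and the trivial one: the classes $\SO(3)$, $\SO(2)$, $\AAA_4$ and $\SSS_3$ give either $K=0$ or $\lambda_1^2 = \sigma(K)$ by parts (i), (ii), (iv), (v) of Proposition \ref{Kimage}, each contradicting the strict inequality $\lambda_1^2 < \sigma(K)$ (or $\Tr K < 0$); and the $\Z_2$-stabilizer case yields three distinct eigenvalues for $K$ by the remark immediately after Proposition \ref{Kimage}, contradicting the repeated-eigenvalue hypothesis. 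So $h$ must have stabilizer either $\Z_3$ or trivial.

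The remaining and main step is to exclude the trivial stabilizer. My plan is to use $\SO(3)$ to diagonalize $K = \diag(\lambda_a,\lambda_b,\lambda_b)$ and then analyze the degenerate limit of the explicit parameterization derived in the proof of Proposition \ref{diagprop}. Writing $\lambda_2 = \lambda_3 + \epsilon$ in the case $\lambda_a > \lambda_b$ and sending $\epsilon\to 0^+$, the diverging factors $(\lambda_2-\lambda_3)^{-1}$ in the expressions for $h_{332}$ and $h_{223}$ force $h_{112} = h_{113} = h_{123} = 0$ and pin the parameter $h_{331}$ to satisfy $h_{331}^2 = -\lambda_b/3$, while $h_{332}$ and $h_{223}$ survive as two free real parameters subject only to $K_{11}(h)=\lambda_a$. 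This recovers $h$ in the form $r'x(2x^2-3y^2-3z^2) + ay(y^2-3z^2) + bz(3y^2-z^2)$, which is visibly $\Z_3$-invariant; combined with the eliminations above, the stabilizer is exactly $\Z_3$. The case $\lambda_a < \lambda_b$ is treated identically with $\lambda_1 - \lambda_2 \to 0$ and the $\Z_3$-axis placed along $z$. The delicate point throughout is this blow-up analysis; an equivalent, more computational route is to impose the four conditions $K_{12} = K_{13} = K_{23} = 0$ and $K_{22} = K_{33}$ on the seven-parameter family of harmonic cubics, exploit the one-dimensional $\OO(2)$-stabilizer of $\diag(\lambda_a,\lambda_b,\lambda_b)$ in $\SO(3)$ to normalize $h$, and verify algebraically that every remaining solution lies in the three-dimensional $\Z_3$-invariant subspace.
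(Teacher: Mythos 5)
Your $(\Leftarrow)$ direction and your elimination of the $\SO(3)$, $\SO(2)$, $\AAA_4$, $\SSS_3$ and $\Z_2$ stabilizer classes via Proposition \ref{Kimage} and the remark following it are correct, and agree with what the paper does implicitly. The gap is in the step you yourself flag as delicate: excluding the trivial stabilizer by a blow-up analysis of the formulas in the proof of Proposition \ref{diagprop}. Those formulas are derived under the assumption $\lambda_1>\lambda_2>\lambda_3$ (they involve division by $\lambda_1-\lambda_2$ and $\lambda_2-\lambda_3$), so they parameterize the fibres of \eq{Gmap} only over nondegenerate $K$. Sending $\lambda_2-\lambda_3\to 0^+$ and demanding boundedness tells you which solutions over \emph{nearby} nondegenerate matrices converge; it shows that certain $\Z_3$-symmetric cubics lie in the fibre over the degenerate $K$, but it does not show that \emph{every} $h$ with $K(h)=\diag(\lambda_a,\lambda_b,\lambda_b)$ arises as such a limit. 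Since the $(\Rightarrow)$ direction requires that every element of the degenerate fibre be $\Z_3$-invariant, the containment you obtain goes the wrong way, and the fibre over a degenerate $K$ is governed by a genuinely different system (with a larger residual symmetry group), so nothing forces $h_{112}=h_{113}=h_{123}=0$ for an arbitrary fibre element without solving that system directly.

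What is needed — and what the paper actually does, albeit tersely — is the direct computation at the degenerate matrix: write $\lambda_2=\lambda_3=-3r^2$, use $\Tr K<0$ and $\lambda_1^2<\sigma(K)$ to write $\Tr K=-5r^2-2a^2$ with $0<a$, $a\neq r\sqrt{2}$, and solve $K(h)=\diag(r^2-2a^2,-3r^2,-3r^2)$ for $h$ outright, finding that the full fibre consists of the cubics \eq{z3cubic} with $b^2\leq a^2<2r^2$ (and similarly when the repeated eigenvalue is the larger one, giving $a>r\sqrt{2}$); an $\SO(3)$ rotation then reduces to Table \ref{symtable} and gives the $\Z_3$-stabilizer. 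Your proposed ``equivalent, more computational route'' is essentially this argument, not an equivalent alternative to the limit argument, and in your write-up it is only sketched, with the normalization and the verification that all solutions lie in the $\Z_3$-invariant subspace left unperformed. As it stands, then, the central step of the $(\Rightarrow)$ direction is not established.
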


\begin{proof}  Suppose first that the eigenvalues satisfy $\lambda_1>\lambda_2=\lambda_3$.
By assumption $3\lambda_1^2< \lambda_2(2\lambda_1+\lambda_2)$.  
Using $\lambda_1+2\lambda_2=\Tr K$, we see that $\frac{3}{5}\,\Tr K<\lambda_2<\frac{1}{3}\,\Tr K$.
We can write $\lambda_2=\lambda_3=-3r^2$ for some $r>0$, since
$\lambda_2<0$.  We know that $\Tr K< -5r^2$, so there exists $a>0$ such that $\Tr K=-5r^2-2a^2$.  
One quickly sees that $\lambda_1=r^2-2a^2$ and that $\lambda_2<\frac{1}{3}\,\Tr K$ if and only if $a<r\sqrt{2}$.
Diagonalizing $K$ we recognise it as the image of cubics of the form 
\begin{equation}\label{z3cubic}
rx(2x^2-3y^2-3z^2)+\sqrt{a^2-b^2}\,y(y^2-3z^2)+bz(3y^2-z^2),
\end{equation}
where $b^2\leq a^2<2r^2$.  Using $\SO(3)$ to set $b=0$, we see from Table \ref{symtable} that these cubics 
have $\mathbb{Z}_3$-stabilizer.
%The parameters $r$ and $a$ are determined by $K$, but $b$ is free to vary in an open interval.  Thus 
%the Gauss map has 1-dimensional fibre at $K=K(h)$.
 If $\lambda_1=\lambda_2>\lambda_3$, similar arguments show that $K$ is the image of 
cubics of the form \eq{z3cubic}, but now with $a>r\sqrt{2}$. 
\end{proof}

As a neat corollary, by analysing the proofs of Propositions \ref{diagprop}-\ref{repeat}% and calculating the kernel of the linearisation of \eq{Gmap} at cubics with symmetries
, we can collect together certain of our results concerning the Gauss map in
 terms of stabilizers of the fundamental cubic in $\SO(3)$.

\begin{cor}\label{htable}
Let $K\in\Sym_3(\R)$ with eigenvalues $\lambda_1\geq\lambda_2\geq\lambda_3$ 
be such that $K=K(h)$ as in \eq{Gmap}.  Use the notation of Proposition \ref{Kimage}.  
We can present Table \ref{fibretable} for stabilizers of $h$ in $\SO(3)$,
 properties of $K$ and the dimension of the fibres of the Gauss map \eq{Gmap} at $K$.  
For the case of zero stabilizer, the fibre of \eq{Gmap} has dimension 1 at generic $K$.

\begin{table}[h]
\begin{center}
\begin{tabular}{|c|c|c|}
\hline
Stabilizer of $h$ & Property of $K$ & Dimension\\
\hline
$\SO(3)$ & $K=0$ & 0 \\
$\SO(2)$ & $\lambda_1^2=\sigma(K)>0$, so $\lambda_1>\lambda_2=\lambda_3$ & 2\\
$\AAA_4$ & $K=-\lambda\Id_3$, $\lambda> 0$ & 3\\
$\SSS_3$ & $\lambda_1^2=\sigma(K)=0$, so $\lambda_1=\lambda_2=0>\lambda_3$ & 2\\
$\Z_3$ & $\lambda_1^2<\sigma(K)$, two eigenvalues are repeated & 1\\
$\Z_2$ & $\lambda_1>\lambda_2>\lambda_3$, $\lambda_1^2=\sigma(K)$ or $\lambda_2^2=\sigma(K)$ & 2
\\
$0$ & $\lambda_1>\lambda_2>\lambda_3$, $\lambda_1^2<\sigma(K)$ and $\lambda_2^2\neq\sigma(K)$ & 1\\
\hline
\end{tabular}
\caption{Fibres of the Gauss map and symmetries of the fundamental cubic}\label{fibretable}
\end{center}
\end{table}
\end{cor}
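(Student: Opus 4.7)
The plan is to compile this corollary directly from the classification results already established in Propositions \ref{Kimage} and \ref{repeat}, organising them by the $\SO(3)$-stabiliser of $h$. For the ``Property of $K$'' column I would transcribe the iff-statements: the $\SO(3)$, $\SSS_3$, $\SO(2)$, and $\AAA_4$ rows come from parts (i), (ii), (iv), and (v) of Proposition \ref{Kimage} respectively; the $\Z_3$ row is Proposition \ref{repeat}; and the $\Z_2$ row combines Proposition \ref{Kimage}(iii) with the Remark immediately following it. The trivial-stabiliser row is then obtained by elimination: since the preceding six rows exhaust the cases where $h$ has a nontrivial $\SO(3)$-stabiliser, and since every $K$ in the image of the Gauss map satisfies $\Tr K\leq 0$, $\sigma(K)\geq 0$, and $\lambda_1^2\leq\sigma(K)$ by Proposition \ref{Kimage}, the complement must correspond to $\lambda_1>\lambda_2>\lambda_3$ with $\lambda_1^2<\sigma(K)$ and $\lambda_2^2\neq\sigma(K)$.

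For the ``Dimension'' column I would read each entry off the explicit preimage descriptions furnished in the proofs of Propositions \ref{diagprop}, \ref{Kimage} and \ref{repeat}. The generic-$K$ fibre in Proposition \ref{diagprop} is parametrised by a single real number (with discrete sign choices), which accounts for the dimension $1$ entry in the trivial-stabiliser row. The remaining rows I would handle by quoting the explicit families: the essentially unique cubic $rx(2x^2-3y^2-3z^2)$ in Proposition \ref{Kimage}(iv), the circle $a^2+b^2=-\frac{1}{2}\Tr K$ of Proposition \ref{Kimage}(ii), the free parameter $q$ satisfying $q^2\leq 2p^2$ of Proposition \ref{Kimage}(v), and the analogous $\Z_3$-parameterisation of Proposition \ref{repeat}, in each case combined with the $\SO(3)$-orbit contribution coming from the stabiliser of $K$ acting within the fibre.

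The main obstacle is the careful accounting in the $\Z_2$ and mixed rows, where the continuous fibre parameter from Proposition \ref{diagprop} must be reconciled with the discriminant equalities $\lambda_1^2=\sigma(K)$ or $\lambda_2^2=\sigma(K)$, and where one must ensure that no additional solutions outside the parameterisations above exist. I would verify this by direct calculation at representative $K$ in each stratum and check agreement with the listed dimensions on a case-by-case basis, relying on the fact that the image of the Gauss map decomposes cleanly into the seven disjoint strata described by Table \ref{symtable}, with no overlap at interior points.
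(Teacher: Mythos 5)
Your plan for the ``Property of $K$'' column is sound and is essentially what the paper does (the paper offers nothing beyond ``analyse the proofs of Propositions \ref{diagprop}--\ref{repeat} and collect''): parts (i), (ii), (iii)+Remark, (iv), (v) of Proposition \ref{Kimage} and Proposition \ref{repeat} give the first six rows, and the elimination argument for the trivial row is correct. The genuine gap is in the ``Dimension'' column. Your recipe --- read off the explicit solution families in the proofs and add ``the $\SO(3)$-orbit contribution coming from the stabiliser of $K$ acting within the fibre'' --- does not produce the listed entries, because in exactly the rows where the table says $2$ the set-theoretic fibre is smaller and the stabiliser orbits buy you nothing (or are already counted). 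Concretely: in the $\SO(2)$ row, Proposition \ref{Kimage}(iv) itself forces every $h$ in the fibre over $K=\diag(\lambda_1,\lambda_2,\lambda_2)$ to have an $\SO(2)$-stabiliser, so the fibre is $\{\pm r\,x(2x^2-3y^2-3z^2)\}$ with $5r^2=-\Tr K$; the identity component of the stabiliser of $K$ fixes this cubic, so your accounting yields dimension $0$, not $2$. In the $\SSS_3$ row the circle $2(a^2+b^2)=-\Tr K$ \emph{is} the orbit of the stabiliser of $\diag(\lambda_3,0,0)$, so you get $1$, not $2$ (adding the orbit on top of the circle would be double counting). In the $\Z_2$ row the equality $\lambda_1^2=\sigma(K)$ pins the free parameter $r$ in the proof of Proposition \ref{diagprop}, the solutions become a finite set, and the stabiliser of a $K$ with distinct eigenvalues is discrete, so again you get $0$, not $2$.

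The entries $2,2,3,2,1,2,1$ are instead the dimensions of the kernel of the linearisation of the Gauss map \eq{Gmap} at the corresponding cubics (the space of first-order solutions of $K(h+\varepsilon g)=K(h)$): for instance, at $h=r\,x(2x^2-3y^2-3z^2)$ one computes that $\d K_h(g)=0$ for harmonic $g$ exactly when $g\in\langle y(y^2-3z^2),\,z(3y^2-z^2)\rangle$, a $2$-dimensional space, even though these directions leave the set-theoretic fibre at second order; analogous computations give $2$ at an $\SSS_3$- or $\Z_2$-cubic, $3$ at an $\AAA_4$-cubic, and $1$ at a $\Z_3$-cubic or a generic cubic, matching Table \ref{fibretable}. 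So to prove the corollary as stated you must either adopt and verify this infinitesimal notion of fibre dimension row by row, or else confront the fact that the naive set-theoretic count from the explicit parameterisations disagrees with the table in the $\SO(2)$, $\SSS_3$ and $\Z_2$ rows; your proposal, which commits to the set-theoretic reading and to a stabiliser-orbit bookkeeping that is vacuous or redundant precisely there, would fail to reproduce those three entries.
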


%\begin{proof}
%By the work in this section the only part of this proposition that requires proof are the dimension entries in Table 
%\ref{fibretable} for the cubics with symmetries.  
%By the Implicit Function Theorem, these dimensions may be determined by calculating the dimension of the kernel of 
%the linearisation of \eq{Gmap} at the cubics $h\in\mathcal{H}^3(\R^3)$ in Table \ref{symtable}.  
%This amounts to finding the solutions $u\in\mathcal{H}^3(\R^3)$ to the following equations:
%$$\sum_qh_{ikq}u_{jlq}+h_{jlq}u_{ikq}-h_{ilq}u_{jkq}-h_{jkq}u_{ilq}=0$$
%for all $i,j,k,l\in\{1,2,3\}$.  The result now follows from a straightforward calculation using Table \ref{symtable} and 
%the above formula.
%\end{proof}

%\vspace{-24pt}

\section[Lagrangian submanifolds and fundamental cubics with symmetries]{Lagrangian submanifolds and fundamental\\ cubics with symmetries}\label{s6}

In this section we classify the families of Lagrangians in $\mathcal{S}^6$ 
whose fundamental cubic admits a pointwise symmetry.  This is mainly a detailed survey of results 
by other authors, though we also include new results and observations. 
%In this section we give a detailed version of the survey sketched in \cite{Vrancken2}.
%the survey which mirrors the main part of Bryant's program as laid out in \cite{Bryant2}.  
%The idea is to impose a pointwise symmetry on the fundamental cubic and then classify all Lagrangian 
%submanifolds which have this property.  
%The material here is not new, but we hope to give more detail and a fresh perspective 
%on it.

\medskip

For this section let $L$ be a Lagrangian submanifold of $\mathcal{S}^6$ with fundamental cubic $C_L$ and suppose, for simplicity, that
it is connected.  We use the notation of $\S$\ref{s3subs1}.  In particular, recall that $\{2\omega_1,2\omega_2,2\omega_3\}$ defines 
an orthonormal coframe for $L$ and that $\alpha+[\omega]$ is the connection 1-form of the Levi--Civita connection of the
metric $g_L$ on $L$.  Since $\alpha$ is skew-symmetric, for convenience we shall write 
$$\alpha_1=\alpha_{23},\qquad \alpha_2=\alpha_{31},\qquad \alpha_3=\alpha_{12}.$$
%We know from Table \ref{symtable} that we need only consider the following possible stabilizers: $\SO(3)$, $\SO(2)$, $\AAA_4$, $\SSS_3$, $\Z_3$ and $\Z_2$. 
% We therefore organise the results and examples by the pointwise stabilizer of $C_L$.  
We organise our results and examples by the possible pointwise stabilizers of $C_L$ as given in Table \ref{symtable}. 
 To rule out trivial cases we make the following definition.

\begin{dfn}
A Lagrangian $L$ in $\mathcal{S}^6$ is \emph{simple} if it is a totally geodesic $\mathcal{S}^3$.  
\end{dfn}

\subsection[SO(3)]{{\boldmath $\SO(3)$}}\label{s6subs1}

The stabilizer of $C_L$ in $\SO(3)$ is all of $\SO(3)$ at every point if and only if $C_L=0$ by Proposition \ref{s2prop2a}.  
Using \eq{Ga} we see that 
$$\d(\alpha+[\omega])+(\alpha+[\omega])\w(\alpha+[\omega])=4\omega\w\omega^{\rm T},$$
so $L$ has constant curvature $1$.  Thus, $L$ is totally geodesic and hence simple.

\begin{prop}\label{s4prop1}  A connected Lagrangian submanifold
of $\mathcal{S}^6$ whose fundamental cubic has an $\SO(3)$-stabilizer at each point is simple.
\end{prop}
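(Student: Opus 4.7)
The plan is to unwind the definitions: a pointwise $\SO(3)$-stabilizer of a harmonic cubic on $\R^3$ forces the cubic to vanish, so $C_L\equiv 0$ identically on $L$.

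First I would invoke Proposition \ref{s2prop2a} (Table \ref{symtable}): the only harmonic cubic on $\R^3$ whose $\SO(3)$-stabilizer is all of $\SO(3)$ is the zero cubic. Applied pointwise, this gives $h_{ijk}\equiv 0$ on $L$. Since the second fundamental form is $h_L=4h_{ijk}J\bfe_i\otimes \omega_j\omega_k$, this shows $h_L\equiv 0$, i.e.\ $L$ is totally geodesic in $\mathcal{S}^6$. Equivalently, from \eq{secondLb} we have $\beta=2h\omega\equiv 0$, and then the Gauss equation \eq{Ga} collapses to
\begin{equation*}
\d(\alpha+[\omega])+(\alpha+[\omega])\w(\alpha+[\omega])=4\,\omega\w\omega^{\rm T},
\end{equation*}
which is the structure equation for a metric of constant sectional curvature $1$, consistent with total geodesicity.

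Finally I would conclude by the standard fact that a connected, complete, totally geodesic $3$-dimensional submanifold of the round $\mathcal{S}^6$ is (an open subset of) a great $3$-sphere $\mathcal{S}^3\subset\mathcal{S}^6$; since $L$ is real analytic by the corollary following Proposition \ref{coasscones} and connected, it is contained in such an $\mathcal{S}^3$. Hence $L$ is simple in the sense of the definition above. There is no real obstacle here: the content is entirely in the algebraic observation from Table \ref{symtable} together with the Gauss equation already recorded in Proposition \ref{s2prop2}, and the proof is essentially the paragraph preceding the statement, which I would simply tidy up.
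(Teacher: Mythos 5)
Your proposal is correct and follows essentially the same route as the paper: Proposition \ref{s2prop2a} forces $C_L\equiv 0$, hence $h_L\equiv 0$, and a connected totally geodesic $3$-fold in $\mathcal{S}^6$ is (part of) a great $\mathcal{S}^3$, i.e.\ simple. The paper merely phrases the middle step through the Gauss equation (constant curvature $1$) before concluding total geodesicity, which in your write-up appears only as a consistency check; this is an inessential difference.
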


\begin{ex}\textbf{(Simple case)}\label{simple}
A simple Lagrangian 
is the intersection of a linear coassociative 4-plane in $\Im\O$ with $\mathcal{S}^6$ by Proposition \ref{coasscones}.  
Since $\GGG_2$ acts transitively 
on the set of coassociative 4-planes with isotropy $\SO(4)$, a simple Lagrangian has $\SO(4)$-symmetry and is 
(up to $\GGG_2$ transformation)
$$L_0=\{y_1\eps_1+y_3\eps_3+y_5\eps_5+y_7\eps_7\,:\,y_1^2+y_3^2+y_5^2+y_7^2=1\},$$
recalling the basis $\eps_i$ for $\Im\O$.  
Notice that $L_0$ is trivially ruled.  Furthermore, $L_0$ is a tube radius 
$\frac{\pi}{2}$ about a totally geodesic 2-sphere, where the tube is defined using the standard Hopf fibration 
$\mathcal{S}^3\rightarrow\mathcal{S}^2$. 
\end{ex}

\subsection[SO(2)]{{\boldmath $\SO(2)$}}

To give a feel for later, more complicated, calculations which will often be suppressed, we 
go through this case in some detail.

If $C_L\neq 0$ has an $\SO(2)$-stabilizer at each point, 
then, by Proposition \ref{s2prop2a}, there exist an open dense subset $L^*$ of $L$ and 
some function $r:L^*\rightarrow\mathbb{R}^+$ 
such that $C_L=r\omega_1(2\omega_1^2-3\omega_2^2-3\omega_3^2)$ defines an $\SO(2)$-subbundle $\mathcal{F}$ of the 
adapted frame bundle over $L^*$.  
 %\begin{remark} Technically, a priori $r$ is only defined on some open dense subset of $L$, but we shall 
%see that $r$ is a positive constant and so defined on all of $L$.
%\end{remark}
Since $\mathcal{F}$ is an $\SO(2)$-bundle there 
exist functions $t_{ij}$ for $i=2,3$, $j=1,2,3$ such that
$\alpha_2=t_{2j}\omega_j$ and $\alpha_3=t_{3j}\omega_j$, using summation notation. 
Moreover, there exist functions $r_i$ for $i=1,2,3$ such that $\d r=r_i\omega_i$.  

Define $\beta_{ij}$ in terms of $r$ and $\omega_i$ using \eq{secondLb}.  The equations \eq{secondLa} and \eq{secondLd}  then give  
$t_{21}=t_{31}=r_2=r_3=0$, $t_{22}=t_{33}=-\frac{1}{2}$, $t_{23}=-t_{32}=t$ and $r_1=-4rt$.  
Putting this information in \eq{secondLc} forces $t=0$ and $r=2\sqrt{5}$.  Thus, we may take 
$L^*=L$, and see that $C_L=2\sqrt{5}\omega_1(2\omega_1^2-3\omega_2^2-3\omega_3^2)$ over $L$.

The second structure equations we have so far are:
\begin{subequations}\label{secondso2}
\begin{gather}
\d\omega_1=\omega_2\wedge\omega_3;\!\qquad\!\d\omega_2=\omega_3\wedge(\alpha_1+\textstyle\frac{3}{2}\omega_1);\!\qquad\!
\d\omega_3=(\alpha_1+\textstyle\frac{3}{2}\omega_1)\wedge\omega_2;\\
\d(\alpha_1+\textstyle\frac{3}{2}\omega_1)=6\omega_2\wedge\omega_3.
\end{gather}
\end{subequations}
We see that the structure equations for $\omega_2$ and $\omega_3$ define a constant curvature 2-sphere. 
Equations \eq{firstL} with $\omega_2=\omega_3=0$ give: 
\begin{equation}\label{firstso2}
%\begin{align}
\d\mathbf{x}=2\mathbf{e}_1\omega_1;\qquad
\d\mathbf{e}_1=-2\mathbf{x}\omega_1+\sqrt{5}J\mathbf{e}_1\omega_1;\qquad
\d J\mathbf{e}_1=-\sqrt{5}\mathbf{e}_1\omega_1.
%\end{align}
\end{equation}
Clearly, \eq{firstso2} defines a circle with radius $\frac{2}{3}$.

We also notice that the Lie derivative $\mathcal{L}_{\mathbf{e}_1}(\omega_1^2+\omega_2^2+\omega_3^2)=0$, so that
$\mathbf{e}_1$ is a Killing vector for the metric.  Thus, $L$ is homogeneous and topologically $\mathcal{S}^3$ 
fibred by circles over $\mathcal{S}^2$.  Moreover, by inspection of \eq{secondso2}, 
$L$ is an $\SU(2)$-orbit in $\mathcal{S}^6$ for some $\SU(2)$ subgroup $\GGG$ of $\GGG_2$.  
 Clearly, $\GGG$ must have a commuting $\U(1)$ subgroup in $\GGG_2$ because of the circle
 fibration.  Calculating the eigenvalues of the generators of the Lie algebra $\mathfrak{g}$ of $\GGG$, 
 we recognise its action on $\Im\O\cong\R^3\oplus\C^2$ as $\SU(2)$ on $\C^2$ and $\SO(3)$ on 
$\R^3$.  Here, $\R^3$ is spanned by $\{\eps_1,\eps_2,\eps_3\}$ and $\C^2$ by $\{\eps_4+i\eps_6,\eps_5+i\eps_7\}$.  Explicitly, the generators of $\mathfrak{g}$ are:
\begin{align*}
U_1&=-2E_{23}+E_{45}+E_{67};\\
U_2&=-2E_{31}+E_{46}-E_{57};\\
U_3&=-2E_{12}-E_{47}-E_{56},
\end{align*}
where 
\begin{equation}
\begin{gathered}\label{elements}
E_{ij}(\eps_k)=\left\{\begin{array}{lll} \eps_i & & \text{if $j=k$,}\\ -\eps_j & & \text{if $k=i$,}\\ 
0 & & \text{otherwise.}
\end{array}\right.
\end{gathered}
\end{equation}
  Harvey and Lawson \cite[Theorem IV.3.2]{HarLaw} classify the coassociative submanifolds invariant under this
$\SU(2)$ action, hence the Lagrangian submanifolds which are orbits of this action on $\mathcal{S}^6$.  This
result also follows from \cite[Theorem 4.1]{Mashimo}. 

\begin{ex}\textbf{(A ``squashed'' 3-sphere)}\label{s4ex2}  Let $\{\eps_1,\ldots,\eps_7\}$ be the basis of $\Im\O$ given at 
the start of $\S$\ref{s3}.  
The 3-dimensional submanifold of $\mathcal{S}^6$ given by
$$
L_1=\left\{\frac{\sqrt{5}}{3}\,\bar{q}\eps_1 q+\frac{2}{3}\,q\eps_5\,:\,\text{$q\in\langle 1,\eps_1,
\eps_2,\eps_3\rangle_{\R}$ with $|q|=1$}\right\}$$
is Lagrangian.  Let $\mathcal{S}^3$ be the unit 3-sphere in $\R^4$ with coordinates $x_1,x_2,x_3,x_4$.  
Following \cite[Example 5.1]{Dillen}, define vector fields on $\mathcal{S}^3$ by:
\begin{align*}
\bfw_1&=x_2\bfv_1-x_1\bfv_2+x_4\bfv_3-x_3\bfv_4;\\
\bfw_2&=x_3\bfv_1-x_4\bfv_2-x_1\bfv_3+x_2\bfv_4;\\
\bfw_3&=x_4\bfv_1+x_3\bfv_2-x_2\bfv_3-x_1\bfv_4,
\end{align*}
where $\bfv_i=\frac{\partial}{\partial x_i}$.  These vectors form a basis for $T\mathcal{S}^3$, 
so we can define a metric $g_1$ on $\mathcal{S}^3$ by requiring that the $\bfw_i$ are orthogonal with respect
to $g_1$,
$$g_1(\bfw_1,\bfw_1)=\frac{4}{9}\quad\text{and}\quad g_1(\bfw_2,\bfw_2)=g_1(\bfw_3,\bfw_3)=\frac{8}{3}\,.$$
By \cite[Theorem 5.1]{Dillen}, $L_1$ is 
the isometric embedding of $(\mathcal{S}^3,g_1)$ via the map 
 $q\mapsto \frac{\sqrt{5}}{3}\bar{q}\eps_1q+\frac{2}{3}q\eps_5$,
 where we identify $\mathcal{S}^3$ with the unit sphere in $\langle 1,\eps_1,\eps_2,\eps_3\rangle_{\R}$.  
\end{ex}

\begin{remarks}
By scaling $L_1$, we recognise it as the graph of the Hopf map 
$\mathcal{S}^3\rightarrow\mathcal{S}^2$ given by $q\mapsto \frac{\sqrt{5}}{2} \bar{q}\eps_1 q$. Furthermore, although $L_1$ is 
an $\SU(2)$-orbit, it is in fact invariant under an action of $\U(2)$, since there is an extra commuting $\U(1)$-action 
which results from the circle fibration.
\end{remarks}

\begin{prop}\label{s4prop2}
The unique (up to rigid motion) connected, non-simple Lagrangian submanifold of $\mathcal{S}^6$ whose fundamental cubic 
has an $\SO(2)$-stabilizer at each point is $L_1$ given in Example \ref{s4ex2}.
\end{prop}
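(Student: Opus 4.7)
The plan is to leverage the detailed frame calculation already carried out in the discussion preceding the proposition. That analysis shows that if $L$ is a non-simple connected Lagrangian whose fundamental cubic has $\SO(2)$-stabilizer at every point, then the adapted frame bundle reduces to an $\SO(2)$-subbundle $\mathcal{F}$ on which the structure equations force $C_L = 2\sqrt{5}\,\omega_1(2\omega_1^2-3\omega_2^2-3\omega_3^2)$ (so $r$ is a universal constant), and the connection forms reduce to the specific form given in \eq{secondso2}. In particular $\mathbf{e}_1$ is a Killing vector and $L$ is homogeneous.

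First I would make the homogeneity argument precise: because all the functions entering the first and second structure equations $\eq{firstL}$ and $\eq{secondL}$ are constants on $\mathcal{F}$, the Maurer--Cartan equations pulled back to $\mathcal{F}$ are those of a fixed Lie subalgebra $\mathfrak{g}\subset\g2$. Hence $\mathcal{F}$ is (a component of) the image of a homomorphism from a simply connected Lie group with Lie algebra $\mathfrak{g}$ into $\GGG_2$, and $L$ is the corresponding orbit in $\mathcal{S}^6\cong\GGG_2/\SU(3)$. Reading off the generators $U_1,U_2,U_3$ in the discussion identifies $\mathfrak{g}\cong\mathfrak{su}(2)$ with its explicit embedding in $\g2$, and the $\U(1)$ centralizer produced by the circle fibration is also visible from those generators.

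Next I would invoke the classification in \cite[Theorem IV.3.2]{HarLaw} (equivalently \cite[Theorem 4.1]{Mashimo}) of coassociative 4-folds in $\Im\O$ invariant under the $\SU(2)$ generated by $U_1,U_2,U_3$. By Proposition \ref{coasscones} these are exactly the cones on Lagrangian $\SU(2)$-orbits in $\mathcal{S}^6$. Since the normalizer of $\SU(2)$ in $\GGG_2$ preserves the set of such orbits, and since two Lagrangian $\SU(2)$-orbits of the same dimension with the same induced fundamental cubic are congruent under this normalizer (by uniqueness of the adapted frame up to the residual $\SO(2)$), the classification produces a unique orbit up to rigid motion carrying our prescribed fundamental cubic.

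Finally I would verify that the explicit submanifold $L_1$ of Example \ref{s4ex2} realises this orbit. Concretely, the map $q\mapsto \tfrac{\sqrt{5}}{3}\bar{q}\eps_1 q+\tfrac{2}{3}q\eps_5$ is $\SU(2)$-equivariant with respect to the action of unit quaternions $q\in\langle 1,\eps_1,\eps_2,\eps_3\rangle_{\R}$ on $\mathcal{S}^6\subset\Im\O$ generated by $U_1,U_2,U_3$, so its image is a closed $\SU(2)$-orbit. One checks that $\omega$ vanishes on $L_1$ (equivalently that the tangent spaces are totally real), so $L_1$ is Lagrangian, and then computes $C_L$ in the frame $(\mathbf{e}_1,\mathbf{e}_2,\mathbf{e}_3)$ dual to the $U_i$-action: this reproduces the cubic $2\sqrt{5}\,\omega_1(2\omega_1^2-3\omega_2^2-3\omega_3^2)$ demanded by the calculation above, confirming the match. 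I expect the main obstacle to be this last verification: translating the explicit quaternionic parametrisation of $L_1$ into the frame used to derive the structure equations and carefully computing $h_L$ takes some bookkeeping, but it is a finite calculation once the octonionic multiplication table of $\S$\ref{s3} is applied to the tangent vectors generated by $\mathbf{w}_1,\mathbf{w}_2,\mathbf{w}_3$.
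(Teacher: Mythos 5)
Your proposal follows essentially the same route as the paper: the preceding frame-bundle computation forcing $C_L=2\sqrt{5}\,\omega_1(2\omega_1^2-3\omega_2^2-3\omega_3^2)$ with rigid structure equations, the resulting homogeneity of $L$ as an $\SU(2)$-orbit with generators $U_1,U_2,U_3$, and the appeal to the Harvey--Lawson (equivalently Mashimo) classification of the invariant coassociative cones, with Example \ref{s4ex2} identified as the matching orbit. Your added details (the Maurer--Cartan rigidity argument for homogeneity and the direct verification of the cubic of $L_1$, which the paper delegates to \cite{Dillen}) are sound and do not change the method.
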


From Definition \ref{tubes} and Example \ref{s4ex2}, we see that $L_1$ is a tube of radius $\sin^{-1}(\frac{2}{3})$ 
about a totally geodesic 2-sphere.
%admits a $\frac{2}{3}$-ruling and so is quasi-ruled.  Moreover, using 
%the structure equations and the formula for the metric $g_1$, 
% we recognise $L_1$ as a tube radius $\sin^{-1}(\frac{2}{3})$ about a totally geodesic 2-sphere.  

\subsection[A4]{{\boldmath $\AAA_4$}}

If $C_L\neq 0$ has an $\AAA_4$-stabilizer at each point then, by Proposition \ref{s2prop2a}, there exist an open dense 
subset $L^*$ of $L$ and a function $s:L^*\rightarrow\mathbb{R}^+$, such that
$C_L=3s\omega_1(\omega_2^2-\omega_3^2)$ defines an $\AAA_4$-subbundle $\mathcal{F}$ of the adapted frame bundle over $L^*$.  
 %(Again, a priori $s$ is defined only on an open dense subset of $L$, but we discover that this assumption is unnecessary.) 
Therefore, there exist functions $t_{ij}$ on $\mathcal{F}$ such that $\alpha_i=t_{ij}\omega_j$.  
%Using \eq{secondLa}, \eq{secondLb} and \eq{secondLd} we deduce that $t_{11}=t_{22}=t_{33}=-\frac{1}{2}$ and
%$t_{ij}=0$ for $i\neq j$.  Putting this information in \eq{secondLc} gives that $s$ is constant and equal to 
%$2\sqrt{15}$, 
 Using \eq{secondL}, we find that $\alpha=-\frac{1}{2}[\omega]$ and $s=2\sqrt{15}$.  Therefore, we can take $L^*=L$, and
 $C_L=12\sqrt{15}\,\omega_1(\omega_2^2-\omega_3^2)$ over $L$.   Equation \eq{Ga} gives 
$$\d(\alpha+[\omega])+(\alpha+[\omega])\w(\alpha+[\omega])=\frac{1}{4}\,\omega\w\omega^{\rm T},$$
so $L$ has constant curvature $\frac{1}{16}$.  By Proposition \ref{Kimage}(v), our calculations here and 
in $\S$\ref{s6subs1} prove \cite[Theorem 2]{Ejiri}: the only constant curvature Lagrangian submanifolds of $\mathcal{S}^6$ have
 curvature 1 or $\frac{1}{16}$.

Further, by \cite[Lemma 2.5 \& Theorem 4.3(i)]{Mashimo}, $L$ must be, up to rigid motion, the orbit through $\eps_2$ of 
the 3-dimensional closed Lie subgroup $\GGG$ of $\GGG_2$ whose Lie algebra has the following generators:
\begin{subequations}\label{irractioneqns}
\begin{align}
U_1&=4E_{32}+2E_{54}+6E_{76};\\
U_2&=\sqrt{6}(2E_{51}-E_{62}+E_{73})+\sqrt{10}(E_{42}+E_{53});\\
U_3&=\sqrt{6}(2E_{41}+E_{63}+E_{72})+\sqrt{10}(E_{43}-E_{52}),
\end{align}
\end{subequations}
where $E_{ij}$ is given by \eq{elements}. 
 We have used the fact that all constant 
curvature $\frac{1}{16}$ Lagrangians are congruent up to $\GGG_2$ transformation.  The Lie group $\GGG$ is the $\SO(3)$
subgroup of $\SO(7)$ which acts irreducibly on $\R^7\cong\Im\O$.  We can interpret the group action as follows.

\begin{ex}\textbf{({\boldmath $\SO(3)$}-orbits 1)}\label{irractiondfn}\label{s4ex1}
Identify $\Im\O$ with the homogeneous harmonic cubics $\mathcal{H}^3(\R^3)$ on $\R^3$ by:
\begin{align*}
\eps_1&\mapsto \frac{\sqrt{10}}{10}\,x(2x^2-3y^2-3z^2);&&\displaybreak[0]\\
\eps_2&\mapsto -\sqrt{6}xyz;&
\eps_3&\mapsto \frac{\sqrt{6}}{2}\,x(y^2-z^2);\displaybreak[0]\\
\eps_4&\mapsto -\frac{\sqrt{15}}{10}\, y(4x^2-y^2-z^2);&
\eps_5&\mapsto -\frac{\sqrt{15}}{10}\,z(4x^2-y^2-z^2);\displaybreak[0]\\
\eps_6&\mapsto \frac{1}{2}\,y(y^2-3z^2);&
\eps_7&\mapsto -\frac{1}{2}\, z(z^2-3y^2).\displaybreak[0]
\end{align*}
Notice that the cubics above are of unit norm.  We then recognise the 
generators \eq{irractioneqns} of the Lie algebra of $\GGG$ as acting as
$$U_1=2\left(y\frac{\partial}{\partial z}-z\frac{\partial}{\partial y}\right), 
\;\, U_2=2\left(z\frac{\partial}{\partial x}-x\frac{\partial}{\partial z}\right), 
\;\, U_3=2\left(x\frac{\partial}{\partial y}-y\frac{\partial}{\partial x}\right).$$
Thus the action of $\GGG$ is simply the standard $\SO(3)$ action on $\mathcal{H}^3(\R^3)$, under this 
particular identification with $\Im\O$.

With this identification, the $\SO(3)$-orbit $L_2$ of $-\sqrt{6}xyz$ is Lagrangian in $\mathcal{S}^6$ and has constant curvature $\frac{1}{16}$.  Moreover, $L_2$ is diffeomorphic to $\SO(3)/\AAA_4$ by 
Table \ref{symtable}.  There is an explicit description of $L_2$ in \cite[Example 2]{Dillen} 
as a 24-fold isometric immersion of $\mathcal{S}^3(\frac{1}{16})$ in terms of harmonic polynomials of degree 6.
%, and is thus realised as a 24-fold isometric immersion of $\mathcal{S}^3(\frac{1}{16})$.
\end{ex}

\begin{remark} 
The main result of \cite{Dillen} is that $L_1$ and $L_2$ given in Examples \ref{s4ex2} and \ref{s4ex1} 
respectively, together with the simple example $L_0$, classify all Lagrangians whose sectional curvatures 
are bounded below by $\frac{1}{16}$.
\end{remark}

From our discussion, we can deduce the following result.

\begin{prop}\label{s4prop3} 
The unique (up to rigid motion) 
connected, non-simple Lagrangian submanifold of $\mathcal{S}^6$ 
whose fundamental cubic has
an $\AAA_4$-stabilizer at each point is $L_2$ as given in Example \ref{s4ex1}.
\end{prop}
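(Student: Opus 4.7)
The plan is to combine the structure equation analysis already carried out in this subsection with the classification of constant curvature Lagrangians. First, I would observe that the computation preceding the statement shows that if $C_L$ has $\AAA_4$-stabilizer at each point, then the adapted frame bundle $\mathcal F$ may be taken to be defined over all of $L$ (i.e.\ $L^*=L$), the fundamental cubic is rigidly pinned to $C_L=12\sqrt{15}\,\omega_1(\omega_2^2-\omega_3^2)$, and $\alpha=-\tfrac12[\omega]$. Substitution in \eqref{Ga} yields
\begin{equation*}
\d(\alpha+[\omega])+(\alpha+[\omega])\w(\alpha+[\omega])=\tfrac14\,\omega\w\omega^{\rm T},
\end{equation*}
so $(L,g_L)$ has constant sectional curvature $\tfrac{1}{16}$.

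Next I would invoke the classification of constant curvature Lagrangians: by \cite[Theorem 2]{Ejiri} the only non-simple possibilities have curvature $\tfrac1{16}$, and by \cite[Lemma 2.5 \& Theorem 4.3(i)]{Mashimo} any such Lagrangian is, up to $\GGG_2$-transformation, the orbit through $\eps_2$ of the closed $\SO(3)$-subgroup $\GGG\subset\GGG_2$ whose Lie algebra has the generators $U_1,U_2,U_3$ displayed in \eqref{irractioneqns}. Since rigid motions of $\mathcal{S}^6$ act transitively on the set of such orbits with the claimed stabilizer, this gives uniqueness up to congruence.

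Finally, to identify this orbit with $L_2$ in Example \ref{s4ex1}, I would check that the linear isomorphism $\Im\O\cong\mathcal H^3(\R^3)$ written in Example \ref{s4ex1} is an isometry (each image cubic has unit norm for the standard $L^2$-normalization on $\mathcal H^3(\R^3)$) and intertwines the generators $U_1,U_2,U_3$ with the infinitesimal rotations $2(y\partial_z-z\partial_y)$, $2(z\partial_x-x\partial_z)$, $2(x\partial_y-y\partial_x)$ of $\R^3$. Under this identification $\eps_2$ corresponds to $-\sqrt 6\,xyz$, whose orbit is precisely $L_2$, whose $\AAA_4$-stabilizer follows from Table \ref{symtable}. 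This yields the proposition.

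The main obstacle is the verification of the intertwining: the calculation that each $U_i$ in \eqref{irractioneqns}, acting on the basis $\eps_1,\dots,\eps_7$ of $\Im\O$ via the $E_{jk}$, corresponds under the cubic-parametrization to the standard rotation vector field on $\R^3$ is a careful but elementary check, and is the step on which the identification of the abstract Mashimo orbit with the concrete $L_2$ ultimately rests.
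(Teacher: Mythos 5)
Your proposal is correct and follows essentially the same route as the paper: the structure equations pin down $C_L=12\sqrt{15}\,\omega_1(\omega_2^2-\omega_3^2)$ and $\alpha=-\frac{1}{2}[\omega]$, the Gauss equation gives constant curvature $\frac{1}{16}$, and then Mashimo's results (plus congruence of constant curvature $\frac{1}{16}$ Lagrangians) identify $L$ with the $\SO(3)$-orbit of $\eps_2$, which the harmonic-cubic identification of Example \ref{s4ex1} recognises as $L_2$. The only cosmetic difference is your extra citation of \cite[Theorem 2]{Ejiri}, which is not needed since the curvature value already follows from the Gauss equation, exactly as in the paper.
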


\begin{remark}  In contrast, the only special Lagrangian 3-folds in $\C^3$ whose fundamental cubic has 
a pointwise $\AAA_4$-symmetry are 3-planes \cite[Theorem 2]{Bryant2}. 
%This is the first point of divergence from \cite{Bryant2}.  In that situation, there 
%were no non-trivial examples whose fundamental cubic had $\AAA_4$-stabilizer.
\end{remark}

To see the ruling of $L_2$, we first notice, by the remarks after Proposition \ref{s2prop2a}, that $C_{L_2}$ is $\SO(3)$-equivalent to 
$2\sqrt{5}\omega_1(2\omega_1^2-3\omega_2^2-3\omega_3^2)+2\sqrt{10}\omega_2(\omega_2^2-3\omega_3^2)$.  The second
 structure equations are:
\begin{align*}
\d\omega_1=\omega_2\w\omega_3;\qquad\d\omega_2=\omega_3\w\omega_1;\qquad\d\omega_3=\omega_1\w\omega_2.
\end{align*}
By inspection, the equations for $\omega_2$ and $\omega_3$ define a 2-sphere of constant curvature.  
Moreover, the first structure equations with $\omega_2=\omega_3=0$ yield equations \eq{firstso2} as in the $\SO(2)$-stabilizer
case and hence define a circle of radius $\frac{2}{3}$.  Thus $L_2$ has a $\frac{2}{3}$-ruling over a constant curvature $\mathcal{S}^2$. 
 
This 2-sphere cannot be totally geodesic, otherwise the corresponding Lagrangian would be $L_1$ given in Example \ref{s4ex2}.  
By \cite[Theorem B]{Sekigawa}, the only possible constant Gauss curvatures of pseudoholomorphic curves in $\mathcal{S}^6$ are $0$, $\frac{1}{6}$ and $1$.  Therefore the $\mathcal{S}^2$ must have constant curvature $\frac{1}{6}$ and,  
by \cite[Theorem 4.6]{Bryant1}, have null-torsion.  Thus, this 2-sphere is congruent up to $\GGG_2$ transformations to 
the Bor\r{u}vka sphere $\mathcal{S}^2(\frac{1}{6})$ in $\mathcal{S}^6$, which is the orbit of $\epsilon_1$, or equivalently 
$\frac{\sqrt{10}}{10}\,x(2x^2-3y^2-3z^2)$, under the 
$\SO(3)$ action described in Example \ref{s4ex1}.

As noted in \cite[p.~123]{Ejiri2}, and as one could quickly verify using the structure equations, 
 $L_2$ is a tube of radius $\sin^{-1}(\frac{2}{3})$ about $\mathcal{S}^2(\frac{1}{6})$ in the second 
normal bundle.  We shall see that constructing a quasi-ruled Lagrangian tube about a non-totally geodesic pseudoholomorphic curve
in the second normal bundle is possible if and only if the curve has null-torsion and the radius is $\sin^{-1}(\frac{2}{3})$.

\subsection[S3]{{\boldmath $\SSS_3$}}

Suppose $C_L\neq 0$ has a pointwise $\SSS_3$-stabilizer. Then there is an open dense subset $L^*$ of $L$ and a function 
$a:L^*\rightarrow\R^+$ such that
$C_L=a\omega_2(\omega_2^2-3\omega_3^2)$ defines an $\SSS_3$-subbundle $\mathcal{F}$
 of the adapted frame bundle over $L^*$.  

By Proposition \ref{Kimage}(ii), the symmetric matrix $K$ associated with the Riemann curvature tensor of $L$, as defined in Definition \ref{Kdfn}, has a repeated eigenvalue of $0$.  Thus, by the remark following Definition \ref{Kdfn}, 
$L$ is \emph{quasi-Einstein}; that is, its Ricci tensor has repeated eigenvalues.  
Quasi-Einstein Lagrangians in $\mathcal{S}^6$ are classified in \cite{Deszcz} -- more on this later.  
We show that the Lagrangians whose fundamental cubic has a pointwise
$\SSS_3$-stabilizer at every point are in correspondence with the non-simple Lagrangians 
satisfying \emph{Chen's equality}.

\begin{dfn}\label{Chen}
Chen \cite{Chen} introduced a new Riemannian invariant $\delta_M$ to study submanifolds $M^n$ of spaces of constant curvature. 
 Explicitly, if $s$ is the sectional curvature
of $M$, $p\in M$, $\{\bfv_1,\ldots,\bfv_n\}$ is an orthonormal basis for $T_pM$ and $\mathcal{P}$ is the set of 2-planes in $T_pM$,
$$\delta_M(p)=\sum_{i<j}s(\bfv_i\w\bfv_j)-\inf_{\Pi\in \mathcal{P}}\,s(\Pi).$$
When $M^n$ is a minimal submanifold of a manifold with constant curvature $c$, it 
follows from \cite[Lemma 3.2]{Chen} that $\delta_M\leq \frac{1}{2}(n+1)(n-2)c$.  Thus, \emph{Chen's equality}, which is for minimal
 3-dimensional submanfolds of $\mathcal{S}^6$, is $\delta_M=2$.  
\end{dfn}

\begin{lemma}\label{Chenlemma}
A non-simple Lagrangian in $\mathcal{S}^6$ has fundamental cubic with $\SSS_3$-stabilizer at each point if and only if 
it satisfies Chen's equality.
\end{lemma}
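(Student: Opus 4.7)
My plan is to reduce both conditions to the same constraint on the spectrum of the symmetric matrix $K$ associated with the curvature tensor of $L$ in Definition \ref{Kdfn}. From the identity $\Tr K=-\tfrac{1}{2}\|h\|^2$ (observed in the proof of Proposition \ref{diagprop}), the scalar curvature satisfies $s=2(R_{1212}+R_{1313}+R_{2323})=2\Tr K+6=6-\|h\|^2$. In three dimensions the sectional curvature of any 2-plane $\Pi$ with unit normal $\bfn$ is $\tfrac{s}{2}-\text{Ric}(\bfn,\bfn)$, so the infimum of $s(\Pi)$ over all 2-planes equals $\tfrac{s}{2}-\rho_{\max}$, where $\rho_{\max}$ is the largest eigenvalue of the Ricci tensor. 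Substituting into Definition \ref{Chen} yields the clean formula $\delta_L=\rho_{\max}$.

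Using the correspondence after Definition \ref{Kdfn} between Ricci and $K$ eigenvalues, if $K$ has eigenvalues $\lambda_1\geq\lambda_2\geq\lambda_3$ then $\rho_{\max}=\lambda_1+\lambda_2+2$, so Chen's equality $\delta_L=2$ is equivalent to $\lambda_1+\lambda_2=0$. I then combine this with the constraints $\Tr K\leq 0$ and $\sigma(K)\geq 0$ of Proposition \ref{Kimage}. Since $\lambda_1+\lambda_2=0$ with $\lambda_1\geq\lambda_2$ forces $\lambda_1\lambda_2\leq 0$, and since $\sigma(K)=\tfrac{1}{3}(\lambda_1\lambda_2+\lambda_3(\lambda_1+\lambda_2))=\tfrac{1}{3}\lambda_1\lambda_2$, the inequality $\sigma(K)\geq 0$ forces $\lambda_1\lambda_2=0$ and hence $\lambda_1=\lambda_2=0$. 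Non-simplicity of $L$ means $h\neq 0$, so $\Tr K<0$, giving $\lambda_3<0$ and $K\neq 0$.

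Proposition \ref{Kimage}(ii) now identifies this spectral profile ($\lambda_1=\lambda_2=0>\lambda_3$ with $K\neq 0$) exactly with the condition that $C_L$ has an $\SSS_3$-stabilizer at each point, completing one direction. The converse is immediate from the same proposition: an $\SSS_3$-stabilizer forces $\lambda_1=\lambda_2=0$, so $\rho_{\max}=2$ and Chen's equality holds. I expect the only real subtlety to be the standard 3-dimensional fact that the extrema of sectional curvature occur on 2-planes orthogonal to Ricci eigenvectors, which follows transparently from $K(\Pi)=\tfrac{s}{2}-\text{Ric}(\bfn,\bfn)$; everything else is algebra on the $K$-eigenvalues already catalogued in Corollary \ref{htable}.
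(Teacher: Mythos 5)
Your proof is correct, and it takes a genuinely different route from the paper's. The paper disposes of the lemma by citing \cite[Lemma 3.1]{Dillen2}: for a non-simple Lagrangian, Chen's equality is equivalent to the existence of a non-zero tangent vector $\bft$ with $h_L(\bft,\cdot)=0$; taking $\bfe_1=\bft$ puts $C_L$ in the form $a\omega_2(\omega_2^2-3\omega_3^2)+b\omega_3(3\omega_2^2-\omega_3^2)$, and an $\SO(2)$ rotation about $\bfe_1$ then gives the $\SSS_3$ normal form of Table \ref{symtable}. You instead stay entirely inside the curvature analysis of $\S$\ref{s5}: the 3-dimensional identity ``sectional curvature of $\Pi$ equals $\frac{1}{2}\,\mathrm{scal}-\mathrm{Ric}(\bfn,\bfn)$'' (valid for \emph{every} 2-plane because the curvature tensor is determined by Ricci in dimension 3) gives $\delta_L=\rho_{\max}$; since $\mathrm{Ric}=(\Tr K+2)\Id_3-K$ by the remark after Definition \ref{Kdfn}, Chen's equality becomes $\lambda_1+\lambda_2=0$, the Gauss-map constraint $\sigma(K)\geq0$ (or, even faster, $\lambda_1^2\leq\sigma(K)=\frac{1}{3}\lambda_1\lambda_2$) of Proposition \ref{Kimage} forces $\lambda_1=\lambda_2=0$, and Proposition \ref{Kimage}(ii) converts this spectral profile into the $\SSS_3$-stabilizer; the converse is the same computation read backwards. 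What your route buys is independence from the external lemma of \cite{Dillen2} and a transparent link between Chen's equality, the quasi-Einstein condition and the $\SSS_3$-symmetry (Chen's invariant is exactly the largest Ricci eigenvalue); what the paper's route buys is brevity and the explicit null direction $\bfe_1$ and cubic normal form used later in $\S$\ref{s6}. One shared caveat, equally present in the paper's own proof: at a point where $h_L=0$ one has $K=0$, the stabilizer there is $\SO(3)$ rather than $\SSS_3$, and Chen's equality is automatic, so both arguments really yield the $\SSS_3$-stabilizer only where the cubic is non-zero; in particular your phrase ``non-simplicity of $L$ means $h\neq0$'' should be read as $h\not\equiv0$, not as pointwise non-vanishing.
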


\begin{proof}
By \cite[Lemma 3.1]{Dillen2}, $L$ is non-simple and satisfies Chen's equality if and only if there exists a non-zero 
tangent vector $\bft$ on $L$ such that $h_L(\bft,\bfv)=0$ for all tangent vectors $\bfv$ on $L$.  Letting $\bft=\bfe_1$, 
we see that the fundamental cubic of $L$ satisfying Chen's equality must be of the form 
$$a\omega_2(\omega_2^2-3\omega_3^2)+b\omega_3(3\omega_2^2-\omega_3^2).$$
Since we need only fix $\bfe_1$ in our frame, we can set $b=0$ using $\SO(2)$. 
\end{proof}

%\medskip

We shall see that the Lagrangians whose fundamental cubic has pointwise $\SSS_3$-symmetry 
include those associated with lower-dimensional geometries.  This leads us to prove the following result.

\begin{prop}\label{lower}
Identify $\Im\O\cong\R\oplus\C^3$ such that if $(x_1,\ldots,x_7)$ are coordinates on $\Im\O$, 
the coordinates on $\R$ and $\C^3$ are $x_1$ and $(x_2+ix_3, x_4+ix_5, x_6+ix_7)$ respectively.  
Recall that $\mathcal{S}^5\subseteq\C^3$ is endowed with a contact structure.  
\begin{itemize}
\item[\emph{(i)}] $L=\{0\}\times P\subseteq\{0\}\times\mathcal{S}^5\subseteq\mathcal{S}^6$ is Lagrangian 
if and only if $P$ is the link in $\mathcal{S}^5$ of a complex 2-dimensional cone in $\C^3$.
\item[\emph{(ii)}] $L=\{(\cos t,p\sin t)\in\R\oplus\C^3:
p\in \Sigma\subseteq\mathcal{S}^5,\,t\in(0,\pi)\}\subseteq\mathcal{S}^6$ is Lagrangian if and only if 
$\Sigma$ is a minimal Legendrian surface in $\mathcal{S}^5$.
\end{itemize}
\end{prop}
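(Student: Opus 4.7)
The plan is to recast both parts as statements about the cone $C(L) \subseteq \Im\O$ via Proposition \ref{coasscones}: $L$ is Lagrangian in $\mathcal{S}^6$ if and only if $C(L)$ is coassociative, i.e.\ $\varphi_0|_{C(L)}\equiv 0$. The key preparatory step is to decompose $\varphi_0$ with respect to the given splitting $\Im\O = \R \oplus \C^3$. Taking $\R$ spanned by $\eps_1$ and complex coordinates $z_j = x_{2j} + ix_{2j+1}$, one checks from the multiplication table of $\S$\ref{s3} that left multiplication by $\eps_1$ induces the standard complex structure on $\C^3$, and then a direct computation from \eq{phitimes} gives
\begin{equation*}
\varphi_0 = dx^1 \wedge \omega_{\C^3} + \Re\Omega_{\C^3},
\end{equation*}
where $\omega_{\C^3}$ is the Kähler form and $\Omega_{\C^3} = dz_1 \wedge dz_2 \wedge dz_3$ the holomorphic volume form on $\C^3$.

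For (i), the cone $C(L) = \{0\}\times C(P)$ lies entirely in $\{x_1 = 0\}$, so every tangent vector annihilates $dx^1$ and
$\varphi_0|_{C(L)} = \Re\Omega_{\C^3}|_{C(P)}.$
Hence $L$ is Lagrangian if and only if $\Re\Omega_{\C^3}$ vanishes on the real $4$-dimensional cone $C(P)$ in $\C^3$. The pointwise claim I would establish is that a real $4$-plane $V \subseteq \C^3$ satisfies $\Re\Omega_{\C^3}|_V = 0$ if and only if $V$ is a complex $2$-plane. One direction is immediate: a complex $2$-plane admits no nonzero $(3,0)$-form, so $\Omega_{\C^3}|_V = 0$. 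For the converse, the dimension bound $\dim_\R(V + JV) \leq 6$ forces $\dim_\C(V \cap JV)\geq 1$; if $V$ is non-complex, $V\cap JV$ is a complex line, and $V$ decomposes as the sum of this line with a totally real $2$-plane. Since $SU(3)\subseteq\GG_2$ acts transitively on such configurations and preserves $\Re\Omega_{\C^3}$, one reduces to the single normal form $V = \C e_1 \oplus \R e_2 \oplus \R e_3$, and an explicit calculation produces a nonzero component of $\Re\Omega_{\C^3}|_V$.

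For (ii), observe that $C(L) = \R \times C(\Sigma)$, where $C(\Sigma)\subseteq \C^3$ is the open cone on $\Sigma$, and at each point the tangent space splits as $\R\oplus TC(\Sigma)$. Evaluating $\varphi_0$ on three tangent vectors: if none is the $\R$-radial direction, only the $\Re\Omega_{\C^3}$ piece contributes and yields $\Re\Omega_{\C^3}|_{C(\Sigma)}$; if exactly one is $\R$-radial, only $dx^1\wedge\omega_{\C^3}$ contributes and yields $\omega_{\C^3}|_{C(\Sigma)}$. Thus $\varphi_0|_{C(L)}\equiv 0$ if and only if simultaneously $\omega_{\C^3}|_{C(\Sigma)}\equiv 0$ and $\Re\Omega_{\C^3}|_{C(\Sigma)}\equiv 0$, i.e.\ $C(\Sigma)$ is a special Lagrangian $3$-fold in $\C^3$ (of phase $\frac{\pi}{2}$). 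Since special Lagrangian cones in $\C^3$ correspond bijectively to minimal Legendrian surfaces in $\mathcal{S}^5$, the conclusion follows.

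I expect the main obstacle to be the pointwise lemma in (i): showing that vanishing of the real part of a single $(3,0)$-form on a $4$-plane actually characterises complex $2$-planes. The $SU(3)$-orbit reduction is essential, since without it the $4$ linear equations imposed by $\Re\Omega_{\C^3}|_V = 0$ on the $8$-dimensional Grassmannian $\mathrm{Gr}(4,6)$ only a priori cut out a $4$-dimensional locus, matching $\C\P^2$ in dimension but not obviously equal to it. Once the normal form is in hand the verification is routine, and the corresponding step for (ii) is comparatively easy because the splitting of tangent vectors into $\R$-radial and cone-radial components separates the two pieces of $\varphi_0$ cleanly.
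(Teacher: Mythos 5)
Your overall route is the same as the paper's: write $\varphi_0=\d x_1\w\omega_0+\Re\Omega_0$ on $\R\oplus\C^3$, pass to the cone via Proposition \ref{coasscones}, identify case (i) with complex surfaces and case (ii) with special Lagrangian cones whose links are minimal Legendrian (the sign of the phase, $i$ versus $-i$, is just an orientation convention). Part (ii) of your argument is correct and is exactly what the paper does. The difference is that the paper simply quotes the Harvey--Lawson-type fact that a coassociative $4$-fold contained in $\{0\}\times\C^3$ is a complex surface, whereas you attempt to prove the underlying pointwise lemma — and that is where there is a genuine gap.

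The flawed step is the claim that $\SU(3)$ acts transitively on configurations $(\text{complex line})\oplus(\text{totally real }2\text{-plane})$, reducing to the single normal form $V=\C e_1\oplus\R e_2\oplus\R e_3$. This is false: writing a non-complex $4$-plane canonically as $V=\ell\oplus W$ with $\ell=V\cap JV$ and $W=\ell^{\perp}\cap V\subseteq\ell^{\perp_{\C}}\cong\C^2$, the stabilizer of $\ell$ in $\SU(3)$ acts on $W$ through $\U(2)$, and the K\"ahler angle of $W$ is a $\U(2)$-invariant. So there is a one-parameter family of orbits, and checking only the Lagrangian-angle representative misses planes such as $V=\C e_1\oplus\langle e_2,\tfrac{1}{\sqrt 2}(ie_2+e_3)\rangle_{\R}$. (If instead you meant ``totally real'' in the strong sense $JW\perp W$, then it is the decomposition claim that fails, since $W$ need not be $\omega_0$-isotropic.) The lemma itself is true and the fix is easy and needs no normal form: with $V=\C e_1\oplus W$ and $w_1,w_2$ a basis of $W$, the only potentially non-zero components of $\Re\Omega_0|_V$ come from the triples $(e_1,w_1,w_2)$ and $(ie_1,w_1,w_2)$, whose vanishing forces both the real and imaginary parts of $(\d z_2\w\d z_3)(w_1,w_2)$ to vanish; this complex determinant is zero only when $w_1,w_2$ are $\C$-linearly dependent, i.e.\ when $W$ is a complex line, contradicting that $V$ is non-complex. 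With this replacement your proof of (i) is complete, and the rest of the proposal stands.
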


\begin{proof}  Use the notation of Definition \ref{asscoass} and use $\omega_0$ and $\Omega_0$ 
to denote the K\"ahler and holomorphic volume forms on $\C^3$. We can
write $\varphi_0$ on $\Im\O\cong\R\oplus\C^3$ as
\begin{equation}\label{phi0}
\varphi_0=\d x_1\w\omega_0+\Re\Omega_0.
\end{equation}
By Proposition \ref{coasscones}, $L$ is Lagrangian in $\mathcal{S}^6$ if and only if the cone $N$ on $L$ is coassociative; that is, 
satisfies $\varphi_0|_N\equiv 0$.  By \eq{phi0}, $N=\{0\}\times X$ is coassociative in $\Im\O\cong\R\oplus\C^3$ if and only
 if $X$ is a complex surface.  Part (i) follows.  
Similarly, $N=\R\times Y$ is coassociative if and only if $Y$ is special Lagrangian 
with phase $-i$ in $\C^3$ by Definition \ref{CYSLdfn}.  Since the link of a special Lagrangian cone in $\C^3$ is minimal Legendrian in 
$\mathcal{S}^5$, part (ii) is also proved.
\end{proof}

Lagrangian submanifolds in $\mathcal{S}^6$ satisfying Chen's equality are classified in \cite{Dillen2}.  
We review these results below.

\begin{ex}\textbf{(Links of complex cones)}\label{s4ex3}
Let $\bfu:\Sigma\rightarrow\C \P^2$ be a holomorphic curve.  Let $\mathcal{C}(\Sigma)$ be the circle 
bundle over $\Sigma$ induced by the Hopf fibration $\mathcal{S}^5\rightarrow\C\P^2$.  Let 
$\bfx:\mathcal{C}(\Sigma)\rightarrow \mathcal{S}^5$ be such that the following diagram commutes:
$$
\xymatrix{
\mathcal{C}(\Sigma)\ar[r]^{\bfx}\ar[d] & 
\mathcal{S}^5 \ar[d]
\\
\Sigma\ar[r]^{\bfu}& \C\P^2.}
$$
By \cite[Theorem 1]{Dillen2}, there exists a totally geodesic embedding $i:\mathcal{S}^5\rightarrow\mathcal{S}^6$ such 
that $i\circ\bfx:\mathcal{C}(\Sigma)\rightarrow\mathcal{S}^6$ is a Lagrangian immersion satisfying Chen's equality.  
Let $L_3(\bfu,\Sigma)=i\circ\bfx\big(\mathcal{C}(\Sigma)\big)$.  

By Proposition \ref{lower}, these examples are the links 
of complex cones embedded in a totally geodesic 5-sphere in $\mathcal{S}^6$.  Moreover, 
they are clearly tubes of radius 
$\frac{\pi}{2}$, in the plane bundle defined by the Hopf fibration, 
about the surface which is the embedding of $\Sigma$ in $\mathcal{S}^6$.
\end{ex}

\begin{note}
The Lagrangian $L_3(\bfu,\Sigma)$ where $\Sigma$ is a totally geodesic $\C\P^1$ is simple.
\end{note}

\begin{ex}\textbf{(Ruled tubes in the second normal bundle)}\label{s4ex4}
Let $\bfu:\Sigma\rightarrow\mathcal{S}^6$ be a non-totally geodesic pseudoholomorphic curve, let $h_{\Sigma}$ be its 
second fundamental form and let $\mathcal{U}(\Sigma)$ be its unit tangent bundle.  If $\Sigma$ 
has no totally geodesic points or branch points, we can define a map 
$\bfx:\mathcal{U}(\Sigma)\rightarrow\mathcal{S}^6$ by
$$\bfx:\bft\mapsto \bft\times\frac{h_{\Sigma}(\bft,\bft)}{\|h_{\Sigma}(\bft,\bft)\|}.$$ 
 By \cite[Theorem 2]{Dillen2}, $\bfx$ defines a (possibly branched) 
 Lagrangian immersion satisfying Chen's equality.  
  
From \eq{pholoframe} and Definition \ref{tubes}, we recognise $\bfx$ as defining a tube of radius $\frac{\pi}{2}$ 
in $N_2\Sigma$ about $\Sigma$.  Note that we can extend the definition of $\bfx$ if $\Sigma$ has isolated branch and totally geodesic points, and 
still get a Lagrangian immersion by \cite[Theorem 3]{Dillen2}.  Let $L_4(\bfu,\Sigma)$ be the Lagrangian associated with 
$\bfu:\Sigma\rightarrow\mathcal{S}^6$.
\end{ex}

From \cite[Theorems 4 \& 5]{Dillen2} and Lemma \ref{Chenlemma}, we deduce the following.

\begin{prop}\label{s4prop4}
Let $L$ be a connected, non-simple Lagrangian in $\mathcal{S}^6$ with fundamental cubic $C_L$.
\begin{itemize}\item[\emph{(i)}]
If $L$ is not linearly full in $\mathcal{S}^6$, $C_L$ has a pointwise $\SSS_3$-stabilizer 
and there exists a non-totally geodesic holomorphic curve $\bfu:\Sigma\rightarrow\C\P^2$ such that 
$L=L_3(\bfu,\Sigma)$ as given in Example \ref{s4ex3}.
\item[\emph{(ii)}] Suppose $L$ is linearly full and $C_L$ has a pointwise $\SSS_3$-stabilizer.  
There is an open dense subset $L^*$ of $L$ such that, for all $x\in L^*$, there exist an open set $U\ni x$ and 
a non-totally geodesic pseudoholomorphic curve $\bfu:\Sigma\rightarrow\mathcal{S}^6$ such that $U\cap L^* = 
L_4(\bfu,\Sigma)$ as given by Example \ref{s4ex4}.
\end{itemize}
\end{prop}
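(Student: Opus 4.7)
The plan is to reduce both parts to the classification of Lagrangians in $\mathcal{S}^6$ satisfying Chen's equality as carried out in \cite{Dillen2}, with Lemma \ref{Chenlemma} as the crucial bridge: for a non-simple Lagrangian, the pointwise $\SSS_3$-stabilizer of $C_L$ is equivalent to Chen's equality $\delta_L = 2$.

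For part (i), I would first show that a non-simple Lagrangian $L$ that is not linearly full must lie in a totally geodesic $\mathcal{S}^5$. Dimension counting excludes $n \leq 4$: the case $n = 3$ forces $L$ to be totally geodesic, hence simple, and a 3-dimensional Lagrangian cannot sit in a totally geodesic $\mathcal{S}^4$ since the restriction of $\omega$ to $\mathcal{S}^4$ has rank structure incompatible with admitting a 3-dimensional isotropic submanifold. Proposition \ref{lower}(i) then identifies $L \subseteq \mathcal{S}^5$ with the link of a complex 2-dimensional cone in $\C^3$, and the Hopf fibration $\mathcal{S}^5 \to \C\P^2$ projects this link to a holomorphic curve $\bfu : \Sigma \to \C\P^2$ with $L = L_3(\bfu, \Sigma)$ by construction. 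Invoking \cite[Theorem 1]{Dillen2}, every such $L_3(\bfu, \Sigma)$ satisfies Chen's equality, and the pointwise $\SSS_3$-stabilizer of $C_L$ then follows from Lemma \ref{Chenlemma}.

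For part (ii), Lemma \ref{Chenlemma} translates the $\SSS_3$-stabilizer hypothesis into Chen's equality. The linearly full case of the classification \cite[Theorems 4 \& 5]{Dillen2} then yields, on an open dense $L^* \subseteq L$ and locally near each $x \in L^*$, a non-totally-geodesic pseudoholomorphic curve $\bfu : \Sigma \to \mathcal{S}^6$ together with the explicit parametrization $\bft \mapsto \bft \times h_{\Sigma}(\bft, \bft)/\|h_{\Sigma}(\bft, \bft)\|$ for the image. By the unitary framing \eq{pholoframe}, this is exactly a tube of radius $\pi/2$ in $N_2\Sigma$, so $U \cap L^* = L_4(\bfu, \Sigma)$ of Example \ref{s4ex4}. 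Passing to $L^*$ accommodates the isolated totally geodesic points and branch points of $\Sigma$, which are handled separately by \cite[Theorem 3]{Dillen2}.

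The main obstacle is the dimension argument in part (i): showing rigorously that a non-simple Lagrangian which fails to be linearly full must fill an entire totally geodesic $\mathcal{S}^5$ requires care, since the nearly K\"ahler structure restricted to a lower-dimensional totally geodesic sphere is not itself nearly K\"ahler, so one must argue via $\omega$-isotropy and the $\GGG_2$-geometry of the cone over $L$ in $\Im\O$ rather than by invoking a standard compatibility. All of the substantive analytical classification content is already in \cite{Dillen2}; beyond the dimension reduction, the remaining task is bookkeeping to align the parametrizations there with Examples \ref{s4ex3} and \ref{s4ex4}.
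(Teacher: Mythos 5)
Your proposal is correct, and for part (ii) it is exactly the paper's argument: the paper's entire proof is the citation of \cite[Theorems 4 \& 5]{Dillen2} together with Lemma \ref{Chenlemma}, and your use of \eq{pholoframe} to recognise the Dillen--Vrancken parametrisation as the tube $L_4(\bfu,\Sigma)$ of Example \ref{s4ex4} is precisely how the paper reads that classification. For part (i) you take a somewhat different, more self-contained route: rather than quoting the non-linearly-full case of the classification in \cite{Dillen2} directly, you deduce it from Proposition \ref{lower}(i) (the cone on $L$ is a coassociative cone contained in $\{0\}\times\C^3$, hence a complex cone, so $L$ is a Hopf lift and equals $L_3(\bfu,\Sigma)$ with $\Sigma$ non-totally geodesic since $L$ is non-simple), and then obtain the $\SSS_3$-stabilizer from \cite[Theorem 1]{Dillen2} via Lemma \ref{Chenlemma}. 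This keeps the argument inside the paper's own $\GGG_2$/coassociative-cone framework, at the cost of re-deriving part of what \cite{Dillen2} already supplies.

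One correction: the step you single out as the ``main obstacle'' is in fact immediate, and your treatment of it is the one shaky point. By definition, not linearly full means $L$ lies in some totally geodesic $\mathcal{S}^n$ with $n<6$; since any linear $(n+1)$-plane in $\Im\O$ is contained in a $6$-plane, $L$ lies in a totally geodesic $\mathcal{S}^5$, and $\GGG_2$-transitivity on unit vectors reduces to the standard $\mathcal{S}^5$ of Proposition \ref{lower}. No separate exclusion of $n=3,4$ is required, and your claim that no $3$-dimensional Lagrangian can sit in a totally geodesic $\mathcal{S}^4$ is false as stated: the simple Lagrangian does. What is true is that any Lagrangian contained in a totally geodesic $\mathcal{S}^4$ is forced to be totally geodesic, because a $5$-plane in $\Im\O$ contains a unique coassociative $4$-plane (the orthogonal complement of the unique associative $3$-plane containing the orthogonal $2$-plane), so a coassociative cone in a $5$-plane is linear. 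Since that step is dispensable, the proof as a whole stands.
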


Notice that the Lagrangians in Examples \ref{s4ex3} and \ref{s4ex4} are both ruled as they are tubes of radius 
$\frac{\pi}{2}$. %about pseudoholomorphic curves.  
Therefore, the Lagrangians in $\mathcal{S}^6$ whose fundamental cubic has 
an $\SSS_3$-symmetry at each point are ruled.

\medskip

For comparison later, as well as for interest, we derive the structure equations for Lagrangians with pointwise 
$\SSS_3$-symmetry of their fundamental cubic.

Recall that we are working on an $\SSS_3$-bundle $\mathcal{F}$ over the open dense subset $L^*$ of $L$, so there exist functions $t_{ij}$ 
such that $\alpha_i=t_{ij}\omega_j$. 
Using \eq{secondLa}, \eq{secondLb} and \eq{secondLd},
we have that $t_{22}=t_{33}=-2-3t_{11}$, $t_{23}=-t_{32}$ and $t_{21}=t_{31}=0$.  If we let $t_1=t_{23}$, $t_2=-t_{13}$, $t_3=t_{12}$ and $t_0=t_{11}$, then:
\begin{subequations}\label{s3eqs1}
\begin{align}
\alpha_1&=t_0\omega_1+t_3\omega_2-t_2\omega_3;\\ 
\alpha_2&=-(2+3t_0)\omega_2+t_1\omega_3;\\
\alpha_3&=-t_1\omega_2-(2+3t_0)\omega_3;\\ 
\d a&=-a(t_1\omega_1+3t_2\omega_2+3t_3\omega_3).
\end{align}
\end{subequations}
It follows that:
\begin{subequations}\label{s3eqs2}
\begin{align}
\d\omega_1&=-2(1+3t_0)\omega_2\wedge\omega_3;\\
\d\omega_2&=-t_3\omega_2\wedge\omega_3-2t_0\omega_3\wedge\omega_1+t_1\omega_1\wedge\omega_2;\\
\d\omega_3&=t_2\omega_2\wedge\omega_3-t_1\omega_3\wedge\omega_1-2t_0\omega_1\wedge\omega_2.
\end{align}
\end{subequations}
Moreover, 
\eq{secondLc} and $\d(\d a)=0$ imply that there exist %functions 
$u_1$, $u_2$, $v_1$, $v_{2}$ such that:
\begin{subequations}\label{s3eqs3}
\begin{align}
\d t_0&=-\textstyle\frac{2}{3}(1+3t_0)t_1\omega_1+u_1\omega_2+u_2\omega_3;\\
\d t_1&=(9t_0^2+6t_0-3-t_1^2)\omega_1-3u_2\omega_2+3u_1\omega_3;\\
\d t_2&=(2t_0t_3-t_1t_2-u_2)\omega_1+\big(\textstyle\frac{1}{2}(\textstyle\frac{1}{8}a^2-t_1^2-t_2^2-t_3^2-7-14t_0-15t_0^2)
+v_1\big)\omega_2\nonumber\\
&\qquad+\big(v_2-\textstyle\frac{1}{3}(1+3t_0)t_1\big)\omega_3;\\
\d t_3&=(u_1-2t_0t_2-t_3t_1)\omega_1+\big(v_2+\textstyle\frac{1}{3}(1+3t_0)t_1\big)\omega_2\nonumber\\
&\qquad+\big(\textstyle\frac{1}{2}(\textstyle\frac{1}{8}a^2-t_1^2-t_2^2-t_3^2-7-14t_0-15t_0^2)-v_1\big)\omega_3.
\end{align}
\end{subequations}

The appropriate EDS associated with these equations is involutive with last non-zero 
character $s_1=4$.  %Thus, these Lagrangians locally depend on 4 functions of 1 variable, which 
 This is in agreement with 
Proposition \ref{s4prop4}, since this is the same local dependence as a pseudoholomorphic curve  in $\mathcal{S}^6$.
 
\medskip

To consider some important reductions of this system and for comparison later, we study these structure equations 
more thoroughly.  Recall the observations in $\S$\ref{pholo} and define:
\begin{subequations}\label{s3eqs4}
\begin{gather}
\bfu=-J\bfe_1;\label{s3eqs4a}\\
\bff_1=\frac{1}{2}(-J\bfe_2+i J\bfe_3);\qquad \bff_2=\frac{1}{2}(\bfe_2+i\bfe_3);\qquad \bff_3=\frac{1}{2}(\bfx+i\bfe_1);\\
\theta_1=\frac{1}{2}\big(3(1+t_0)+it_1\big)(\omega_2+i\omega_3);\\
%\kappa_{11}=i\big((1-t_0)\omega_1-t_3\omega_2+t_2\omega_3\big);\\
\kappa_{22}=i\big((1+t_0)\omega_1+t_3\omega_2-t_2\omega_3\big);
\qquad\kappa_{33}=-2i\omega_1;\\
\kappa_{21}=k_2\theta_1=\frac{a}{4}(\omega_2+i\omega_3);\qquad \kappa_{31}=k_3\theta_1=0;\\
\kappa_{32}=k_1\theta_1=\frac{1}{2}\big((3t_0-1)+it_1\big)(\omega_2+i\omega_3).
\end{gather}
\end{subequations}
These functions and forms satisfy the structure equations \eq{pholoeq} for the adapted frame bundle of 
 a pseudoholomorphic curve in $\mathcal{S}^6$.
% , for which we need some of the first structure equations:
%\begin{subequations}\label{s3eqs4}
%\begin{align}  
%\d J\bfe_1=&-J\bfe_2\big(t_1\omega_2+3(1+t_0)\omega_3\big)+J\bfe_3\big(3(1+t_0)\omega_2-t_1\omega_3);\\
%\d J\bfe_2=&-\frac{1}{4}\,a\bfe_2\omega_2+\frac{1}{4}\,a\bfe_3\omega_3\nonumber\\
%&+J\bfe_1\big(t_1\omega_2+3(1+t_0)\omega_3\big)
%+J\bfe_3\big((t_0-1)\omega_1+t_3\omega_2-t_2\omega_3\big);\\
%\d J\bfe_3=&\frac{1}{4}\,a\bfe_2\omega_3+\frac{1}{4}\,a\bfe_3\omega_2\nonumber\\
%&-J\bfe_1\big(3(1+t_0)\omega_2-t_1\omega_3\big)
%-J\bfe_2\big((t_0-1)\omega_1+t_3\omega_2-t_2\omega_3\big).
%\end{align}
%\end{subequations}
Notice from \eq{pholoeq} and \eq{s3eqs4} that $J\bfe_1$ is constant if and only if $\theta_1=0$, 
which is if and only if $1+t_0=t_1=0$.  
Thus, the Lagrangian lies in a totally geodesic $\mathcal{S}^5$ if and only if $(t_0,t_1)=(-1,0)$.  

If $(t_0,t_1)\neq(-1,0)$, $\theta_1$ is nowhere vanishing on some open dense set, and so $\bfu$ in \eq{s3eqs4a} 
defines a pseudoholomorphic curve $\Sigma$ in $\mathcal{S}^6$.  Moreover, as $\kappa_{31}=0$ on $\Sigma$, the unitary 
frame $\{\bff_1,\bff_2,\bff_3\}$ over $\Sigma$ is adapted so that $\bff_2$ spans $N_1\Sigma$ and $\bff_3$ spans $N_2\Sigma$.  
Since $\bfx=\Re\bff_3$ and $\bfe_1=\Im\bff_3$, the Lagrangian defined by $\bfx$ is a tube of radius $\frac{\pi}{2}$ 
in $N_2\Sigma$ about $\Sigma$ as claimed in Proposition \ref{s4prop4}(ii).
%\begin{ex}\textbf{(Products 1)}
%Setting $t_0=-\frac{1}{3}$, we see immediately that $\d\omega_1=0$ and that the reduced EDS is still involutive but now with $s_1=2$.
%Therefore the corresponding Lagrangians are locally products which are of the form $L_4(\bfu,\Sigma)$ 
%as in Example \ref{s4ex4}.  Notice that $t_1\neq 0$, so $\Sigma$ must have torsion.  Moreover the Lagrangian is locally 
%given by $\mathcal{S}^1\times S$ for certain \emph{minimal} surfaces in $\mathcal{S}^6$ satisfying $\omega|_S\equiv 0$.
%Thus, Lagrangians in $\mathcal{S}^6$ which are locally products of this type 
%have fundamental cubic with pointwise $\SSS_3$-stabilizer.
%\end{ex}
\medskip

Suppose that $t_1=0$.  Then either $t_0=-1$ or $t_0=\frac{1}{3}$ and in each case the reduced EDS is involutive with $s_1=2$. 
As noted above, the system for $(t_0,t_1)=(-1,0)$ describes the Lagrangians given in Example \ref{s4ex3}.
For $(t_0,t_1)=(\frac{1}{3},0)$, the Lagrangians must necessarily be locally of the form in Example \ref{s4ex4}
by Proposition \ref{s4prop4}.  From \eq{s3eqs4}, $(t_0,t_1)=(\frac{1}{3},0)$ if and only if the torsion 
$k_1=0$, so the Lagrangians are tubes about \emph{null-torsion} pseudoholomorphic curves.

Suppose we consider the reduced system where $a$ is constant.  This forces $t_i=0$ for $i=1,2,3$ 
and either $(a,t_0)=(8,-1)$ or  $(a,t_0)=(\frac{8}{3}\sqrt{15},\frac{1}{3})$.

\begin{ex}\textbf{({\boldmath $\SO(3)$}-orbits 2)}\label{aconstantex1} 
%\begin{itemize}
%\item[]
%\item[(a)] 
If $(a,t_0)=(8,-1)$, our comments above show that the Lagrangian lies in a totally geodesic $\mathcal{S}^5$, and: %, $\alpha_i=\omega_i$ for $i=2,3$ and $\alpha_1=-\omega_1$, so:
\begin{align*}
\d\omega_1=4\omega_2\w\omega_3;\qquad\d\omega_2&=2\omega_3\w\omega_1;\qquad\d\omega_3=2\omega_1\w\omega_2.
\end{align*}
%By our comments above, this example lies in a totally geodesic $\mathcal{S}^5$ in $\mathcal{S}^6$. 
We observe 
that the equations for $\omega_2$ and $\omega_3$ define a constant curvature 2-sphere which cannot 
be totally geodesic, otherwise the Lagrangian would be simple.  Thus, the underlying holomorphic curve   
is the degree 2 $\C\P^1$ in $\C\P^2$. %; that is, the lift of the Veronese curve in $\C\P^2$. 
Further, it is a homogeneous
submanifold of $\mathcal{S}^6$, so we deduce from \cite[Theorems 4.2 \& 4.4]{Mashimo} that it is invariant under 
an $\SO(3)$ action on $\Im\O\cong\R\oplus\C^3$, where $\SO(3)$ acts trivially on $\R$ and as the standard 
(real) $\SO(3)$ action on $\C^3$.  Hence, up to rigid motion, the Lagrangian is  
$$\{(0,z_1,z_2,z_3)\in\R\oplus\C^3\,:\, z_1^2+z_2^2+z_3^2=0\}\cap\mathcal{S}^6,$$
which is the Hopf lift of the Veronese curve 
$\bfu:\C\P^1(2)\rightarrow\C\P^2$.
%This Lagrangian is diffeomorphic to $\R P^3$, and we recognise it as a circle bundle over a totally geodesic $\mathcal{S}^2$ via the Hopf fibration.  %Moreover, this $\mathcal{S}^2$ must be totally geodesic as it is not linearly full
%\item[(b)] 
\end{ex}

\begin{ex}\textbf{({\boldmath $\SO(3)$}-orbits 3)}\label{aconstantex2} 
If $(a,t_0)=(\frac{8}{3}\sqrt{15},\frac{1}{3})$: 
%, $\alpha_i=-3\omega_i$ for $i=2,3$ and $\alpha_1=\frac{1}{3}\omega_1$.  Hence:
\begin{align*}
\d\omega_1=-4\omega_2\w\omega_3;\qquad
\d\omega_2=-\frac{2}{3}\,\omega_3\w\omega_1;\qquad
\d\omega_3=-\frac{2}{3}\,\omega_1\w\omega_2.
\end{align*}
Again this is a homogeneous submanifold of $\mathcal{S}^6$ and so, by process of elimination, we can deduce 
from \cite[Theorems 4.3 \& 4.4]{Mashimo} that it is (up to $\GGG_2$ transformation) the orbit through $\eps_6\in\Im\O$ 
of the $\SO(3)$ action given in Example \ref{irractiondfn}.  Equivalently, it is the $\SO(3)$-orbit of the 
cubic $\frac{1}{2}y(y^2-3z^2)$ in $\mathcal{H}^3(\R^3)$, which is clearly 
diffeomorphic to $\SO(3)/\SSS_3$ by Table \ref{symtable}.  Moreover, the structure equations for $\omega_2$ and $
\omega_3$ define 
a constant curvature 2-sphere and, since the Lagrangian is linearly full, it must once again 
be the Bor\r{u}vka sphere.  Hence this example is a tube of radius $\frac{\pi}{2}$ in the second normal bundle about 
$\mathcal{S}^2(\frac{1}{6})$. 
\end{ex}

\noindent The author, in 
\cite{Lotay}, gave a method for producing examples of coassociative 4-folds in $\Im\O$ with symmetries.  We can 
apply this method to the $\SO(3)$-action given in Example \ref{irractiondfn}, and conical solutions will give rise 
to the Lagrangian $\SO(3)$-orbits by Proposition \ref{coasscones}.  Altogether, we can derive a system of first-order
 ordinary differential equations %of the form 
%$$\frac{d\bfx}{dt}={\mathbf F}(\bfx(t)),$$
%where $\bfx=(x_1,\ldots,x_7):\R\rightarrow\mathcal{S}^6$ and $\mathbf{F}:\mathcal{S}^6\rightarrow\mathcal{S}^6$ is a homogeneous cubic function of the
% coordinates $x_i$, 
 whose solutions define the homogeneous Lagrangians we are interested in.  Therefore,
in principle, this system can simply be integrated to produce an explicit description of the Lagrangian given in Example 
\ref{aconstantex2}, though the author has been unable to do this.

\begin{note} Examples \ref{simple}, \ref{s4ex2}, \ref{s4ex1}, \ref{aconstantex1} and \ref{aconstantex2}
classify the homogeneous Lagrangian submanifolds of $\mathcal{S}^6$ up to $\GGG_2$ transformation, as studied in \cite{Mashimo}.
\end{note}

\begin{ex}\textbf{(Products 1)}\label{prodsex1}
Setting $t_0=-\frac{1}{3}$, we see immediately that $\d\omega_1=0$ and that the reduced EDS is still involutive but now with $s_1=2$.
Since $t_0\neq -1$, the corresponding Lagrangians must locally be of the form $L_4(\bfu,\Sigma)$ 
as in Example \ref{s4ex4} by Proposition \ref{s4prop4}.  Moreover, these Lagrangians are locally \emph{products} 
$\mathcal{S}^1\times P$ for some surface $P$, which is equivalent to saying that $N_2\Sigma$ is 
trivial.  
 Furthermore, by \eq{s3eqs4}, we see that $t_0=-\frac{1}{3}$ if and only if the torsion $k_1$ of $\Sigma$ satisfies 
$|k_1|=1$.  As observed in $\S$\ref{pholo}, this occurs if and only if 
$\Sigma$ lies linearly full in a totally geodesic $\mathcal{S}^5$ and so is a minimal Legendrian surface.
%This is also observed in \cite[Remark 5.3]{Dillen2}, %$\Sigma$ has trivial second normal bundle if and only if 
%it lies linearly full in some totally geodesic $\mathcal{S}^5$ in $\mathcal{S}^6$.  
%This 
% and is in keeping with our EDS calculation since such curves depend locally on 2 functions of 1 variable.
%Notice that $t_1\neq 0$, so $\Sigma$ must have torsion.  Moreover the Lagrangian is locally 
%given by $\mathcal{S}^1\times S$ for certain \emph{minimal} surfaces in $\mathcal{S}^6$ satisfying $\omega|_S\equiv 0$.
\end{ex}

At this point we make an aside concerning \emph{austere} Lagrangians in $\mathcal{S}^6$.

\begin{dfn}\label{austere}
Let $L$ be a 3-dimensional submanifold of $\mathcal{S}^6$ and let $h_L$ be its second fundamental form. For each $p\in L$,
let $\{\bfe_1(p),\bfe_2(p),\bfe_3(p)\}$ and $\{\bfe_1^{\perp}(p),\bfe_2^{\perp}(p),\bfe_3^{\perp}(p)\}$ be orthonormal
 bases for $T_pL$ and $N_pL$ respectively and let $2\omega_i$ be the dual 1-form to $\bfe_i$. We can locally write 
$h_L$ using summation notation as:
$$h_L=4h_{ijk}\bfe^{\perp}_i\otimes\omega_j\omega_k$$
for some tensor of functions $h_{ijk}$ satisfying $h_{ijk}=h_{ikj}$.  Let $q_i=4h_{ijk}\omega_j\omega_k$ in 
summation notation.  We say that $L$ is \emph{austere} if, for all $i$, the set of eigenvalues of $q_i$ is of the form 
$\{0,\pm\lambda_i\}$ for some $\lambda_i$.  
\end{dfn}

\noindent Austere submanifolds were introduced in \cite{HarLaw} in the discussion 
of special Lagrangian submanifolds.  Notice that austere 3-folds in $\mathcal{S}^6$ are minimal.  A 
complete classification of austere 3-folds in $\mathcal{S}^6$ is not known, but steps in this direction are 
taken in \cite{Dajczer} and \cite{Ikawa}.  However, we are able to show the following.

\begin{prop}
A Lagrangian in $\mathcal{S}^6$ is austere if and only if its fundamental cubic is either zero or has 
$\AAA_4$ or $\SSS_3$-stabilizer at each point.  Thus, connected austere Lagrangians in $\mathcal{S}^6$ are either simple or given by 
Proposition \ref{s4prop3} or Proposition \ref{s4prop4}.
\end{prop}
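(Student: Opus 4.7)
The plan is, pointwise, to classify the harmonic cubics $h_{ijk}$ that admit a frame in which $\det q_i = 0$ for $i=1,2,3$ (this being equivalent to austerity since Lagrangians in $\mathcal{S}^6$ are minimal, so each $q_i$ is automatically traceless). One then matches the resulting cubics with the three stabilizer classes in Table~\ref{symtable}, and uses the already-established classifications (Propositions~\ref{s4prop1}, \ref{s4prop3}, \ref{s4prop4}) to identify the underlying Lagrangians.

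For the sufficient direction I would place $C_L$ into the canonical form of Table~\ref{symtable} for each allowed stabilizer and compute the $q_i$ directly. The zero cubic is trivial. For the $\AAA_4$-form $C_L = 3s\omega_1(\omega_2^2-\omega_3^2)$ one finds $q_1 = \diag(0,s/2,-s/2)$, while $q_2$ and $q_3$ each have characteristic polynomial $-\mu(\mu^2 - s^2/4)$, so eigenvalues $\{0,\pm s/2\}$. For the $\SSS_3$-form $C_L = a\omega_2(\omega_2^2 - 3\omega_3^2)$ the matrix $q_1$ vanishes identically (reflecting the common null direction supplied by Lemma~\ref{Chenlemma}), and $q_2, q_3$ have eigenvalues $\{0,\pm a/2\}$. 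In all three cases the austere condition holds.

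For the necessary direction I would work with the general normal form $C(r,s,a,b)$ from Lemma~\ref{rcubic}, obtained by choosing $\bfe_1$ to lie along an eigenvector of $q_1$ and using the residual $\SO(2)$ rotation of $\bfe_2,\bfe_3$ to set $b=0$ when $s=0$. A direct computation gives $\det q_1 = r(r^2-s^2)/4$, $\det q_2 = a(s-r)^2/8$, and $\det q_3 = -b(r+s)^2/8$. Their simultaneous vanishing reduces to the cases (i)~$r=s=0$, giving the $\SSS_3$-form; (ii)~$r=0$, $s\ne 0$, $a=b=0$, giving the $\AAA_4$-form; and (iii)~the residual $s=\pm r\ne 0$ combined with the opposite one of $a,b$ vanishing. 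The remarks following Proposition~\ref{s2prop2a} identify this last family with the $\SO(3)$-orbit of an $\SSS_3$- or $\AAA_4$-cubic, and thus $C_L$ carries one of the three stated stabilizers. The ambient Lagrangian is then read off from Propositions~\ref{s4prop1}, \ref{s4prop3}, \ref{s4prop4}.

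The hard part will be closing the residual $s = \pm r$ case cleanly. A naive count suggests an extra free parameter (for instance in $2r\omega_1(\omega_1^2 - 3\omega_3^2) + a\omega_2(\omega_2^2 - 3\omega_3^2)$, which is austere in the chosen frame but generically has only trivial $\SO(3)$-stabilizer), and one must show that this apparent family actually collapses onto the $\SSS_3$-orbit. The cleanest way is to exploit the full strength of austerity, requiring $\det A_\nu = 0$ for every unit normal $\nu$ and not merely the three basis normals $J\bfe_i$: here $A_\nu = \sum_i \nu_i q_i$ has nonzero determinant at $\nu = J(\bfe_1+\bfe_2)/\sqrt{2}$ unless $a = 0$. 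Alternatively one can use the $K$-matrix analysis of Table~\ref{fibretable} to match orbit types directly. Once this reduction is in place, the residual family is forced into the $\SSS_3$- or $\AAA_4$-orbit and the classification is complete.
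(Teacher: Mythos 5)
Your normal form, the three determinant computations and the resulting case split coincide with the paper's own proof: there too one writes $C_L=C(r,s,a,b)$ in a Vrancken-adapted frame, computes $\det q_1\propto r(r^2-s^2)$, $\det q_2\propto a(r-s)^2$, $\det q_3\propto b(r+s)^2$, and solves. Where you differ is the residual branch $s=\pm r\neq 0$: the paper disposes of it by asserting, via Table \ref{symtable} and the remarks preceding it, that $C(r,r,a,0)$ has an $\AAA_4$- or $\SSS_3$-stabilizer, an assertion which those remarks only support when $a=0$. You are right to flag this as the delicate point, and right that for generic $a\neq 0$ this cubic has trivial stabilizer.

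The genuine gap is in how you propose to close that branch. Your fix is the Harvey--Lawson criterion $\det A_\nu=0$ for \emph{every} unit normal $\nu$; but the $\AAA_4$-cubic $3s\,\omega_1(\omega_2^2-\omega_3^2)$ itself fails this criterion, since for it $\det A_\nu\propto\nu_1(\nu_2^2-\nu_3^2)$, so for instance the shape operator in the direction $J(\bfe_1+\bfe_2)/\sqrt{2}$ has eigenvalues proportional to $\frac{1}{2}(1\pm\sqrt{5})$ and $-1$, not a set of the form $\{0,\pm\lambda\}$. The $\AAA_4$ example is austere only in the sense of Definition \ref{austere} as used in the paper's proof, namely that the three operators $A_{J\bfe_i}$ of the adapted frame have symmetric spectra. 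So your argument proves too much in one direction or too little in the other: if austerity means the all-normals condition, your necessity step does kill $C(r,r,a,0)$, but your sufficiency check for the $\AAA_4$ form (which tests only the three basis normals) collapses and that case cannot stand in the conclusion; if austerity means the frame condition, your sufficiency is fine but the extra normal directions you invoke are not available, and $C(r,r,a,0)$ with $a\neq 0$ is not eliminated (the $K$-matrix of Table \ref{fibretable} only confirms its stabilizer is trivial, it does not remove it from the austere class). There is also a smaller slip: even granting the strong criterion, $\det(q_1+q_2)=-2ra(2r+a)$, so the single direction $J(\bfe_1+\bfe_2)/\sqrt{2}$ only forces $a\in\{0,-2r\}$; you would need a second direction such as $J(\bfe_1-\bfe_2)/\sqrt{2}$, or the full identity $\det(\nu_1q_1+\nu_2q_2+\nu_3q_3)\equiv 0$, to get $a=0$. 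As written, the proposal correctly isolates the hard case but does not close it consistently with the statement being proved.
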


\begin{proof}  Let $L$ be a Lagrangian in $\mathcal{S}^6$ and let $C_L$ be its fundamental cubic.  Clearly a simple 
Lagrangian is austere, so assume $L$ is non-simple, so that $C_L\neq 0$.
 Recall 
the cubic $C(r,s,a,b)$ defined in Lemma \ref{rcubic}.  
By a result of Vrancken \cite{Vrancken3}, there exists a frame on $L$ such that $C_L=C(r,s,a,b)$ for 
some functions $r,s,a,b$.  Moreover, if $s=0$ we can choose $b=0$.  (The key difference between the result in \cite{Vrancken3} and Lemma \ref{rcubic} is 
that $r$ has to be constant if $L$ is ruled or quasi-ruled.) 
 %with $2r\geq |s|$ and 
%$a^2\geq b^2$ such that
%$$C_L=r\omega_1(2\omega_1^2-3\omega_2^2-3\omega_3^2)+3s\omega_1(\omega_2^2-\omega_3^2)+a\omega_2(\omega_2^2-3\omega_3^2)
%+b\omega_3(3\omega_2^2-\omega_3^2)$$
 
We can calculate the quadratic forms $q_i$ as in Definition \ref{austere} as follows:
\begin{align*}
q_1&=2r\omega_1^2-(r-s)\omega_2^2-(r+s)\omega_3^2;\\
q_2&=-2(r-s)\omega_1\omega_2+a(\omega_2^2-\omega_3^2)+2b\omega_2\omega_3;\\
q_3&=-2(r+s)\omega_1\omega_2+b(\omega_2^2-\omega_3^2)-2a\omega_2\omega_3.
\end{align*}
Thus, $L$ is austere if and only if $r(r^2-s^2)$, $(r-s)^2a$ and $(r+s)^2b$ are all zero.  Therefore, 
$C_L$ is, up to a choice of frame, $C(0,s,0,0)$, $C(0,0,a,0)$ or $C(r,r,a,0)$. %(since if $s=0$ we can choose $b=0$).  
By Table \ref{symtable} and the remarks preceding it, these cubics have pointwise $\AAA_4$ or $\SSS_3$-stabilizers.
\end{proof}

\subsection[Z3]{{\boldmath $\Z_3$}}

Suppose that $C_L\neq0$ has a $\Z_3$-stabilizer at each point.  Therefore, there exist an
open dense subset $L^*$ of $L$ and functions $r,a:L^*\rightarrow\R^+$, with $a\neq r\sqrt{2}$, 
such that
$C_L=r\omega_1\big(2\omega_1^2-3\omega_2^2-3\omega_3^2\big)+a\omega_2(\omega_2^2-3\omega_3^2)$
defines a $\Z_3$-subbundle $\mathcal{F}$ of the adapted frame bundle over $L^*$.  

Calculating the Ricci tensor on $L^*$, using Definition \ref{Kdfn} and Proposition \ref{repeat}, 
we find that it has repeated eigenvalues, so $L$ is quasi-Einstein.  Furthermore, Table \ref{fibretable} shows 
that $L$ is non-simple and quasi-Einstein if and only if $C_L$ has a pointwise $\SO(2)$, $\AAA_4$, $\SSS_3$ or 
$\Z_3$-stabilizer.  As mentioned before, quasi-Einstein Lagrangians in $\mathcal{S}^6$ are classified in 
\cite{Deszcz}, so we can give the remaining examples.

\begin{ex}\textbf{(Quasi-ruled tubes in the second normal bundle)}\label{s4ex5}
Let $\bfu:\Sigma\rightarrow\mathcal{S}^6$ be a null-torsion pseudoholomorphic curve with %non-constant curvature and 
no totally geodesic points. Let $h_{\Sigma}$ be the second fundamental form of $\Sigma$ and let 
$\mathcal{U}(\Sigma)$ be its unit tangent bundle.  Define 
$\bfx:\mathcal{U}(\Sigma)\rightarrow\mathcal{S}^6$ by 
$$\bfx:\bft\mapsto \frac{\sqrt{5}}{3}\,\bfu+
\frac{2}{3}\,\bft\times\frac{h_{\Sigma}(\bft,\bft)}{\|h_{\Sigma}(\bft,\bft)\|}.$$
By \cite[Theorem 1]{Deszcz}, $\bfx$ is a Lagrangian immersion. If $\Sigma$ has isolated totally geodesic points, 
$\bfx$ defines an immersion on an open dense subset of $\mathcal{U}(\Sigma)$ and its image $L_5(\bfu,\Sigma)$ is Lagrangian in 
$\mathcal{S}^6$.  %on an open dense subset of $\mathcal{U}(\Sigma)$ and 
 %Let the image of $\bfx$ be $L_5(\bfu,\Sigma)$ is Lagrangian in $\mathcal{S}^6$.  
 Moreover, $L_5(\bfu,\Sigma)$ is quasi-Einstein and does not satisfy Chen's equality.  We deduce from Table \ref{fibretable}, 
Lemma \ref{Chenlemma}, and Propositions \ref{s4prop2}, \ref{s4prop3} and \ref{s4prop4} that 
$L_5(\bfu,\Sigma)$ has fundamental cubic with $\Z_3$-stabilizer as long as $\Sigma\neq\mathcal{S}^2(\frac{1}{6})$.

From Proposition \ref{tubes} and \eq{pholoframe}, we quickly see that $L_5(\bfu,\Sigma)$ is a tube of radius 
$\sin^{-1}(\frac{2}{3})$ in $N_2\Sigma$ about $\Sigma$, and thus is quasi-ruled.
%quasi-ruled tube with a $\frac{2}{3}$-ruling in the second normal bundle.
\end{ex} 

\begin{remark} The Lagrangian $L_2$ given in Example \ref{s4ex1}, whose fundamental cubic has $\AAA_4$-stabilizer, 
is $L_5(\bfu,\Sigma)$ where $\Sigma$ is the Bor\r{u}vka sphere in $\mathcal{S}^6$.
%Notice that $L_2$, given in Example \ref{s4ex1}, has fundamental cubic with pointwise $\AAA_4$-stabilizer but 
%is the special case of $L_5(\bfu,\Sigma)$ for $\Sigma=\mathcal{S}^2(\frac{1}{6})$.  
%We thus assume $\Sigma$ has non-constant curvature to exclude this possibility.
\end{remark}

Combining \cite[Theorems 1 \& 2]{Deszcz} and our observations thus far, we get the next result.  
 Notice that the constant curvature null-torsion pseudoholomorphic curves are 
totally geodesic 2-spheres and $\mathcal{S}^2(\frac{1}{6})$.   
 %since the flat pseudoholomorphic $T^2$ in $\mathcal{S}^6$ 
 %is linearly full in a totally geodesic $\mathcal{S}^5$, and thus has non-zero torsion.

\begin{prop}\label{s4prop6}
A connected, non-simple, Lagrangian $L$ in $\mathcal{S}^6$ has a fundamental 
cubic with a $\Z_3$-stabilizer at each point if and only if there exists an open dense subset $L^*$ of $L$ such 
that, for every point $x\in L^*$, there exist an open set $U\ni x$ and a null-torsion, non-constant curvature,
 pseudoholomorphic curve $\bfu:\Sigma\rightarrow\mathcal{S}^6$ with 
$U\cap L^*=L_5(\bfu,\Sigma)$ as given in Example \ref{s4ex5}.
\end{prop}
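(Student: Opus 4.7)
The plan is to combine the classification of quasi-Einstein Lagrangians from \cite{Deszcz} with Table \ref{fibretable} and the results already established in this section for the $\SO(2)$, $\AAA_4$ and $\SSS_3$-stabilizer cases. I would treat the forward direction first, then the reverse.

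For the forward implication, suppose $C_L$ has a pointwise $\Z_3$-stabilizer on an open dense subset $L^*$. By Proposition \ref{repeat} the matrix $K=K(h)$ of Definition \ref{Kdfn} has exactly two repeated eigenvalues at every such point. Using $R_{ii}=K_{jj}+K_{kk}+2$ and $R_{ij}=-K_{ij}$ from the remark following Definition \ref{Kdfn}, the Ricci tensor of $L$ inherits this property, so $L^*$ is quasi-Einstein. I would then invoke \cite[Theorems 1 \& 2]{Deszcz}, which describe every non-simple quasi-Einstein Lagrangian locally as one of the Examples \ref{s4ex2}, \ref{s4ex3}, \ref{s4ex4}, \ref{s4ex5}. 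Propositions \ref{s4prop2} and \ref{s4prop4} identify the first three families as having $\SO(2)$ and $\SSS_3$-stabilizers, and the subcase $\Sigma=\mathcal{S}^2(\tfrac{1}{6})$ of Example \ref{s4ex5} produces $L_2$ whose stabilizer is $\AAA_4$ by Proposition \ref{s4prop3}. By elimination $L^*$ is locally $L_5(\bfu,\Sigma)$ for a null-torsion pseudoholomorphic curve $\Sigma$ not congruent to $\mathcal{S}^2(\tfrac{1}{6})$. Finally, \cite[Theorem B]{Sekigawa} combined with \cite[Theorem 4.6]{Bryant1} shows that a non-totally-geodesic constant curvature null-torsion pseudoholomorphic curve in $\mathcal{S}^6$ is necessarily the Bor\r{u}vka sphere, so the surviving $\Sigma$ must be non-constant curvature, which is exactly the required conclusion.

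For the reverse implication I would appeal to Example \ref{s4ex5} itself. Given $\bfu:\Sigma\to\mathcal{S}^6$ null-torsion with non-constant curvature, \cite[Theorem 1]{Deszcz} guarantees that $L_5(\bfu,\Sigma)$ is a quasi-Einstein Lagrangian defined on an open dense subset of $\mathcal{U}(\Sigma)$. Its fundamental cubic therefore has stabilizer in one of the rows of Table \ref{fibretable} corresponding to repeated eigenvalues of $K$; the $\SO(2)$ option would force $L=L_1$ of Proposition \ref{s4prop2}, whose underlying surface is totally geodesic, the $\SSS_3$ option would put $L$ in the Chen-equality family of Proposition \ref{s4prop4} (a tube of radius $\tfrac{\pi}{2}$, ruled rather than quasi-ruled with ruling radius $\tfrac{2}{3}$), and the $\AAA_4$ option would give constant curvature $\tfrac{1}{16}$ and force $\Sigma=\mathcal{S}^2(\tfrac{1}{6})$ by Proposition \ref{s4prop3} together with the discussion after Example \ref{s4ex1}. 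All three are excluded by hypothesis, so the stabilizer is $\Z_3$ as required.

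The main obstacle is the bookkeeping in the forward step: matching the list of families in \cite{Deszcz} cleanly with the Examples \ref{s4ex2}--\ref{s4ex5} used here, and showing that the dichotomy "$\Sigma=\mathcal{S}^2(\tfrac{1}{6})$ versus $\Sigma$ has non-constant curvature" inside the $L_5$ family corresponds exactly to the stabilizer jumping from $\Z_3$ up to $\AAA_4$. The Sekigawa--Bryant input is what makes this elimination airtight, since without it one cannot rule out further constant curvature null-torsion examples contributing to the $\AAA_4$ locus.
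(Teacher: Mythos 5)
Your proposal is correct and follows essentially the same route as the paper: the paper likewise deduces the quasi-Einstein condition from Proposition \ref{repeat} and Table \ref{fibretable}, invokes the Deszcz classification \cite[Theorems 1 \& 2]{Deszcz}, identifies the competing families via Propositions \ref{s4prop2}--\ref{s4prop4}, and uses the Sekigawa--Bryant observation that constant curvature null-torsion curves are totally geodesic spheres or $\mathcal{S}^2(\frac{1}{6})$. The only cosmetic difference is that the paper rules out the $\SSS_3$ possibility for $L_5(\bfu,\Sigma)$ by noting it does not satisfy Chen's equality (Lemma \ref{Chenlemma}) rather than by your ruled-versus-quasi-ruled tube-radius remark.
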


Now, for possible interest, we record the structure equations. 
Recall that we have a $\Z_3$-subbundle $\mathcal{F}$ of the adapted frame bundle over 
an open dense subset $L^*$ of $L$. %, there exist functions $t_{ij}$ such that $\alpha_i=t_{ij}\omega_j$.
 We deduce from \eq{secondL} that $r=2\sqrt{5}$, so we can take $L^*=L$ and
%From \eq{secondLa}, \eq{secondLb} and \eq{secondLd}, 
%we deduce that $t_{11}=t_{22}=t_{33}=-\frac{1}{2}$, $t_{21}=t_{31}=0$ and that
%$t_{23}=-t_{32}$.  Moreover, using \eq{secondLc}, we find that $t_{23}=0$ and 
%that $r$ must be constant and equal to $2\sqrt{5}$.  Thus,
 $C_L=2\sqrt{5}\,\omega_1\big(2\omega_1^2-3\omega_2^2-3\omega_3^2\big)+a\omega_2(\omega_2^2-3\omega_3^2)$.  
 Further, there exist functions $t_2,t_3$ on $\mathcal{F}$ such that:%if we let $t_2=-t_{13}$ and $t_3=t_{12}$ then:
\begin{subequations}\label{z3eqs1}
\begin{align}
\alpha_1=-\textstyle\frac{1}{2}\omega_1+t_3\omega_2-t_2\omega_3;\qquad
\alpha_2=-\textstyle\frac{1}{2}\omega_2;\qquad
\alpha_3=-\textstyle\frac{1}{2}\omega_3;
\end{align}
and
\begin{align}
\d a=-3a(t_2\omega_2+t_3\omega_3).
\end{align}
\end{subequations}
We may therefore write down the structure equations:
\begin{subequations}\label{z3eqs2}
\begin{align}
\d\omega_1&=\omega_2\wedge\omega_3;\\
\d\omega_2&=-t_3\omega_2\wedge\omega_3+\omega_3\wedge\omega_1;\\
\d\omega_3&=t_2\omega_2\wedge\omega_3+\omega_1\wedge\omega_2.
\end{align}
\end{subequations}
Furthermore, %also using the fact that $\d(\d a)=0$, 
there exist functions $u_2$ and $u_3$ such that:
\begin{subequations}\label{z3eqs3}
\begin{align}
\d t_2&=-t_3\omega_1+\left(\textstyle\frac{1}{16}a^2+u_2-\textstyle\frac{1}{2}(t_2^2+t_3^2+5)\right)\omega_2+u_3\omega_3;\\
\d t_3&=t_2\omega_1+u_3\omega_2+\left(\textstyle\frac{1}{16}a^2-u_2-\textstyle\frac{1}{2}(t_2^2+t_3^2+5)\right)\omega_3.
\end{align}
\end{subequations}

%Using this information, one can show that the obvious EDS is involutive and the last non-zero character is $s_1=2$.  
%Thus, locally, these Lagrangian submanifolds depend on 2 functions of 1 variable.  This tallies with the description 
%in Proposition \ref{s4prop6} as the Lagrangians are given locally by null-torsion pseudoholomorphic curves.

\begin{remark}
We can interpret a result of Fox \cite[Theorem 9.3]{Fox2} as follows: a coassociative cone $N_0$ on a
 Lagrangian in $\mathcal{S}^6$ whose fundamental cubic has $\Z_3$-stabilizer is the limit, as $t\rightarrow 0$, of 
a family of nonsingular coassociative 4-folds $N_t$ in $\Im\O$, 
which are \emph{asymptotically conical} to $N_0$ at infinity.  
\end{remark}

\subsection[Z2]{{\boldmath $\Z_2$}}

%We finally consider the possibility that $C_L\neq0$ has a $\Z_2$-stablizer at each point. 
In similar second order family 
studies to the type considered here, the $\Z_2$ case is typically the most complicated and 
hardest to classify.  However, for Lagrangians in $\mathcal{S}^6$, it could not be easier \cite[Theorem 2]{Vrancken}.

\begin{prop}
The only connected Lagrangian submanifolds of $\mathcal{S}^6$ whose fundamental
 cubic has a $\Z_2$-stabilizer at each point are simple.
\end{prop}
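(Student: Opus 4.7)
First I would pass to an open dense subset $L^*$ of $L$ and the $\Z_2$-subbundle $\mathcal{F}$ of the adapted frame bundle of $L^*$ on which the fundamental cubic takes the normal form
\[
C_L = r\omega_1(2\omega_1^2 - 3\omega_2^2 - 3\omega_3^2) + 3s\omega_1(\omega_2^2 - \omega_3^2),
\]
with $r,s>0$ and $r\neq s$, as allowed by Proposition \ref{s2prop2a}. Since $\mathcal{F}\to L^*$ is a discrete cover, every 1-form on $\mathcal{F}$ is semibasic, so there are functions $r_i,s_i,t_{ij}$ with $\d r=r_i\omega_i$, $\d s=s_i\omega_i$, and $\alpha_i=t_{ij}\omega_j$ (using the convention $\alpha_1=\alpha_{23}$, $\alpha_2=\alpha_{31}$, $\alpha_3=\alpha_{12}$).

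Next I would extract $\beta=2h\omega$ explicitly from the normal form. The non-zero components of $h$ are $h_{111}=r/4$, $h_{122}=(s-r)/8$, and $h_{133}=-(r+s)/8$, together with the index symmetries; consequently the matrix $\beta$ is diagonal up to $\beta_{12}\propto\omega_2$ and $\beta_{13}\propto\omega_3$, while $\beta_{23}=0$. Substituting into \eq{secondLd} and matching coefficients of the linearly independent 2-forms $\omega_i\wedge\omega_j$ entry-by-entry gives a linear system in the $t_{ij}$ and some of the $r_i, s_i$. Combining this with \eq{secondLa} for $\d\omega_i$ and with the Gauss equation \eq{secondLc} then determines most of these functions and produces further polynomial identities in $r,s$ with coefficients depending on the surviving unknowns.

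The final step is to apply $\d^2=0$ to the resulting formulae for $\d r$, $\d s$, and $\alpha_i$. Each prolongation introduces further polynomial constraints, and I expect the system to close up only when one of $r=s$, $r=0$, or $s=0$ holds. Each of these drives $C_L$ onto the $\SO(3)$-orbit of a cubic with strictly larger stabilizer, by Table \ref{symtable} and the remarks following Proposition \ref{s2prop2a}: $r=s$ puts $C_L$ on the $\SSS_3$-orbit, $s=0$ gives $\SO(2)$-symmetry, and $r=0$ yields $\AAA_4$ or $\SSS_3$-symmetry. Each of these outcomes contradicts the standing hypothesis that the stabilizer is exactly $\Z_2$, so $L^*$ must be empty; hence $C_L\equiv 0$ on $L$, and $L$ is simple.

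The main obstacle is the bulk of the computation. There is no visible conceptual shortcut that avoids unwinding \eq{secondLd} entry-by-entry, feeding the output into \eq{secondLc}, and then prolonging via $\d^2=0$; the real work lies in organising the polynomial identities in $r$ and $s$ so that the degeneracy becomes visible without the algebra collapsing prematurely or hiding a case.
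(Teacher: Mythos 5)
There is a genuine gap here: the step that carries the entire content of the proposition is only conjectured, not proved. Your set-up is sound and is the natural one (it mirrors the paper's treatment of the other stabilizer types and of $C(r,s,a,b)$ in $\S$\ref{s8}): the normal form, the semibasicity of $\alpha_i$, $\d r$, $\d s$ on the discrete $\Z_2$-reduction, and the components $h_{111}=r/4$, $h_{122}=(s-r)/8$, $h_{133}=-(r+s)/8$ are all correct. But the decisive assertion --- that feeding $\beta=2h\omega$ into \eq{secondLa}, \eq{secondLc}, \eq{secondLd} and prolonging with $\d^2=0$ forces $r=s$, $r=0$ or $s=0$ --- is announced with ``I expect'' and never derived. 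That cannot be waved through: in the $\SSS_3$ and $\Z_3$ cases the analogous systems do \emph{not} collapse (they are involutive with $s_1>0$ and produce nontrivial families), so the collapse in the $\Z_2$ case is precisely the nonobvious fact to be established, and you have not exhibited the polynomial identities that produce it. Note also that the paper itself offers no computation to check yours against: it simply quotes \cite[Theorem 2]{Vrancken}, so a self-contained argument along your lines would be welcome, but only once the algebra is actually carried out (the computations in $\S$\ref{s8} leading to \eq{quasi} are the closest model).

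A secondary point: your concluding inference is garbled. If the stabilizer is exactly $\Z_2$ at every point then $C_L$ is nowhere zero and the adapted $\Z_2$-reduction exists on all of $L$ (up to passing to a cover), so a contradiction derived from the structure equations does not show that ``$L^*$ is empty, hence $C_L\equiv 0$''; it shows that no connected Lagrangian with pointwise $\Z_2$-stabilizer exists at all, which is how the proposition's conclusion (``only the simple ones'') should be read, simple Lagrangians having $C_L=0$ with full $\SO(3)$-stabilizer. As written, you deduce $C_L\equiv 0$ on a manifold on which you assumed $C_L\neq 0$ pointwise, so the last three lines need to be rephrased even after the computation is supplied.
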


\subsection[Summary]{Summary}

We have shown that a Lagrangian $L$ in $\mathcal{S}^6$ whose fundamental cubic has a non-trivial stabilizer in $\SO(3)$ 
at each point is either ruled or quasi-ruled.  Moreover, $L$ is
 a Hopf lift to $\mathcal{S}^5\subseteq\mathcal{S}^6$ of a holomorphic curve in $\C\P^2$, or 
given locally as a tube about a pseudoholomorphic curve $\Sigma$.  
 Further, the tube is in $N_2\Sigma$ if $\Sigma$ is non-totally geodesic. 
 %(so that $N_2\Sigma$ is well-defined).
  %Furthermore, the tube is in the second 
 %normal bundle whenever the curve is non-totally geodesic (so that the second normal bundle is well-defined).  
Thus, we can summarise our results by associating to each non-trivial stabilizer a holomorphic curve, or a pseudoholomorphic curve and 
a tube radius.  This is the content of Table \ref{summary} below.

\begin{table}[h]
\begin{center}
\begin{tabular}{|c|c|c|c|}
\hline
Stabilizer & Holomorphic curve & Pseudoholomorphic curve & Tube radius 
\\
\hline
$\SO(3)$ & Totally geodesic 
 %$\C\P^1$ 
& Totally geodesic 
 %$\mathcal{S}^2$ 
& $\frac{\pi}{2}$ 
\\
$\SO(2)$ & & Totally geodesic 
 %$\mathcal{S}^2$ 
& $\sin^{-1}(\frac{2}{3})$
\\
$\AAA_4$ & & Null-torsion 
$\mathcal{S}^2(\frac{1}{6})$ & $\sin^{-1}(\frac{2}{3})$
\\
$\SSS_3$ & %$\Sigma_1\neq\C\P^1$ %
Non-totally geodesic %$\Sigma_1$ 
& %Lift of holomorphic curve in $\C P^2$ 
%or any 
Non-totally geodesic %$\Sigma_2$ 
%$\Sigma_2\neq\mathcal{S}^2$
& $\frac{\pi}{2}$
\\
$\Z_3$ & & Null-torsion not $\mathcal{S}^2(\frac{1}{6})$ & $\sin^{-1}(\frac{2}{3})$
\\
$\Z_2$ & Totally geodesic 
%$\C\P^1$ 
& 
Totally geodesic 
%$\mathcal{S}^2$ 
& $\frac{\pi}{2}$
\\
\hline
\end{tabular}
\caption{Summary of examples as Hopf lifts of holomorphic curves in $\C\P^2$ and 
tubes about pseudoholomorphic curves in $\mathcal{S}^6$}\label{summary}
\end{center}
\end{table}

\begin{remark}
For stabilizer $\GGG\neq\SSS_3$ or $\Z_3$, the examples in Table \ref{summary} are rigid; 
i.e.~they are unique up to $\GGG_2$ transformations of $\mathcal{S}^6$. However, the $\SSS_3$ and $\Z_3$ 
examples have non-trivial deformations given by deformations of the underlying curve.
\end{remark}

\section[The ruled Lagrangian family]{The ruled Lagrangian family}\label{s7}

Before we discuss the general ruled family, we prove the following result.

\begin{prop}\label{ruledsimple}
Any connected Lagrangian in $\mathcal{S}^6$ with two distinct 1-rulings is simple.
\end{prop}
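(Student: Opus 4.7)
The plan is to adapt an orthonormal frame $\{\bfe_1,\bfe_2,\bfe_3\}$ so that $\bfe_1$ is the direction of the first 1-ruling; by Lemma \ref{rcubic} the fundamental cubic then takes the form $C_L = C(0,s,a,b)$, with $b=0$ when $s=0$. Writing the direction of the second 1-ruling at a point as $\bfv = v_1\bfe_1+v_2\bfe_2+v_3\bfe_3$ with $\bfv\neq\pm\bfe_1$ (so $(v_2,v_3)\neq(0,0)$), the necessary condition $h_L(\bfv,\bfv)=0$ becomes three scalar equations obtained by extracting the $J\bfe_i$ components:
\begin{align*}
s(v_2^2-v_3^2)&=0,\\
2sv_1v_2+a(v_2^2-v_3^2)+2bv_2v_3&=0,\\
-2sv_1v_3+b(v_2^2-v_3^2)-2av_2v_3&=0.
\end{align*}

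If $s=0$ (so also $b=0$), these reduce to $a(v_2^2-v_3^2)=av_2v_3=0$; since $(v_2,v_3)\neq(0,0)$ this forces $a=0$, giving $C_L=0$ and thus $L$ simple by Proposition \ref{s4prop1}. If $s\neq 0$, the first equation gives $v_2=\eps v_3$ with $\eps\in\{\pm 1\}$, and substituting into the other two yields $b=\eps a$. A direct check shows that the resulting cubic $C(0,s,a,\eps a)$ is invariant under the order-two rotation about the axis $(0,1,-\eps)/\sqrt{2}$, so $C_L$ admits a pointwise $\Z_2$-symmetry on the open set where $s\neq 0$ (enhanced to $\AAA_4$ where additionally $a=0$).

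By real analyticity, the stabilizer property propagates to all of $L$. The classification in $\S$\ref{s6} shows that any connected Lagrangian whose fundamental cubic has a pointwise $\Z_2$-stabilizer is simple, while the only non-simple Lagrangian whose fundamental cubic has a pointwise $\AAA_4$-stabilizer is the Lagrangian $L_2$ of Example \ref{s4ex1} (Proposition \ref{s4prop3}), which carries a $\frac{2}{3}$-ruling rather than a 1-ruling. In either event the hypothesis $s\neq 0$ leads to a contradiction, so $L$ must be simple.

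The main obstacle is the case $s\neq 0$: one must recognise that the algebraic constraint $b=\eps a$ forced by the second 1-ruling already endows the cubic with a nontrivial pointwise symmetry, so that the classification of $\S$\ref{s6} can be invoked to conclude. The remainder of the argument is routine manipulation of the three polynomial equations.
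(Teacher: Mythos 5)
Your algebraic core is sound (the three equations, the dichotomy $s=0\Rightarrow C_L=0$ versus $s\neq0\Rightarrow b=\eps a$ with an explicit $\Z_2$-symmetry), but the decisive step fails: you dismiss the $\AAA_4$ branch on the grounds that $L_2$ ``carries a $\frac{2}{3}$-ruling rather than a 1-ruling''. Nothing you cite (Proposition \ref{s4prop3}, Table \ref{summary}) asserts that $L_2$ admits no ruling by geodesic circles, and locally this assertion is actually false. In the $\AAA_4$-adapted frame one has $\alpha=-\frac{1}{2}[\omega]$ and $h_{111}=h_{112}=h_{113}=0$, so the integral curves of $\bfe_1$ are geodesics of $L_2$ along which $h_{L}(\bfe_1,\bfe_1)=0$, i.e.\ great circles of $\mathcal{S}^6$ contained in $L_2$; concretely, under the identification of Example \ref{s4ex1}, rotating $-\sqrt{6}\,xyz$ about the $z$-axis sweeps out the great circle $\cos 2\theta\,(-\sqrt{6}\,xyz)+\sin 2\theta\,\tfrac{\sqrt{6}}{2}(x^2-y^2)z$ lying inside the orbit $L_2$, and by the tetrahedral symmetry there are three such circles through every point. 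Hence no pointwise or local second-order argument of the kind you use can eliminate the $\AAA_4$ case. What saves the proposition is a global fact: the three critical directions of the $\AAA_4$-cubic are permuted transitively by the monodromy of the adapted frame bundle of $L_2\cong\SO(3)/\AAA_4$, so none of the three local foliations extends to a fibration $(\Sigma,\pi)$ as required by the definition of a $1$-ruling. Your proof contains no such global argument, so the contradiction you claim in the $\AAA_4$ case is not established.

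Note also that your route is not the paper's: the paper proves the proposition by passing to the cone, which by Lemma \ref{r2rlemma} is a coassociative cone with two distinct 2-rulings, embedding it as a Cayley cone in $\O\cong\R^8$, and quoting the rigidity results of \cite{Lotay2R} for 2-ruled Cayley cones to conclude it is a 4-plane; your argument is the alternative sketched in the remark immediately after the proposition (and that remark glosses over exactly the same point you do). Two smaller issues in your write-up: ``by real analyticity, the stabilizer property propagates to all of $L$'' is not an argument --- what you actually need is to work on the open set where the ruling directions differ and $s\neq0$, and to justify that the stabilizer type is constant there before invoking the $\S$\ref{s6} propositions, which are stated for a fixed pointwise stabilizer on a connected Lagrangian; and you should say why $C(0,s,a,\eps a)$ with $s\neq0$ cannot lie on the $\SO(2)$-, $\SSS_3$- or $\Z_3$-orbits (it cannot, because those cubics have at most one nonzero critical direction while yours has at least two, but this needs to be stated).
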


\begin{proof} If a connected Lagrangian $L$ in $\mathcal{S}^6$ has two distinct 1-rulings then the cone $N$ on $L$ in $\Im\O$ is  
coassociative with two distinct 2-rulings by Lemma \ref{r2rlemma}. In $\O\cong\R^8$ there are calibrated 4-dimensional 
submanifolds called \emph{Cayley} 4-folds.  If we embed $N$ in $\O=\R\oplus\Im\O$ as $C=\{0\}\times N$, 
then $C$ is a 2-ruled Cayley cone by \cite[Proposition 2.11]{Lotay2R}.  Moreover, $C$ has two distinct 2-rulings and so, by 
 \cite[Proposition 4.4]{Lotay2R}, must be a 4-plane.  Thus, $N$ is a 4-plane and $L$ is simple as claimed.
%Suppose $L$ is a ruled Lagrangian in $\mathcal{S}^6$ with two distinct 1-rulings.  By Lemma \ref{rcubic}, its
% fundamental cubic $C_L$ must be linear in two distinct variables since $C_L$ 
%has to be linear in its ruling directions.  
%
%By \cite[Proposition 3]{Bryant2}, if $C_L$ is linear in three variables on some open dense subset $L^*$ of $L$,
% then it must have $\AAA_4$-stabilizer
%at each point of $L^*$. It follows from Table \ref{summary} that $L$ must be simple because 
%the non-simple Lagrangians with $\AAA_4$-stabilizer are only quasi-ruled.  
%
%Suppose $C_L$ is linear in just two distinct variables on some open dense subset 
%$L^*$.  By \cite[Propositions 2 \& 3]{Bryant2}, $C_L$ has a symmetry of order 2, so $C_L$ must have 
%generic stabilizer $\Z_2$ or $\SSS_3$.  However, from Table \ref{symtable}, we see that the cubics with 
%$\SSS_3$-stabilizer are not linear in two distinct variables, so $\Z_2$ must be the generic stabilizer of $C_L$.  
%We conclude from Table \ref{summary} that $L$ again must be simple.
\end{proof}

\begin{remark} This is the analogue of \cite[Theorem 6 part 4]{Bryant2} and we could have proved it in an 
analogous manner.  The key points are that a Lagrangian $L$ has two distinct 1-rulings if and only if $C_L$ has 
$\AAA_4$ or $\Z_2$-stabilizer, and that the only ruled Lagrangians such that $C_L$ has $\AAA_4$ or $\Z_2$-stabilizer are simple by 
Table \ref{summary}.
%This result is not new.  The author showed in \cite[Proposition 4.4]{Lotay2R} that a Cayley 4-fold in $\R^8$ with 
%two distinct 2-rulings is planar.  Therefore the same is true for coassociative 4-folds in $\R^7$, as 
%they can be considered as special cases of Cayley 4-folds.  By Lemma \ref{r2rlemma}, we could have deduced the proposition
% above from this earlier result.
\end{remark}

Let $L$ be a Lagrangian in $\mathcal{S}^6$ ruled by geodesic circles. By Lemma \ref{rcubic}, 
using the notation there, we can choose a frame on $L$ such that the fundamental cubic $C_L=C(0,s,a,b)$ for some  
functions $s,a,b$.  %We wish to consider the possibility that $C_L$ does not have a pointwise symmetry.
We are interested in the possibility of ruled Lagrangian submanifolds $L$ for which $C_L$ does not have a pointwise symmetry, so we make this assumption.

As stated in Lemma \ref{rcubic}, if $s=0$ we can choose $b=0$.  However, $C(0,0,a,0)$ has at least an 
$\SSS_3$-stabilizer at each point by Table \ref{symtable}. We also notice from Table \ref{symtable} 
that $C(0,s,0,0)$ has a pointwise symmetry.  Thus, 
we assume that $s$ and $a^2+b^2$ are both non-zero on some open dense subset $L^*$ of $L$.

Using \eq{secondL} we calculate:
\begin{subequations}\label{ruledeqs1}
\begin{align}
\alpha_1=& \,t_0\omega_1+t_3\omega_2-t_2\omega_3;\displaybreak[0]\\
\alpha_2=&-(1+t_0)\omega_2+t_1\omega_3;\displaybreak[0]\\
\alpha_3=&-t_1\omega_2-(1+t_0)\omega_3;\displaybreak[0]\\
\d s=&-2s(t_1\omega_1+t_2\omega_2+t_3\omega_3);\displaybreak[0]\\
\d a=&-\big(2st_2+at_1+b(1+2t_0)\big)\omega_1+(c_1-st_1-\textstyle\frac{3}{2}at_2-\textstyle\frac{3}{2}bt_3)\omega_2\nonumber\\
&+\big(c_2-(1+2t_0)s-\textstyle\frac{3}{2}at_3-\textstyle\frac{3}{2}bt_2\big)\omega_3;\displaybreak[0]\\
\d b=&-\big(2st_3-a(1+2t_0)+bt_1\big)\omega_1+\big(c_2+(1+2t_0)s+\textstyle\frac{3}{2}at_3+\textstyle\frac{3}{2}bt_2\big)\omega_2\nonumber\\
&-(c_1+st_1+\textstyle\frac{3}{2}at_2+\textstyle\frac{3}{2}bt_3)\omega_3;
\end{align}
\end{subequations}
for some functions $t_0$, $t_1$, $t_2$, $t_3$, $c_1$, $c_2$.  Thus, the structure equations are:
\begin{subequations}\label{ruledeqs2}
\begin{align}
\d\omega_1&=-2t_0\omega_2\w\omega_3;\\
\d\omega_2&=-t_3\omega_2\w\omega_3+\omega_3\w\omega_1+t_1\omega_1\w\omega_2;\\
\d\omega_3&=t_2\omega_2\w\omega_3-t_1\omega_3\w\omega_1+\omega_1\w\omega_2.
\end{align}
\end{subequations}
Moreover, there exist functions $u_1,u_2,u_3,u_4$ such that:
%the functions $t_0$, $t_1$, $t_2$, $t_3$ also satisfy
\begin{subequations}\label{ruledeqs3}
\begin{align}
\d t_0&=-2t_0t_1\omega_1+(u_1-\textstyle\frac{1}{16}sb)\omega_2+(\textstyle\frac{1}{16}sa+u_2)\omega_3;\\
\d t_1&=\big(\textstyle\frac{1}{16}s^2+t_0^2-t_1^2-4\big)\omega_1+(\textstyle\frac{1}{16}sa-u_2)\omega_2+(u_1+\textstyle\frac{1}{16}sb)\omega_3;\\
\d t_2&=\big(\textstyle\frac{1}{16}sa-t_3-t_1t_2-u_2\big)\omega_1+\big(u_4-t_0t_1\big)\omega_3\nonumber\\
&+\big(\textstyle\frac{1}{2}(\textstyle\frac{1}{16}s^2+\textstyle\frac{1}{8}a^2+\textstyle\frac{1}{8}b^2-t_0(2+3t_0)-t_1^2-t_2^2-t_3^2-4)+u_3\big)\omega_2;\\%\nonumber\\&+\big(u_4-t_0t_1\big)\omega_3;\\
\d t_3&=\big(\textstyle\frac{1}{16}sb+t_2-t_3t_1+u_1\big)\omega_1+\big(u_4+t_0t_1\big)\omega_2\nonumber\\
&+\big(\textstyle\frac{1}{2}(\textstyle\frac{1}{16}s^2+\textstyle\frac{1}{8}a^2+\textstyle\frac{1}{8}b^2-t_0(2+3t_0)-t_1^2-t_2^2-t_3^2-4)-u_3\big
)\omega_3.
\end{align}
\end{subequations}
%for some functions $u_1,u_2,u_3,u_4$.

Setting up the appropriate EDS here, we find that it is involutive with last non-zero Cartan character $s_1=6$.  
Thus, ruled Lagrangian submanifolds of $\mathcal{S}^6$ depend locally on 6 functions of 1 variable.  
The largest ruled family we have seen so far (Example \ref{s4ex4}) depends only on 4 functions of 1 variable locally,
 so there must be another family describing the general ruled Lagrangians.

%\begin{note}
%We could have reached this conclusion without this analysis.  This is because ruled Lagrangian 3-folds are 
%in direct correspondence with 2-ruled coassociative cones which in turn, by the work of Fox \cite{Fox}, correspond 
%to CR-holomorphic curves in the Grassmanian of oriented 2-planes in $\R^7$.  These CR-holomorphic curves depend
%locally on 6 functions of 1 variable, in agreement with our calculation here.
%\end{note}

%The aim of the rest of this section is to 
We shall describe a family of ruled Lagrangians in $\mathcal{S}^6$ 
with the ``right'' local dependence on functions of one variable, then prove that this family provides a 
local classification for the generic ruled Lagrangian.  We start with a definition.

\begin{dfn}\label{Xdfn}
Let $\bfu:\Sigma\rightarrow\mathcal{S}^6$ be a pseudoholomorphic curve and use the notation of $\S$\ref{pholo}.  
In particular, denote a special unitary frame for $\bfu^*(T\mathcal{S}^6)$ by $\{\bff_1,\bff_2,\bff_3\}$ such that 
$\bff_1$ spans $T^{1,0}\Sigma$ and $\{\bff_2,\bff_3\}$ is a basis for $N\Sigma$.  %We continue the practice 
%of identifying a vector at $p\in\Sigma$ with its push-forward in $T_p\mathcal{S}^6$ and hence with a vector in 
%$\langle p\rangle^{\perp}\subseteq\Im\O$.
 Let $\mathcal{B}(\Sigma)$ 
be the $\U(2)$-bundle of such frames $\{\bff_1,\bff_2,\bff_3\}$ over $\Sigma$.  

We define two $\U(1)$ actions on $\mathcal{B}(\Sigma)$.  The first, $\U(1)_l$, is the action which fixes $\bff_3$: 
$$(\bff_1,\bff_2,\bff_3)\longmapsto (e^{it}\bff_1,e^{-it}\bff_2,\bff_3).$$
The second, $\U(1)_r$, is the rotation of $\bff_3$:
$$(\bff_1,\bff_2,\bff_3)\longmapsto (e^{-it}\bff_1,e^{-it}\bff_2,e^{2it}\bff_3).$$
Let $\mathcal{Q}(\Sigma)=\mathcal{B}(\Sigma)/\U(1)_l$ and let $\mathcal{X}(\Sigma)=\mathcal{Q}(\Sigma)/\U(1)_r$. %, 
%so that $\mathcal{X}(\Sigma)$ is a (real) 4-manifold.  
Note that we have a projection $\pi_\mathcal{X}:\mathcal{Q}(\Sigma)\rightarrow \mathcal{X}(\Sigma)$ 
whose fibres are circles.  
\end{dfn}

%We observe that 
Clearly, surfaces in the 4-manifold $\mathcal{X}(\Sigma)$ lift to 3-dimensional submanifolds of $\mathcal{Q}(\Sigma)$ via
 $\pi_{\mathcal{X}}$.  Moreover, these 3-folds can be thought of as tubes in $N\Sigma$ about $\Sigma$.  Hence, 
the tubes of this type which are Lagrangian are equivalent to distinguished surfaces in $\mathcal{X}(\Sigma)$.  
This motivates the following key result. %leading to examples of ruled Lagrangians.

\begin{thm}\label{Xcomplex}  Let $\bfu:\Sigma\rightarrow\mathcal{S}^6$ be a non-totally geodesic pseudoholomorphic curve  
 %with non-zero torsion.  
and use the notation of Definition \ref{Xdfn}.
 There is an integrable complex structure $I$ 
on $\mathcal{X}(\Sigma)$ such that $\mathcal{X}(\Sigma)$ is a holomorphic $\C\P^1$-bundle over $\Sigma$, and a %real 
(real) surface $\Gamma$ in $\mathcal{X}(\Sigma)$ is a holomorphic curve 
if and only if the image of the map $\bfx:\pi_\mathcal{X}^{-1}(\Gamma)\rightarrow\mathcal{S}^6$ given by 
$\bfx=\bff_3+\bar{\bff}_3$ is a ruled Lagrangian in $\mathcal{S}^6$.
\end{thm}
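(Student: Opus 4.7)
The plan is to identify $\mathcal{X}(\Sigma)$ with the projectivisation of the normal bundle $N\Sigma$, which gives an integrable complex structure $I$ making it a holomorphic $\C\P^1$-bundle, and then to compute $\bfx^*\omega$ on $\mathcal{Q}(\Sigma)$ and recognise its vanishing on $\pi_\mathcal{X}^{-1}(\Gamma)$ as the $I$-holomorphicity of $\Gamma$.

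For the bundle structure, since $\Sigma$ is non-totally geodesic, $N\Sigma$ decomposes as $N_1\Sigma \oplus N_2\Sigma$ (cf.\ the discussion after \eq{pholoeq11}), so $N\Sigma$ is a holomorphic rank-$2$ bundle on the Riemann surface $\Sigma$ and $\mathcal{X}(\Sigma) \cong \P(N\Sigma)$ inherits an integrable complex structure $I$.  Pulled back to $\mathcal{B}(\Sigma)$, the $(1,0)$-cotangent of $I$ is spanned by $\theta_1$ and $\bar\kappa_{23}$; their equivariance under $\U(1)_l \times \U(1)_r$ makes this subbundle descend to $\mathcal{X}(\Sigma)$, and integrability follows from \eq{pholoeq5} and \eq{pholoeq11}, which put $\d\theta_1$ and $\d\bar\kappa_{23}$ in the ideal they generate.

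For the main equivalence, compute $\d\bfx = \d\bff_3 + \d\bar\bff_3$ from \eq{pholoeqgen} and then evaluate $\omega_\bfx(u,v) = g(\bfx \times u, v)$ using the multiplication table \eq{times}, together with a unitary frame $\{\bff'_1, \bff'_2, \bff'_3\}$ for $T^{1,0}_\bfx \mathcal{S}^6$ adapted to $\bfx = \bff_3 + \bar\bff_3$.  After substituting $\kappa_{13} = -\bar k_3 \bar\theta_1$ (Cartan's lemma, as in \cite[Lemma 4.4]{Bryant1}) one obtains
$$\bfx^*\omega = \Re\bigl(\kappa_{13}\wedge(\kappa_{23}+\bar\theta_1)\bigr) = -\Re(\bar k_3 \bar\theta_1\wedge\kappa_{23}),$$
a real $(2,0)+(0,2)$-form with respect to $I$.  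The $\U(1)_r$-orbit direction is automatically in the kernel of $\bfx^*\omega$, so the Lagrangian condition on the ruled $3$-manifold $\pi_\mathcal{X}^{-1}(\Gamma)$ reduces to the vanishing of $\bfx^*\omega$ on the tangent plane $T\Gamma$, which happens precisely when $T\Gamma$ is an $I$-complex line.  The ruling by geodesic circles of radius $1$ is automatic, since the $\U(1)_r$-fibres of $\pi_\mathcal{X}$ map under $\bfx$ to great circles $t \mapsto e^{2it}\bff_3 + e^{-2it}\bar\bff_3$.

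The main obstacle is identifying the correct $(1,0)$-cotangent on the fibre of $\mathcal{X}(\Sigma)\to\Sigma$: the naive choice $\kappa_{23}$ would make $\bfx^*\omega$ a $(1,1)$-form, which does not distinguish complex lines; only with $\bar\kappa_{23}$ is $\bfx^*\omega$ of pure $(2,0)+(0,2)$ type, matching the Lagrangian $2$-planes (after ruling out non-holomorphic Lagrangian distributions via a Frobenius argument) with the $I$-complex lines.  Once this identification is made, the equivalence reduces to a straightforward type check.
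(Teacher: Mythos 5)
Most of your outline runs parallel to the paper's proof: your complex structure (with $(1,0)$-forms spanned by $\theta_1$ and $\bar{\kappa}_{23}=-\kappa_{32}$) coincides, away from the isolated zeros of $k_3$, with the one the paper obtains from the decomposable form $\check{\Omega}=\kappa_{32}\w k_3\theta_1$; your integrability argument via \eq{pholoeq5} and \eq{pholoeq11} is the same closure-of-the-ideal computation; and your formula for $\bfx^*\omega$ agrees with the paper's $\frac{1}{4}\bfx^*(\omega)=\Re(\kappa_{32}\w k_3\theta_1)$ up to an irrelevant constant. (The identification of $\mathcal{X}(\Sigma)$ with $\P(N\Sigma)$ is an unverified extra claim, but it is not needed for the statement.)

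However, the final equivalence has a genuine gap. You assert that vanishing of $\bfx^*\omega$ on $T\Gamma$ happens \emph{precisely} when $T\Gamma$ is an $I$-complex line, on the grounds that $\bfx^*\omega$ is of type $(2,0)+(0,2)$. That pointwise claim is false: it is the full complex form $\check{\Omega}$, a complex volume form on the $4$-manifold, whose vanishing on a real $2$-plane characterises complex lines; its real part alone vanishes on a much larger family of totally real planes (in $\C^2$ with $\Omega=\d z_1\w \d z_2$, the plane spanned by $\partial_{x_1}$ and $\partial_{y_2}$ annihilates $\Re\Omega$ but not $\Omega$). So your argument proves "$\Gamma$ holomorphic $\Rightarrow$ image Lagrangian", but not the converse; the parenthetical "Frobenius argument ruling out non-holomorphic Lagrangian distributions" is exactly the missing step and is never supplied, and Frobenius is not the relevant mechanism. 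What closes the gap is a differential identity, the pullback of the nearly K\"ahler relation $\d\omega=3\Re\Omega$: from \eq{pholoeq} one gets $\d\big(\Re\check{\Omega}\big)=-3i\kappa_{33}\w\Im\check{\Omega}$. If the image is Lagrangian, then $\Re\check{\Omega}$ vanishes on the whole $3$-fold $\pi_{\mathcal{X}}^{-1}(\Gamma)$ (not merely on one $2$-plane), hence so does its exterior derivative restricted there; since $\kappa_{33}$ is non-vanishing on the circle fibres while $\Im\check{\Omega}$ is semibasic, this forces $\Im\check{\Omega}|_\Gamma=0$, so $\check{\Omega}|_\Gamma=0$ and $\Gamma$ is $I$-holomorphic. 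Without this step (which is the heart of the paper's proof) the "only if" direction of the theorem is unproven.
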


\begin{proof}
Define $\bfx:\mathcal{Q}(\Sigma)\rightarrow\mathcal{S}^6$ by $\bfx=\bff_3+\bar{\bff}_3$.  This map is certainly 
well-defined on $\mathcal{Q}(\Sigma)$ since it is defined on $\mathcal{B}(\Sigma)$ and the action of $\U(1)_l$ 
fixes $\bff_3$.  %Recall 
Using the structure equations \eq{pholoeq} for the adapted frame bundle over $\Sigma$, %.  Using these equations 
we see that:
$$\d\bfx=-\bar{k}_3\bff_1\bar{\theta}_1-k_3\bar{\bff}_1\theta_1-\bff_2(\bar{\kappa}_{32}-\bar{\theta}_1)-\bar{\bff}_2(\kappa_{32}
-\theta_1)+(\bff_3-\bar{\bff}_3)\kappa_{33},$$
where we remind the reader that $\theta_1$ is the dual 1-form to $\bff_1$, $\kappa$ is a $3\times 3$ 
traceless skew-Hermitian matrix and $k_3$ is a holomorphic function.
Therefore, from \eq{times}, 
$$\bfx\times\d\bfx=-i\bfu\kappa_{33}+\bff_1(\kappa_{32}-\theta_1)+\bar{\bff}_1(\bar{\kappa}_{32}-\bar{\theta}_1)
-k_3\bff_2\theta_1-\bar{k}_3\bar{\bff}_2\bar{\theta}_1.$$
Recall Definition \ref{6spheredfn}.  In particular, the almost complex structure $J$ on $\mathcal{S}^6$ is given by cross product
 with the position vector on $\mathcal{S}^6$ and the almost symplectic form $\omega$ on $\mathcal{S}^6$ is given by
 $\omega(\bfu,\bfv)=J\bfu\,\textbf{.}\,\bfv$. Thus the pull-back of $\omega$ by $\bfx$ is given by
$\bfx^*(\omega)=(\bfx\times\d\bfx)\,\textbf{.}\,\d\bfx$, and we deduce that
\begin{equation}\label{chomega}
\check{\omega}=\frac{1}{4}\,\bfx^*(\omega)=\Re(\kappa_{32}\w k_3\theta_1).
\end{equation}
Again using \eq{pholoeq}, we calculate that 
\begin{equation}\label{chUpsilon}
\d\check{\omega}=-3i\kappa_{33}\w\check{\Upsilon}\quad\text{where}\quad\check{\Upsilon}=\Im(\kappa_{32}\w k_3\theta_1).
\end{equation}
Therefore, we are lead to define the 2-form $\check{\Omega}$ on $\mathcal{Q}(\Sigma)$ by
\begin{equation}\label{chOmega}
\check{\Omega}=\check{\omega}+i\check{\Upsilon}=\kappa_{32}\w k_3\theta_1 .
\end{equation}
By \eq{pholoeq}, it is clear that
\begin{equation}\label{cxstructure}
\d\check{\Omega}=-3\kappa_{33}\w\check{\Omega}.
\end{equation}
Equation \eq{cxstructure} shows that $\check{\Omega}$  pushes down 
to $\mathcal{X}(\Sigma)$.  We also see, from \eq{chomega}-\eq{chOmega}, that
$\bfx:L^3\subseteq\mathcal{Q}(\Sigma)\rightarrow\mathcal{S}^6$ is a Lagrangian immersion if and only if 
$\check{\Omega}|_L\equiv 0$.  

Since $\Sigma$ is non-totally geodesic we can assume that $k_3$ only has 
isolated zeros. Further, as we have not adapted frames so that $\bff_2$ and $\bff_3$ span $N_1\Sigma$ and $N_2\Sigma$, 
 $\kappa_{32}$ is a non-zero 1-form independent of $\theta_1$.  
Thus, $\check{\Omega}$ is a definite 2-form on an open dense subset of $\mathcal{X}(\Sigma)$. Hence, by \eq{cxstructure}, 
$\check{\Omega}$ defines an integrable complex structure $I$ on $\mathcal{X}(\Sigma)$. Notice, trivially, 
that the $(1,0)$-form $\theta_1$ on $\Sigma$ pulls back to $\mathcal{X}(\Sigma)$ to be a linear combination of $\kappa_{32}$ 
and $k_3\theta_1$, away from the zeroes of $k_3$.  Thus, the projection from $\mathcal{X}(\Sigma)$ to $\Sigma$ is holomorphic 
and $\mathcal{X}(\Sigma)$ is a $\C\P^1$-bundle over $\Sigma$.  Moreover, $I$ has the property that 
a real surface $\Gamma$ in $\mathcal{X}(\Sigma)$ is a holomorphic curve %with respect to $I$ 
if and only if  
$L=\pi_\mathcal{X}^{-1}(\Gamma)\subseteq\mathcal{Q}(\Sigma)$ %, which is clearly ruled, 
satisfies $\check{\Omega}|_L\equiv0$.  Since $L$ is clearly ruled, the result follows.
\end{proof}

\noindent If $\Sigma$ is totally geodesic, the proof of Theorem \ref{Xcomplex} shows that every lift of $\Sigma$ to 
$\mathcal{X}(\Sigma)$ defines a ruled Lagrangian.  However, these Lagrangians must be invariant under the $\SO(4)$ subgroup 
of $\GGG_2$ preserving $\Sigma$ and so are simple.

\begin{remarks}
Since $\mathcal{X}(\Sigma)$ is a $\C\P^1$-bundle over a Riemann surface $\Sigma$, it is a \emph{ruled} 
complex surface and so is \emph{algebraic} by Kodaira's classification of complex surfaces.  Therefore, holomorphic curves 
in $\mathcal{X}(\Sigma)$ are well-understood using techniques in algebraic geometry.  Moreover, if $\Sigma$ is non-compact, 
$\mathcal{X}(\Sigma)$ is trivial and so holomorphic curves in $\mathcal{X}(\Sigma)$ are equivalent to holomorphic functions 
from $\Sigma$ to $\C\P^1$.
\end{remarks}

We now present the most general family of ruled Lagrangians in $\mathcal{S}^6$.

\begin{ex}\textbf{(The general ruled family)}\label{s7ex1}
Let $\bfu:\Sigma\rightarrow\mathcal{S}^6$ be a non-totally geodesic pseudoholomorphic curve %with non-zero torsion 
 and use the notation of Definition \ref{Xdfn}.  By Theorem \ref{Xcomplex}, the bundle $\mathcal{X}(\Sigma)$  
is endowed with an integrable complex structure.  
Let $\Gamma$ be a holomorphic curve in $\mathcal{X}(\Sigma)$ and %non-constant holomorphic section of $\mathcal{X}(\Sigma)$; i.e.~a 
%holomorphic curve.  
 define $\bfx:\pi_{\mathcal{X}}^{-1}(\Gamma)\rightarrow\mathcal{S}^6$ by $\bfx=\bff_3+\bar{\bff}_3$.  
By Theorem \ref{Xcomplex}, the image $L_6(\bfu,\Sigma,\Gamma)$ of $\bfx$ is a ruled Lagrangian.  

Let $\Pi$ be the 2-plane bundle over $\Sigma$ defined by $2i\bar{\bff}_3\w\bff_3$.  
By Definition \ref{tubes}, $L_6(\bfu,\Sigma,\Gamma)$ is a tube of radius $\frac{\pi}{2}$ in $\Pi$ 
about $\Sigma$.
% in the plane defined by $2i\bff_3\w\bar{\bff}_3$.  
Furthermore, these examples depend locally on 6 functions 
of 1 variable since our data consists of a pseudoholomorphic curve in $\mathcal{S}^6$ and a holomorphic curve 
in a complex 2-manifold.
\end{ex}

Since $\mathcal{X}(\Sigma)$ is a subbundle of the frame bundle over $\Sigma$, the structure equations on it 
are given by the $\GGG_2$ structure equations \eq{pholoeqgen} for some vector of 1-forms 
$\theta=(\theta_1,\theta_2,\theta_3)^{\rm T}$ and some traceless $3\times 3$ skew-Hermitian matrix $\kappa$.  Moreover, 
 from Definition \ref{Xdfn}, we see that we can adapt frames on $\Sigma$ so that $\theta_3=0$ on $\mathcal{X}(\Sigma)$.
From the proof of Theorem \ref{Xcomplex}, a surface $\Gamma$ in $\mathcal{X}(\Sigma)$ is a holomorphic curve if and only if 
$\kappa_{31}\w\kappa_{32}$ vanishes on $\Gamma$ (recalling that $\kappa_{31}=k_3\theta_1$ on $\Sigma$).  Thus,
the structure equations for a holomorphic curve $\Gamma$ in $\mathcal{X}(\Sigma)$ are 
given by \eq{pholoeqgen} with: $\bfu$ the immersion of $\Sigma$ in $\mathcal{S}^6$; $\bff$ a unitary frame for 
$T\mathcal{S}^6|_{\Sigma}$; $\theta=(\theta_1,\theta_2,0)$; and $\kappa$ satisfying $\kappa_{31}\w\kappa_{32}=0$.

\begin{remark}
One can use the formulae \eq{pholoframe} for a unitary frame for $T\mathcal{S}^6|_{\Sigma}$ to give a ``more explicit'' 
expression for $L_6(\bfu,\Sigma,\Gamma)$.  However, in doing so, one adapts frames so that 
$\bff_2$ and $\bff_3$ span $N_1\Sigma$ and $N_2\Sigma$.  This %adaptation 
 breaks the symmetry of the problem and %thus 
makes it difficult to see the holomorphic condition on $\Gamma$ in the formula 
one derives for the Lagrangian.  We therefore refrain from giving this expression for $L_6(\bfu,\Sigma,\Gamma)$.
\end{remark}

We now prove the main result in this paper.

\begin{thm}\label{mainthm}
A connected Lagrangian $L$ in $\mathcal{S}^6$ is ruled if and only if there exists an open dense subset 
$L^*$ of $L$ such that, for all $x\in L^*$, there exists an open set $U\ni x$ such that:
\begin{itemize}
\item[\emph{(a)}] $U\cap L^*=L_3(\bfu_1,\Sigma_1)$ for some holomorphic curve $\bfu_1:\Sigma_1\rightarrow\C\P^2$ 
as in Example \ref{s4ex3} and we may take $U=L^*=L$; or
\item[\emph{(b)}] $U\cap L^*=L_4(\bfu_2,\Sigma_2)$ for some %linearly full 
 non-totally geodesic pseudoholomorphic curve $\bfu_2:\Sigma_2\rightarrow\mathcal{S}^6$ as in Example \ref{s4ex4}; or
%\item[\emph{(c)}] $U\cap L^*=L_6(\bfu_4,\Sigma_4)$ for some linearly full null-torsion pseudoholomorphic 
%curve $\bfu_4:\Sigma_4\rightarrow\mathcal{S}^6$ as in Example \ref{s7ex2}; or
\item[\emph{(c)}] $U\cap L^*=L_6(\bfu_3,\Sigma_3,\Gamma)$ for some non-totally geodesic pseudoholomorphic 
curve $\bfu_3:\Sigma_3\rightarrow\mathcal{S}^6$, and some holomorphic curve $\Gamma$ in $\mathcal{X}(\Sigma_3)$ as in Example \ref{s7ex1}.
\end{itemize}
Moreover, $L$ is not linearly full if and only if $L$ is of type (a) 
and the general family of ruled Lagrangians in $\mathcal{S}^6$ is locally classified by Example \ref{s7ex1}. 
%and the ruled Lagrangians whose fundamental cubic does not admit a pointwise symmetry are locally classified by 
%Examples \ref{s7ex1} and \ref{s7ex2}.
\end{thm}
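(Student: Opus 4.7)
The plan is to split the argument according to whether the fundamental cubic $C_L$ admits a pointwise symmetry on an open dense subset of $L$. The symmetric cases are resolved by the classification assembled in $\S$\ref{s6} (summarised in Table \ref{summary}), while the generic case is handled by identifying, out of the ruled data, a pseudoholomorphic curve $\Sigma$ and a holomorphic curve $\Gamma$ in $\mathcal{X}(\Sigma)$, so that Theorem \ref{Xcomplex} applies.

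\textbf{Symmetric case.} Suppose $C_L$ has a non-trivial pointwise stabilizer on an open dense subset. Since $L$ is ruled and not merely quasi-ruled, Lemma \ref{rcubic} gives $r=0$ in the normal form $C(r,s,a,b)$, and the tube radius appearing in Table \ref{summary} must be $\frac{\pi}{2}$. This excludes the $\SO(2)$, $\AAA_4$, and $\Z_3$ stabilizers, whose examples are quasi-ruled with radius $\sin^{-1}(\tfrac{2}{3})$. What remains: the $\SO(3)$ case (Proposition \ref{s4prop1}) and the $\Z_2$ case both yield simple Lagrangians, which are realised as Hopf lifts over a totally geodesic $\C\P^1$ in the sense of Example \ref{s4ex3} and so fall under (a). The $\SSS_3$ case (Proposition \ref{s4prop4}) splits into $L_3(\bfu_1,\Sigma_1)$ when $L$ is not linearly full (type (a)), and $L_4(\bfu_2,\Sigma_2)$ in $N_2\Sigma_2$ when $L$ is linearly full (type (b)).

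\textbf{Generic case.} Suppose $C_L$ has trivial stabilizer on an open dense subset $L^*$. Over $L^*$ we may work with the adapted frame of the preamble to $\S$\ref{s7}, in which $C_L=C(0,s,a,b)$ with $s(a^2+b^2)\ne 0$ and the structure equations are \eqref{ruledeqs1}--\eqref{ruledeqs3}. The ruling $2$-plane at $p\in L$ is $\Pi_p=\langle \bfx(p),\bfe_1(p)\rangle\subseteq\Im\O$, containing the geodesic ruling circle. As $p$ varies, this defines a surface in $\Gr_+(2,\Im\O)\cong\mathcal{C}$, which by the CR-holomorphicity proposition of $\S$\ref{s4} and the remark that CR-holomorphic curves in $\mathcal{C}$ are lifts of pseudoholomorphic curves, yields a pseudoholomorphic curve $\bfu_3:\Sigma_3\rightarrow\mathcal{S}^6$. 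Since $L$ is linearly full (otherwise $C_L$ would have an $\SSS_3$-symmetry by Proposition \ref{s4prop4}), $\Sigma_3$ is non-totally geodesic.

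\textbf{Matching the two descriptions.} Choose a unitary frame $\{\bff_1,\bff_2,\bff_3\}$ for $T\mathcal{S}^6|_{\Sigma_3}$ with $\bff_1$ spanning $T^{1,0}\Sigma_3$, such that $\bfx=\bff_3+\bar{\bff}_3$ along the ruling fibres of $L\to\Sigma_3$; this frame descends from $\mathcal{B}(\Sigma_3)$ to $\mathcal{Q}(\Sigma_3)$, and onwards to a real surface $\Gamma$ in $\mathcal{X}(\Sigma_3)$ by further quotienting by the $\U(1)_r$ action along the ruling. Using \eqref{pholoeq} and \eqref{ruledeqs1}--\eqref{ruledeqs3}, one checks that the Lagrangian condition $\bfx^*\omega\equiv 0$ pulls back to $\check{\Omega}|_\Gamma\equiv 0$ via \eqref{chOmega}, which by Theorem \ref{Xcomplex} is exactly the condition that $\Gamma$ be a holomorphic curve in $\mathcal{X}(\Sigma_3)$. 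Hence $U\cap L^* = L_6(\bfu_3,\Sigma_3,\Gamma)$ locally, proving (c). The dimension count closes the argument: the EDS \eqref{ruledeqs1}--\eqref{ruledeqs3} is involutive with last non-zero character $s_1=6$, while Example \ref{s7ex1} supplies exactly $4+2=6$ functions of one variable (four for $\bfu_3$ and two for $\Gamma$), so (c) is the general ruled family. The final assertions follow: $L_3(\bfu_1,\Sigma_1)$ is contained in a totally geodesic $\mathcal{S}^5$ and so is not linearly full, whereas tubes in (b) and (c) over linearly full pseudoholomorphic curves are linearly full in $\mathcal{S}^6$.

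\textbf{Main obstacle.} The main technical difficulty is the reconciliation of two frame adaptations: the Lagrangian frame $\{\bfe_1,\bfe_2,\bfe_3\}$ with $\bfe_1$ along the ruling used in \eqref{ruledeqs1}--\eqref{ruledeqs3}, and the pseudoholomorphic unitary frame $\{\bff_1,\bff_2,\bff_3\}$ of $\S$\ref{pholo} in which $\check\Omega=\kappa_{32}\w k_3\theta_1$ is diagonalised. Identifying $\Sigma_3$ requires isolating the precise combinations of $\{\bfe_i,J\bfe_j\}$ whose span is constant along each ruling circle and descends to $T\Sigma_3\oplus N_1\Sigma_3$; matching the two sets of structure equations then forces the vanishing of $\theta_3$ and the relation $\kappa_{31}\w\kappa_{32}=0$ that characterises holomorphic curves in $\mathcal{X}(\Sigma_3)$, but extracting these relations cleanly without spoiling the absence of symmetry in $C_L$ is the delicate step.
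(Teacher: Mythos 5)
Your high-level strategy is the same as the paper's: split on whether $C_L$ admits a pointwise symmetry, dispose of the symmetric case via the classification of $\S$\ref{s6}, and in the generic case produce a pseudoholomorphic curve $\Sigma_3$ and a holomorphic curve $\Gamma$ in $\mathcal{X}(\Sigma_3)$ so that Theorem \ref{Xcomplex} applies. The problem is that the generic case, which is the substance of the theorem, is not actually carried out. The paper's proof is an explicit construction: starting from the adapted ruled frame satisfying \eq{ruledeqs1}--\eq{ruledeqs3} it sets $\bfu=-J\bfe_1$, $\bff_1=\frac{1}{2}(-J\bfe_2+iJ\bfe_3)$, $\bff_2=\frac{1}{2}(\bfe_2+i\bfe_3)$, $\bff_3=\frac{1}{2}(\bfx+i\bfe_1)$, together with explicit 1-forms $\theta_1,\theta_2,\kappa_{ij}$ (the equations \eq{Xforms}), and then verifies from \eq{ruledeqs1}--\eq{ruledeqs3} that this data satisfies the $\GGG_2$ structure equations \eq{pholoeqgen} with $\theta_3=0$, that $\d\bfu$ has no $\omega_1$-component and $\bfu\times$ acts as a complex structure on $\bff$ (so $\bfu$ genuinely defines a pseudoholomorphic curve with unitary frame $\bff$), that $\kappa_{21}\neq0$ (non-totally geodesic), and that $\kappa_{31}\w\kappa_{32}=0$, which is precisely the holomorphicity of $\Gamma$. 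In your write-up this is exactly the step you defer: you ``choose a unitary frame such that $\bfx=\bff_3+\bar{\bff}_3$'' without specifying $\bfu,\bff_1,\bff_2$ in terms of the Lagrangian frame, you say ``one checks'' that the Lagrangian condition becomes $\check{\Omega}|_\Gamma\equiv0$, and your closing paragraph concedes that extracting $\theta_3=0$ and $\kappa_{31}\w\kappa_{32}=0$ is the delicate unresolved step. The appeal to the CR-holomorphic curve in $\mathcal{C}$ and Fox's result only tells you that the ruling is the lift of \emph{some} pseudoholomorphic curve; it does not identify $\Sigma_3$, nor locate the ruling plane $\langle\bfx,\bfe_1\rangle$ correctly relative to $T\Sigma_3\oplus N\Sigma_3$, nor give holomorphicity of $\Gamma$. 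Nor can the dimension count close this gap: equality of Cartan characters shows the two families have the same local generality, not that one exhausts the other.

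Two smaller points. In the symmetric case, the observation that the $\SO(2)$, $\AAA_4$ and $\Z_3$ examples are tubes of radius $\sin^{-1}(\frac{2}{3})$ does not by itself rule out their also carrying a geodesic 1-ruling; the cleaner route is that a 1-ruling forces $C_L=C(0,s,a,b)$ by Lemma \ref{rcubic}, which always has a null direction, whereas the $\SO(2)$ and $\Z_3$ cubics have none, with the $\AAA_4$ case handled via Proposition \ref{ruledsimple} and the remark following it. Finally, your justification of the last assertion (``tubes in (b) and (c) over linearly full pseudoholomorphic curves are linearly full'') is not quite right, since the underlying curves in (b) and (c) need not be linearly full (cf.\ the product examples); the correct argument is simply Proposition \ref{s4prop4}(i): a non-simple, not linearly full Lagrangian has $\SSS_3$-symmetric cubic and is $L_3(\bfu_1,\Sigma_1)$, hence of type (a), while every $L_3(\bfu_1,\Sigma_1)$ lies in a totally geodesic $\mathcal{S}^5$.
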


\begin{note}
Theorem \ref{mainthm} gives `\emph{Weierstrass formulae}' for ruled Lagrangians in $\mathcal{S}^6$ in the following sense: 
we can define them either using a holomorphic curve in $\C\P^2$ or, given a pseudoholomorphic curve in $\mathcal{S}^6$,
 using holomorphic data.  However, we should stress that the general pseudoholomorphic curve in $\mathcal{S}^6$ does \emph{not} 
admit a Weierstrass representation (though the null-torsion curves do by \cite[Theorem 4.7]{Bryant1}).  
Therefore, we really have a \emph{family} of Weierstrass representations for the 
general ruled Lagrangian, indexed by pseudoholomorphic curves in $\mathcal{S}^6$.  This is in stark contrast to 
ruled special Lagrangians in $\C^3$ which, it is believed, cannot be similarly described using Weierstrass formulae.  
\end{note}

%The proof of Theorem \ref{mainthm} is in the same spirit as the proofs of the classification results in 
%\cite{Deszcz}, \cite{Dillen2} and \cite{Vrancken2}, though significantly more algebraically involved.  The key idea is that, once one gives a formula for the pseudoholomorphic curve associated to the ruled Lagrangian, 
%it boils down to a calculation of the first and second normal spaces of the curve using the key formulae given in $\S$\ref{s3subs1}.

\begin{proof} 
If $L$ has fundamental cubic $C_L$ with pointwise symmetry then by Proposition \ref{s4prop4} and the observations at the 
start of this section it must locally be given by (a) or (b) depending on whether it is linearly full or not.  It also follows 
from Proposition \ref{s4prop4} that $L$ is of type (a) if and only if $L$ is not linearly full 
and the local description can be extended to a global one.

Therefore, suppose $C_L$ does not have a pointwise symmetry.  Then, on some open dense subset $L^*$, the structure 
equations are given by \eq{ruledeqs1}-\eq{ruledeqs3}.  Recall the notation in $\S$\ref{pholo} and define:
\begin{subequations}\label{Xforms}
\begin{gather}
\bfu=-J\bfe_1;\\
\bff_1=\frac{1}{2}( -J\bfe_2+iJ\bfe_3);\qquad \bff_2=\frac{1}{2}(\bfe_2+i\bfe_3);\qquad \bff_3=\frac{1}{2}(\bfx+i\bfe_1);\\
\theta_1=\frac{1}{2}\big((t_0+2)+it_1\big)(\omega_2+i\omega_3);\qquad 
\theta_2=\frac{1}{2}\left(\frac{is}{4}\right)(\omega_2+i\omega_3);\\
%\kappa_{11}=i\big((1-t_0)\omega_1-t_3\omega_2+t_2\omega_3\big);\\
\kappa_{22}=i\big((1+t_0)\omega_1+t_3\omega_2-t_2\omega_3\big);\qquad\kappa_{33}=-2i\omega_1;\label{kappa1}\\
\kappa_{31}=-\theta_2;\qquad\kappa_{21}=\frac{s}{4}\,\omega_1+\frac{a-ib}{4} (\omega_2+i\omega_3);\label{kappa2}\\
\kappa_{32}=\frac{1}{2}\big((t_0-2)+it_1\big)(\omega_2+i\omega_3).\label{kappa3}
\end{gather}
\end{subequations}
Let $\theta=(\theta_1,\theta_2,0)^{\rm T}$ and let $\kappa$ be the $3\times 3$ traceless skew-Hermitian matrix of 1-forms 
defined using \eq{kappa1}-\eq{kappa3}.  Using \eq{ruledeqs1}-\eq{ruledeqs3}, 
%we find that the forms and functions defined by \eq{Xforms} 
 we find that $\bfu$, $\bff=(\bff_1\;\bff_2\;\bff_3)$, $\theta$ and $\kappa$ satisfy the $\GGG_2$ 
structure equations \eq{pholoeqgen}. %for $\theta_3=0$.  

We observe that $\d\bfu$ is independent of $\omega_1$ and that left-multiplication by $\bfu$ acts as a complex structure map
on $\bff=(\bff_1\;\bff_2\;\bff_3)$ (this is easily verified by taking explicit imaginary octonionic representatives).  Thus, 
$\bfu$ defines a pseudoholomorphic curve $\bfu:\Sigma\rightarrow\mathcal{S}^6$ and $\bff$ is a unitary frame 
for $\bfu^*(T\mathcal{S}^6)$.   
 %with unitary frame $\{\bff_1,\bff_2,\bff_3\}$ and unitary coframe $\{\theta_1,\theta_2,\theta_3\}$.
  Moreover, $\kappa_{21}\neq 0$ since $s$ and $a^2+b^2$ are non-zero functions, so $\Sigma$ must be non-totally geodesic by 
the observations in $\S$\ref{pholo}.  Further, we see that $\kappa_{31}\w\kappa_{32}=0$.  %Recall Definition \ref{Xdfn} and 

By the observations after Example \ref{s7ex1}, the structure equations satisfied by $\bfu$, $\bff$, 
$\theta$ and $\kappa$ define a holomorphic curve $\Gamma$ in $\mathcal{X}(\Sigma)$ with respect to the complex structure 
$I$ given by Theorem \ref{Xcomplex}.  Since $\bfx=\bff_3+\bar{\bff}_3$,
we deduce from Theorem \ref{Xcomplex} that $L^*$ is locally of the form $L_6(\bfu,\Sigma,\Gamma)$ as in Example 
\ref{s7ex1}.
\end{proof}

\begin{remark}
The general ruled Lagrangian admits two types of deformations: deformations of the underlying pseudoholomorphic curve $\Sigma$, 
and deformations of the holomorphic curve in $\mathcal{X}(\Sigma)$.
\end{remark}

To conclude this section, we study some reductions of the system for ruled Lagrangians.

%An obvious question before we continue is: what happens when the pseudoholomorphic curve $\Sigma$ is totally geodesic or 
%null-torsion?  The totally geodesic case is trivial because the resulting Lagrangian must be invariant under the 
%$\SO(4)$ subgroup of $\GGG_2$ preserving $\Sigma=\mathcal{S}^2$ and so must be simple.  The null-torsion possibility is 
%more interesting.  We notice that a 
%family of ruled Lagrangians associated to null-torsion curves is given in \cite[Theorem 1]{Vrancken2} as they admit a 
%Killing vector field whose integral curves are geodesic circles.

\begin{ex}\textbf{(Ruled tubes in the first normal bundle)}\label{s7ex2}
Let $\bfu:\Sigma\rightarrow\mathcal{S}^6$ be a pseudoholomorphic curve with no totally geodesic points. 
Let $h_{\Sigma}$ be the second fundamental form of $\Sigma$ and let 
$\mathcal{U}(\Sigma)$ be its unit tangent bundle.  Define $\bfx:\mathcal{U}(\Sigma)\rightarrow\mathcal{S}^6$ by 
$$\bfx:\bft\mapsto \frac{h_{\Sigma}(\bft,\bft)}{\|h_{\Sigma}(\bft,\bft)\|}.$$
By \cite[Theorem 1]{Vrancken2}, $\bfx$ is a Lagrangian immersion in $\mathcal{S}^6$ if and only if $\Sigma$ is null-torsion.
%Let $L_6(\bfu,\Sigma)$ be the image of $\bfx$ for a non-totally geodesic null-torsion curve 
%$\bfu:\Sigma\rightarrow\mathcal{S}^6$, noting that we can extend the definition of $\bfx$ to the case of isolated totally
%geodesic points.
 From Proposition \ref{tubes} and \eq{pholoframe}, we recognise these examples as ruled Lagrangian tubes 
in $N_1\Sigma$ about null-torsion pseudoholomorphic curves $\Sigma$.  
\end{ex}

The structure equations \eq{ruledeqs1}-\eq{ruledeqs3} for $t_1=0$ define 
an involutive EDS with $s_1=2$.  By comparing these structure equations with the calculations leading to the 
proof of \cite[Theorem 2]{Vrancken2}, we deduce that the solutions to this reduced system are given by Example \ref{s7ex2}.

\begin{remark}
By \cite[Theorem 2]{Vrancken2}, Lagrangians that admit a Killing vector whose integral curves are geodesic circles 
are classified by Example \ref{s4ex3}, $L_4(\bfu,\Sigma)$ as in Example \ref{s4ex4} for a null-torsion 
pseudoholomorphic curve $\Sigma$, and Example \ref{s7ex2}.
\end{remark}
%\noindent In the proof of our main theorem, we show that the above examples complete the list of possible ruled Lagrangians which are 
%tubes about null-torsion pseudoholomorphic curves.  

\begin{ex}\textbf{(Products 2)}
The structure equations \eq{ruledeqs1}-\eq{ruledeqs3} with $t_0=0$ 
have $\d\omega_1=0$ and the reduced EDS is involutive with $s_1=2$.  Notice that $t_1$ must be non-zero by \eq{ruledeqs3}. 
 Thus, by Theorem \ref{mainthm}, the corresponding Lagrangians are locally \emph{products} $\mathcal{S}^1\times P$ for a
 surface $P$ in $\mathcal{S}^6$, and the Lagrangians are of the form $L_6(\bfu,\Sigma,\Gamma)$ 
as in Example \ref{s7ex1}.   For $L_6(\bfu,\Sigma,\Gamma)$ to be a product, the 
plane-bundle over $\Sigma$ defined by $\Gamma$ must be trivial.  
%One may calculate using the structure equations satisfied by \eq{Xforms} that this occurs 
 We see from \eq{Xforms} that $t_0=0$ if and only if the torsion $k_1$ of $\Sigma$ satisfies $|k_1|=1$. Thus, 
$\Sigma$ is linearly full in a totally geodesic $\mathcal{S}^5$ in $\mathcal{S}^6$ and is minimal Legendrian by observations in 
$\S$\ref{pholo}.  Thus we get the same Lagrangians as in Example \ref{prodsex1}.
\end{ex}

\section[The quasi-ruled Lagrangian family]{The quasi-ruled Lagrangian family}\label{s8}

Here we prove that the fundamental cubic of a quasi-ruled Lagrangian has a pointwise symmetry, so 
they have already been classified by examples in $\S$\ref{s6}. 

\begin{thm}\label{mainthm2}
A connected Lagrangian $L$ in $\mathcal{S}^6$ is quasi-ruled if and only if its fundamental cubic has a pointwise 
$\SO(2)$, $\AAA_4$ or $\Z_3$-symmetry.  Therefore, $L$ has a $\frac{2}{3}$-ruling and there exists 
an open dense subset $L^*$ of $L$ such that, for all $x\in L^*$, there exists an open set $U\ni x$ such that 
either $U\cap L^*=L_1$ as given in Example \ref{s4ex2}, or $U\cap L^*=L_5(\bfu,\Sigma)$ for some non-totally 
geodesic null-torsion 
pseudoholomorphic curve $\bfu:\Sigma\rightarrow\mathcal{S}^6$ as in Example \ref{s4ex5}.
%which is  
%a tube radius $\sin^{-1}(\frac{2}{3})$ about either a totally geodesic pseudoholomorphic 2-sphere or a null-torsion 
%pseudoholomorphic curve in the second normal bundle.  
%
%In other words, quasi-ruled Lagrangians in $\mathcal{S}^6$ are locally classified by Examples \ref{s4ex2}, \ref{s4ex1} and \ref{s4ex5}.
\end{thm}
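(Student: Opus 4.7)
The reverse implication is already in hand: in Section \ref{s6}, each of $L_1$ (Example \ref{s4ex2}), $L_2$ (Example \ref{s4ex1}) and $L_5(\bfu,\Sigma)$ (Example \ref{s4ex5}) was identified as a tube of radius $\sin^{-1}(\tfrac{2}{3})$, so each admits a $\tfrac{2}{3}$-ruling and is thus quasi-ruled. So the task is to establish the forward implication and then match it to Propositions \ref{s4prop2}, \ref{s4prop3} and \ref{s4prop6}.

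For the forward direction, suppose $L$ is quasi-ruled with ruling radius $\lambda\in(0,1)$. Lemma \ref{rcubic} provides an adapted frame in which $C_L=C(r,s,a,b)$ with $r=\tfrac{4}{\lambda}\sqrt{1-\lambda^2}$ a positive constant; the residual gauge is the $\SO(2)$ subgroup fixing $\bfe_1$, which further allows us to set $b=0$ once $s=0$. The plan is to mimic the setup of Section \ref{s7} for ruled Lagrangians, writing the connection as $\alpha_i=t_{ij}\omega_j$ and expanding the Codazzi equation \eq{Co} componentwise, but now with $\d r=0$ in place of the condition $r\equiv 0$. The $(1,1,1)$-component of \eq{Co} immediately enforces $H_{111k}=0$ and imposes algebraic relations on the $t_{ij}$; the $(1,1,2)$, $(1,1,3)$, $(1,2,2)$, $(1,3,3)$, $(2,2,2)$, $(2,2,3)$ components then determine $\d s$, $\d a$, $\d b$ and the remaining $t_{ij}$ in terms of $r,s,a,b$ and a handful of auxiliary functions; and the Gauss equation \eq{Ga} together with the integrability condition $\d(\d r)=0$ closes the system.

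The main obstacle, and the heart of the argument, is to show that this closed system forces $s=0$ and $r=2\sqrt{5}$ (equivalently $\lambda=\tfrac{2}{3}$). The expected mechanism is that the $\omega_1$-coefficients in the combined $(1,2,2)$ and $(1,3,3)$ Codazzi components, when set consistent with the relations dictated by \eq{Ga}, yield a quadratic in $r$ whose only admissible positive root is $2\sqrt{5}$, while a parallel combination extracts $s=0$. Once this is in place, the $\SO(2)$-gauge sets $b=0$, leaving $C_L=C(2\sqrt{5},0,a,0)$. By Table \ref{symtable} together with the remark following Proposition \ref{s2prop2a}, this cubic has $\SO(2)$-stabilizer if $a=0$, $\AAA_4$-stabilizer if $a=2\sqrt{10}=r\sqrt{2}$, and $\Z_3$-stabilizer otherwise. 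Passing to the open dense subset $L^*$ on which the stabilizer type is locally constant (real analyticity of $L$ ensures this), the concrete local description as $L_1$ or $L_5(\bfu,\Sigma)$ then follows directly from Propositions \ref{s4prop2}, \ref{s4prop3} and \ref{s4prop6}; the $\AAA_4$-case is the specialization of $L_5(\bfu,\Sigma)$ to the Bor\r{u}vka sphere $\mathcal{S}^2(\tfrac{1}{6})$, as noted in the remark after Example \ref{s4ex5}. The technical difficulty is in organizing this Codazzi--Gauss computation so that the quadratic in $r$ and the vanishing of $s$ surface cleanly without being obscured by the bookkeeping for the auxiliary functions $t_{ij}$.
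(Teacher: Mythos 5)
Your converse direction and the final reduction to Propositions \ref{s4prop2}, \ref{s4prop3} and \ref{s4prop6} are fine and match the paper, but the core of the forward implication is asserted rather than carried out, and the mechanism you predict is not how the computation actually resolves. There is first a logical problem: the parametrisation $\alpha_i=t_{ij}\omega_j$ is only available after the adapted frame bundle has been reduced to one with discrete stabilizer, i.e.\ it presupposes that $C_L$ has no continuous pointwise symmetry; it fails in exactly the $\SO(2)$ case you hope to land in, where $\alpha_1$ is the connection form of a genuine $\SO(2)$-bundle and is not semibasic. The paper therefore runs the whole argument as a proof by contradiction: assume $C_L$ has trivial stabilizer, note that the sub-cases $s=0$ and $a^2+b^2=0$ already give symmetric cubics by Table \ref{symtable}, and only then write $\alpha_i=t_{ij}\omega_j$ with $t_{21}=t_{31}=0$ and show the structure equations are inconsistent. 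Your plan instead aims to show the system ``forces $s=0$ and $r=2\sqrt{5}$''; but $s=0$ is incompatible with the hypotheses ($s\neq 0$, trivial stabilizer) needed to set the system up in the first place, so as written the argument is circular unless recast as a contradiction.

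Second, the predicted mechanism (a quadratic in $r$ from the $\omega_1$-coefficients of the $(1,2,2)$ and $(1,3,3)$ Codazzi components whose only admissible root is $2\sqrt{5}$, plus a parallel combination giving $s=0$) does not match what the equations yield. In the paper, the relations \eq{quasi} coming from \eq{secondLa}, \eq{secondLb} and \eq{secondLd} split the analysis into the cases $|s|=2r$ and $|s|\neq 2r$: the first branch forces $r=2$ (not $2\sqrt{5}$) and is killed only after a further chase through $\d a$ (either $a^2=16$ or $2t_{22}+1=t_{23}=0$, each contradictory), while the second gives $r^2+s^2=20$ and then $s=b=0$, contradicting $s\neq 0$. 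The value $r=2\sqrt{5}$, equivalently the $\frac{2}{3}$-ruling, is never extracted from the quasi-ruled system directly; it emerges only afterwards, from the $\SO(2)$, $\AAA_4$ and $\Z_3$ analyses of $\S$\ref{s6}, once the contradiction shows that $C_L$ must have a pointwise symmetry and Lemma \ref{rcubic} with $r\neq 0$ together with Table \ref{summary} excludes the $\SO(3)$, $\SSS_3$ and $\Z_2$ types. So the missing ingredient is precisely the case analysis that constitutes the proof; what you have is a reasonable plan whose central step is both unproven and, in its stated form, not what the structure equations deliver.
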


\begin{proof}
Suppose that $L$ is a connected Lagrangian submanifold of $\mathcal{S}^6$ with a $\lambda$-ruling, for some $\lambda\in(0,1)$.  
By Lemma \ref{rcubic}, 
using the notation there, we can choose a coframe of $L$ such that its fundamental cubic is $C_L=C(r,s,a,b)$ for 
$r=\frac{4}{\lambda}\,\sqrt{1-\lambda^2}\neq 0$ and some functions $s,a,b$.  
% If $s=0$, we can choose $b=0$ by Lemma \ref{rcubic}, and $C(r,0,a,0)$ has pointwise $\SO(2)$-symmetry if $a=0$, 
%$\AAA_4$-symmetry if $a=r\sqrt{2}$ and $\Z_3$-symmetry otherwise by Table \ref{symtable}.  If $a=b=0$, then $C_L=C(r,s,0,0)$ 
  If $C_L$ has a pointwise symmetry we are done by Tables \ref{symtable} and \ref{summary}, so, for a contradiction, we assume otherwise. %so suppose that 
%$C_L$ 
%
%We want to discover whether there are quasi-ruled Lagrangian submanifolds for which $C_L$
% does \emph{not} have a pointwise symmetry.  

%Suppose $s=0$.  By Lemma \ref{rcubic} again we may choose $b=0$ also.  
If $s=0$, we can choose $b=0$ by Lemma \ref{rcubic}, but $C(r,0,a,0)$ has a pointwise symmetry  by Table \ref{symtable}.  %Hence, we can restrict to $s\neq0$.  
We also notice that $C(r,s,0,0)$ has a pointwise symmetry, so %we can restrict to 
$s\neq 0$ and $a^2+b^2\neq0$ 
on some open dense subset $L^*$ of $L$.  
 %In addition, if $r=0$, then $C_L=C(0,s,a,b)$ and $L$ is ruled by the work in $\S$\ref{s7}.  
%Therefore, we assume that $r$, $s$ and $a^2+b^2$ are non-zero on some open dense subset $L^*$ of $L$. 
 %to see if there is a more general family of quasi-ruled Lagrangian submanifolds.

As $C_L$ has a trivial stabilizer in $\SO(3)$ at each point, there exist functions $t_{ij}$ on $L^*$ 
such that $\alpha_i=t_{ij}\omega_j$.  
Since $L$ is quasi-ruled in the $\bfe_1$ direction, $t_{21}=t_{31}=0$% and $r$ is a constant function
.  From the structure equations \eq{secondLa}, \eq{secondLb} 
and \eq{secondLd} we find that:
\begin{subequations}\label{quasi}
\begin{gather}
s\left(2t_{11}+1\right)=-(2r+s)\left(2t_{22}+1\right)=(2r-s)\left(2t_{33}+1\right)\quad\text{and}\\%\quad
(2r+s)t_{23}=(2r-s)t_{32}.
\end{gather}
\end{subequations}
Therefore, we can split our discussion into two cases: $|s|=2r$ and $|s|\neq 2r$.

%We begin with the possibility that $|s|=2r$.  Suppose, without loss of generality, that $s=-2r$.  
Suppose $|s|=2r$ and further, without loss of generality, that $s=-2r$. Equations \eq{quasi} imply that
$t_{11}=t_{33}=-\frac{1}{2}$ and $t_{32}=0$.  We also find, since $s$ is constant, that 
\begin{equation}\label{quasi3}
8rt_{12}=-a\left(2t_{22}+1\right)-2bt_{23}\quad\text{and}\quad 8rt_{13}=-2at_{23}+b\left(2t_{22}+1\right).
\end{equation}
Putting this information in \eq{secondLc} leads to $r^2=4$, so $r=2$, and 
\begin{equation}\label{quasi4}
2t_{12}t_{23}-t_{13}(2t_{22}+1)-b=0.
\end{equation}
From \eq{quasi3}-\eq{quasi4}, we deduce that $b=0$ and that $t_{12}$ and $t_{13}$ are multiples, 
depending on $a$, of $2t_{22}+1$ and $t_{13}$ respectively. Further, we see that $a$ is necessarily constant and, since
$$\d a= 3\left(1-\textstyle\frac{1}{16}a^2\right)\left(2t_{23}\omega_2-(2t_{22}+1)\omega_3\right),$$
we must have that $a^2=16$ or $2t_{22}+1=t_{23}=0$.  The former case quickly leads to a contradiction from \eq{secondLd}, whereas the latter forces $a=0$, contradicting our assumption that $a^2+b^2\neq 0$.  

Hence we turn our attention to the possibility of $|s|\neq 2r$ on $L^*$.  By \eq{quasi}, 
there exist functions $t_0$ and $t_1$ on $L^*$ such that:
\begin{subequations}\label{quasi5}
\begin{gather*}
2t_{11}+1=4(4r^2-s^2)t_0;\quad 2t_{22}+1=-4s(2r-s)t_0;\quad  2t_{33}+1=4s(2r+s)t_0;\\
t_{23}=2s(2r-s)t_1;\quad\text{and}\quad t_{32}=2s(2r+s)t_1.
\end{gather*}
\end{subequations}
One quickly finds from \eq{secondLc} that %, as we assumed $r\neq 0$, 
$s$ satisfies $r^2+s^2=20$, so $s$ is constant.  
This then forces $t_1=0$, $t_{12}=-4rat_0$ and $t_{13}=4rbt_0$.  Putting this information back in the structure equations \eq{secondLc}-\eq{secondLd} leads to $s=b=0$.  This is again a contradiction and the theorem is proved.
%giving us a fundamental cubic $C_L$ with (at least) 
%$\Z_3$-stabilizer, contradicting our assumption that $C_L$ had no pointwise symmetry.
\end{proof}

We conclude with the analogue of Proposition \ref{ruledsimple}.

\begin{prop}
A connected quasi-ruled Lagrangian has a unique $\frac{2}{3}$-ruling.
%with two distinct $\frac{2}{3}$-rulings is, up to rigid motion, $L_2$ given in Example \ref{s4ex1}.
\end{prop}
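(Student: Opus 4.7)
The plan is to argue that at each point, the direction of a $\frac{2}{3}$-ruling is determined by the fundamental cubic.  Let $L$ be a connected quasi-ruled Lagrangian with a $\frac{2}{3}$-ruling in direction $\bfe_1$, and suppose for contradiction there is a second ruling with direction $\bfv\neq\pm\bfe_1$ at some point.  Every $\frac{2}{3}$-ruling direction $\bfu$ must satisfy the pointwise eigenvector condition $h_L(\bfu,\bfu)=\frac{r}{4}J\bfu$ with $r=2\sqrt{5}$, as extracted from Lemma \ref{rcubic}.  By Theorem \ref{mainthm2}, $C_L$ has pointwise symmetry of type $\SO(2)$, $\AAA_4$, or $\Z_3$, so in the frame adapted to the first ruling, $C_L=C(2\sqrt{5},s,a,b)$ with $s=b=0$ and $a=0$ ($\SO(2)$), $a=2\sqrt{10}$ ($\AAA_4$), or $a\neq 2\sqrt{10}$ ($\Z_3$).

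Writing $\bfv=v_1\bfe_1+v_2\bfe_2+v_3\bfe_3$ with $v_1^2+v_2^2+v_3^2=1$ in this frame, the eigenvector equation reduces to three polynomial equations in $(v_1,v_2,v_3)$.  The first component gives the factorisation $(3v_1+1)(v_1-1)=0$, so either $v_1=1$ (giving $\bfv=\bfe_1$, excluded by distinctness) or $v_1=-\frac{1}{3}$.  Substituting $v_1=-\frac{1}{3}$ into the other two components and eliminating $v_3^2$ via $v_2^2+v_3^2=\frac{8}{9}$, the resulting relations admit non-trivial unit solutions only when $a=\pm 2\sqrt{10}$, i.e.\ in the $\AAA_4$ case.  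Hence in the $\SO(2)$ and $\Z_3$ cases $\bfv=\pm\bfe_1$ is forced, contradicting the existence of a second ruling.

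The remaining $\AAA_4$ case is the main obstacle.  By Proposition \ref{s4prop3}, $L$ is congruent to the homogeneous Lagrangian $L_2$ of Example \ref{s4ex1}, and the pointwise equation admits three further unit solutions which, together with $\bfe_1$, form the vertices of a regular tetrahedron in $T_p L_2$ on which the $\AAA_4$-stabilizer acts transitively with isotropy $\Z_3$.  I expect to complete the argument using this homogeneous structure: the associated bundle of candidate ruling directions is the 4-fold cover $\SO(3)/\Z_3 \to \SO(3)/\AAA_4 \cong L_2$, whose total space is connected (since $\Z_3$ is not normal in $\AAA_4$) and hence admits no smooth section over all of $L_2$.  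A genuine smooth $\frac{2}{3}$-ruling of $L_2$ must therefore come from the intrinsic tube fibration $\mathcal{U}(\mathcal{S}^2(\frac{1}{6}))\to\mathcal{S}^2(\frac{1}{6})$ of Example \ref{s4ex5}, which is unique by construction, and no second distinct ruling is possible.
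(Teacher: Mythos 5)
You follow the paper's strategy: reduce to the pointwise condition that a $\frac{2}{3}$-ruling direction $\bfv$ satisfies $h_L(\bfv,\bfv)=\tfrac{r}{4}J\bfv$ with $r=2\sqrt{5}$ (Lemma \ref{rcubic}), use Theorem \ref{mainthm2} to put $C_L$ in the form $C(2\sqrt{5},0,a,0)$, and solve the resulting algebraic system. Your algebra is correct: the first equation factors as $(3v_1+1)(v_1-1)=0$, and for $v_1=-\frac{1}{3}$ nontrivial solutions exist precisely when $a=2\sqrt{10}$, in which case there are exactly four unit solutions forming a regular tetrahedron (for instance $(-\frac{1}{3},\frac{2\sqrt{2}}{3},0)$ does satisfy the condition when $a=2\sqrt{10}$). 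So your argument settles the $\SO(2)$ and $\Z_3$ cases, and it also shows that the paper's one-line claim that ``elementary algebra gives a unique vector in each case'' is not accurate in the $\AAA_4$ case: pointwise uniqueness genuinely fails there, so some further argument is required, exactly as you sensed.

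The genuine gap is your treatment of that $\AAA_4$ case, which you leave explicitly unfinished and whose sketch is internally inconsistent. If, as you argue, the bundle of candidate ruling lines over $L_2$ is the connected $4$-fold cover $\SO(3)/\Z_3\rightarrow\SO(3)/\AAA_4$ (connected simply because $\SO(3)$ is connected; normality of $\Z_3$ in $\AAA_4$ is irrelevant), then the correct conclusion is that $L_2$ admits \emph{no} global $\frac{2}{3}$-ruling at all, not that the tube fibration is its unique ruling. Indeed the tube construction of Example \ref{s4ex5} over the Bor\r{u}vka sphere is $\SO(3)$-equivariant, so the $\AAA_4$ isotropy of a point of $L_2$ preserves the family of tube circles while permuting the four tetrahedral lines transitively; hence through each point of $L_2$ there pass tube circles tangent to all four directions, the map $\mathcal{U}(\Sigma)\rightarrow L_2$ is a non-trivial covering onto its image, and the circle fibration lives on $\mathcal{U}(\Sigma)$, not on $L_2$. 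Your final sentence therefore contradicts the no-section claim you have just made. Moreover the proposition is not restricted to the complete orbit: a connected quasi-ruled Lagrangian whose cubic has pointwise $\AAA_4$-symmetry need only be an open piece (or an immersed cover) of $L_2$, e.g.\ the tube over a disc in $\mathcal{S}^2(\frac{1}{6})$, and over such a piece the covering obstruction can vanish, so one of the other three candidate line fields may well exist globally there; excluding a second fibration by complete circles of radius $\frac{2}{3}$ in that situation needs a further argument (for instance that the other circle families are not contained in the piece, or a monodromy/saturation argument), which your proposal does not supply. Until the $\AAA_4$ case is closed along such lines, the proof is incomplete.
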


\begin{proof}
By Theorem \ref{mainthm2}, a connected quasi-ruled Lagrangian $L$ has a $\frac{2}{3}$-ruling.  Moreover, the fundamental 
cubic $C_L$ has $\SO(2)$, $\AAA_4$ or $\Z_3$-symmetry.  Suppose a unit tangent vector $\bfe$ on $L$ is in the direction 
of a $\frac{2}{3}$-ruling. Then $h_L(\bfe,\bfe)=2\sqrt{5} J\bfe$, 
where $h_L$ is the second fundamental form of $L$.  
By the work in $\S$\ref{s6}, using the notation of Lemma \ref{rcubic}, $C_L=C(2\sqrt{5},0,a,0)$ for some non-negative function $a$, 
up to $\SO(3)$ transformation.  %Moreover, $a=0$ for $\SO(2)$-symmetry of $C_L$, $a=2\sqrt{10}$ for $\AAA_4$-symmetry, and otherwise $C_L$ has $\Z_3$-stabilizer.  
Elementary algebra then shows that there is a unique vector $\bfe$
in each case, namely $\bfe=\bfe_1$ in this frame. 
%: for $\AAA_4$, $\sqrt{3}\bfe=\bfe_1+\bfe_2-\bfe_3$; for $\SO(2)$ and $\Z_3$, $\bfe=\bfe_1$.
\end{proof}

%Overall, we can deduce the following.

%\begin{thm}
%A connected Lagrangian $L$ in $\mathcal{S}^6$ is quasi-ruled if and only if it has a $\frac{2}{3}$-ruling and there exists 
%an open dense subset $L^*$ of $L$ such that for all $x\in L^*$ there exists an open set $U\ni x$ which is  
%a tube radius $\sin^{-1}(\frac{2}{3})$ about either a totally geodesic pseudoholomorphic 2-sphere or a null-torsion 
%pseudoholomorphic curve in the second normal bundle.  

%In other words, quasi-ruled Lagrangians in $\mathcal{S}^6$ are locally classified by Examples \ref{s4ex2}, \ref{s4ex1} and \ref{s4ex5}.
%\end{thm}

\begin{ack} 
The author thanks Robert Bryant and Daniel Fox for useful conversations, and MSRI for hospitality during the 
time of this research.  The author was supported by an NSF Mathematical Sciences Postdoctoral Research Fellowship.
\end{ack}

\medskip

\noindent \textsl{Address for correspondence:\\[4pt] Imperial College\\ London SW7 2AZ\\ England, U.K.\\
j.lotay@imperial.ac.uk}


\begin{thebibliography}{99}

\bibitem{Bolton} J. Bolton, L. Vrancken and L. M. Woodward, 
{\it On Almost Complex Curves in the Nearly K\"ahler 6-Sphere}, Q. J. Math. {\bf 45} (1994), 407--427.

\bibitem{Bryant1} R. L. Bryant, {\it Submanifolds and Special Structures on the Octonions}, J. Differential Geom. 
{\bf 17} (1982), 185--232.

\bibitem{Bryant2} R. L. Bryant, {\it Second Order Families of Special Lagrangian 3-folds in $\C^3$}, 
in Perspectives in Riemannian Geometry, CRM Proceedings and Lecture Notes, edited by Vestislav Apostolov, Andrew Dancer, Nigel 
Hitchin, and McKenzie Wang, {\bf 40} (2006), American Mathematical Society. 

\bibitem{Chen} B. Y. Chen, {\it Some Pinching and Classification Theorems for Minimal Submanifolds}, Arch. Math. (Basel) {\bf 60} (1993), 568--578.

\bibitem{Dajczer} M. Dajczer and L. Florit, {\it A Class of Austere Submanifolds}, 
Illinois J. Math. {\bf 45} (2001), 735--755. 

\bibitem{Deszcz} R. Deszcz, F. Dillen, L. Verstraelen and L. Vrancken, {\it Quasi-Einstein Totally Real Submanifolds of the 
Nearly K\"ahler 6-Sphere}, Tohoku Math. J. {\bf 51} (1999), 461--478.

\bibitem{Dillen} F. Dillen, L. Verstraelen and L. Vrancken, {\it Classification of Totally Real 3-Dimensional Submanifolds of
$\mathcal{S}^6(1)$ with $K\geq1/16$}, J. Math. Soc. Japan
\textbf{42} (1990), 565--584.

\bibitem{Dillen2} F. Dillen and L. Vrancken, {\it Totally Real Submanifolds in $\mathcal{S}^6(1)$ Satisfying Chen's Equality}, 
Trans. Amer. Math. Soc. {\bf 348} (1996), 1633--1646.

\bibitem{Ejiri} N. Ejiri, {\it Totally Real Submanifolds in a 6-Sphere}, Proc. Amer. Math. Soc. {\bf 83} (1981), 759--763.

\bibitem{Ejiri2} N. Ejiri, {\it Equivariant Minimal Immersions of $\mathcal{S}^2$ in $\mathcal{S}^{2m}$}, Trans. Amer. Math. Soc. {\bf 297} (1986), 105--124.

\bibitem{Fox} D. Fox, {\it Second Order Families of Coassociative 4-folds}, Doctoral Dissertation, Duke University, 2005.

\bibitem{Fox2} D. Fox, {\it Coassociative Cones that are Ruled by 2-Planes}, Asian J. Math. {\bf 11} (2007), 535--554.

%\bibitem{Fox3} D. Fox, {\it Cayley Cones Ruled by 2-Planes: Desingularization and Implications of the Twistor Fibration}, 
%preprint, arXiv:0709.3115, 2007.

\bibitem{Harvey} R. Harvey, {\it Spinors and Calibrations}, Perspectives in Mathematics {\bf 9},  
Academic Press, Inc., Boston, Massachusetts, 1990.

\bibitem{HarLaw} R. Harvey and H. B. Lawson, {\it Calibrated Geometries},
Acta Math. {\bf 148} (1982), 47--152.

\bibitem{Ikawa} O. Ikawa, T. Sakai and H. Tasaki, {\it Weakly Reflective Submanifolds and Austere Submanifolds}, 
J. Math. Soc. Japan {\bf 61} (2009), 437--481.

\bibitem{Ionel} M. Ionel, {\it Second Order Families of Special Lagrangian Submanifolds in $\C^4$}, J. Differential Geom. {\bf 65} (2003), 211--272. 

\bibitem{JoyceR} D. D. Joyce, {\it Ruled Special Lagrangian 3-folds in $\C^3$},
Proc. London Math Soc. {\bf 85} (2002), 233--256.

\bibitem{Joyce} D. D. Joyce, {\it Riemannian Holonomy Groups and Calibrated Geometry}, 
Oxford Graduate Texts in Mathematics {\bf 12}, OUP, Oxford, 2007. 

\bibitem{LotayR} J. D. Lotay, {\it Constructing Associative 3-folds by Evolution
Equations}, Comm. Anal. Geom. {\bf 13} (2005), 999--1037.

\bibitem{Lotay2R} J. D. Lotay, {\it 2-Ruled Calibrated 4-folds in $\R^7$ and $\R^8$}, J. London Math. Soc. {\bf 74} (2006), 219--243.

\bibitem{Lotay} J. D. Lotay, {\it Calibrated Submanifolds of $\R^7$ and $\R^8$ with Symmetries}, Q. J. Math. {\bf 58}
 (2007), 53--70.

\bibitem{Mashimo} K. Mashimo, {\it Homogeneous Totally Real Submanifolds of $\mathcal{S}^6$}, Tsukuba J. Math. {\bf 9} (1985),
185--202.

\bibitem{Sekigawa} K. Sekigawa, {\it Almost Complex Submanifolds of a 6-dimensional Sphere}, Kodai Math. J. {\bf 6} 
(1983), 174--185.

\bibitem{Vrancken3} L. Vrancken, {\it Locally Symmetric Submanifolds of the Nearly K\"ahler $\mathcal{S}^6$}, Algebras Groups Geom. {\bf 5} (1988), 369--394.

\bibitem{Vrancken2} L. Vrancken, {\it Killing Vector Fields and Lagrangian Submanifolds of the Nearly K\"ahler $\mathcal{S}^6$}, 
J. Math. Pures Appl. {\bf 77} (1998), 631--645.

\bibitem{Vrancken} L. Vrancken, {\it Special Lagrangian Submanifolds of the Nearly K\"ahler 6-Sphere}, Glasgow Math. J. {\bf 45} (2003),
415--426.

\end{thebibliography}
\end{document}